\newtheorem{theorem}{Theorem}[section]
\newtheorem{proposition}[theorem]{Proposition}
\newtheorem{notation}[theorem]{Notation}
\newtheorem{construction}[theorem]{Construction}
\newtheorem{lemma}[theorem]{Lemma}
\newtheorem{example}[theorem]{Example}
\newtheorem{definition}[theorem]{Definition}
\newtheorem{remark}[theorem]{Remark}
\newtheorem{convention}[theorem]{Convention}
\newtheorem*{lma}{Lemma}
\newtheorem*{prop}{Proposition}
\newtheorem*{nota}{Notation}
\def\mathbi#1{\textbf{\em #1}}
\title{The Bridge Lemmas between Equivalent Fell Bundles and its Applications}
\author{Weijiao He}
\date{}
\begin{document}

\maketitle

\begin{abstract}
In this paper, we prove that the induced representation theories of two equivalent Fell bundles are essentially identical; and we apply our results to carry the induced representation theory and imprimitivity theorems of saturated Fell bundles to arbitrary Fell bundles.   
\end{abstract}

\section*{Introduction}
A Fell bundle over a locally compact group $G$ is a continuous bundle $\mathscr{B} \to G$ of
Banach spaces $(B_t)_{t \in G}$ together with continuous multiplications $B_s \times B_t \to B_{st}, (a,b) \mapsto ab$ 
, and involutions $B_s \to B_{s^{-1}}$, satisfying properties similar
to those valid for a $C^{\ast}$-algebra, like the positivity $a^{\ast}a \geq 0$ and the $C^{\ast}$-axiom
$\|a^{\ast}a\|=\|a\|^2$. Fell bundles generalize partial actions of groups. We refer the reader to \cite[\S VIII]{MR936629} for details.

The notion of equivalent Fell bundles, which is a generalization of Morita equivalence of $C^{\ast}$-algebras, is originally presented by Muhly and Williams in \cite{MR2446021}, Abadie in \cite{MR1957674} and Abadie and P\'{e}rez in \cite{MR2529875} with varied forms, and is systematically studied by Abadie and Ferraro in \cite{MR3959060}, and Abadie, Buss and Ferraro in \cite{vjnj}. 

In \cite{MR3959060} and \cite{vjnj}, one of the important results is that an arbitrary Fell bundle is equivalent to a saturated Fell bundle. Based on this result, as in the context of Morita-equivalent $C^{\ast}$-algebras, we shall be very interested in to investigate the representation-theoretic equivalence between equivalent Fell bundles; and by the aid of this representation-theoretic equivalence we may try to generalize as many results on the representation theory of saturated Fell bundles as possible to the context of arbitrary Fell bundles.

In the present paper we deal with the induced representation theories of equivalent Fell bundles. 

We  roughly sketch the problem we deal with as following. Before preceding, we refer the reader to \cite{vjnj} or \ref{zdjfdcdedcz} to see the definition of equivalence between Fell bundles. Given two equivalent Fell bundles $\mathscr{B}^{(1)}$ and $\mathscr{B}^{(2)}$ over a locally compact group $G$ implemented by a $\mathscr{B}^{(1)}$-$\mathscr{B}^{(2)}$ equivalence bundle $\mathcal{L}$, by \cite{MR3959060} and \cite{vjnj}, $\mathscr{L}(\mathcal{L})$, which is consisting of the continuous cross-sections of $\mathcal{L}$ vanishing outside compact subsets, is a $\mathscr{L}(\mathscr{B}^{(1)})$-$\mathscr{L}(\mathscr{B}^{(2)})$ imprimitivity bimodule (see Definition \ref{sdkjlzdklczoe}); and the full $C^{\ast}$-algebras $C^{\ast}(\mathscr{B}^{(1)})$ and $C^{\ast}(\mathscr{B}^{(2)})$ are pre-Morita equivalent implemented by $\mathscr{L}(\mathcal{L})$. Now let $H$ be a closed subgroup of $G$, and $\mathscr{B}_H^{(1)}$, $\mathscr{B}_H^{(2)}$ and $\mathcal{L}_H$  the restricted bundles; with the  bimodule structure inherited from $\mathscr{L}(\mathcal{L})$, $\mathscr{L}(\mathcal{L}_H)$ is a $\mathscr{L}(\mathscr{B}^{(1)})$-$\mathscr{L}(\mathscr{B}^{(2)})$ imprimitivity bimodule. A natural question is the
following: for a given $\ast$-representation $S$ of $\mathscr{B}_H^{(1)}$, (a) is it possible to induce $S$ to a $\ast$-representation of $\mathscr{B}_H^{(2)}$ (which we denote by $T$) via $\mathscr{L}(\mathcal{L}_H)$? (b) Is ${\rm{Ind}}_{\mathscr{B}^{(2)}_{H} \uparrow \mathscr{B}^{(2)}}(T)$ corresponding to ${\rm{Ind}}_{\mathscr{B}^{(1)}_{H} \uparrow \mathscr{B}^{(1)}}(S)$ via the pre-Morita equivalence implemented by $\mathscr{L}(\mathcal{L})$? If we have affirmative answers, the induced representation theory of $\mathscr{B}^{(1)}$ can be ``carried'' into $\mathscr{B}^{(2)}$ by the equivalence between $\mathscr{B}^{(1)}$ and $\mathscr{B}^{(2)}$; so we will know the induced representation theory of one of them if we know it of the other one.  




The structure of this paper is as follows.

In Section 1, we review the basic knowledges used in this paper.

In Section 2, we prove the three Bridge Lemmas which give affirmative answers to the questions proposed in the preceding paragraph from different perspectives.   

In Section 3, we give the applications of the Bridge Lemmas.  Let $\mathscr{B}$ be a Fell bundle over a locally compact group $G$, and let $H$ a closed subgroup of $G$; our first application is to prove that any $\ast$-representation of a restricted bundle is $\mathscr{B}$-positive, which is proved by Ferraro in \cite{ccc} by a different and more self-contained way. The second application is to investigate the relationship between the two important imprimitivity theorems on Fell bundles; and Theorem \ref{ewridskjodsklcxwqweqwe} gives a complete answer.
\section{Preliminaries}
 \begin{nota}
\rm Throughout this paper, $A$ and $B$ will denote $\ast$-algebras; $G$ is a  locally compact group; and $H$ is a closed subgroup $G$. $\Delta$ and $\delta$ are the Haar measures of $G$ and $H$. We choose once a fixed rho-function $\rho$. We use the symbol $\mathscr{B}$ to denote a Fell bundle over $G$; and $\mathscr{B}_H$ is the restricted bundle. We use the symbol $\mathcal{O}(X)$ to denote the space of the bounded operators on the Hilbert space $X$.
 \end{nota}
 The objective of this section is to give a brief review of the basic knowledge about induced representations.

\subsection{Induced Representation and Morita Equivalence: Abstract Version}

The materials of this part are based on \cite{MR936629} and \cite{MR1634408}.










\begin{definition}
\rm A $right$ $B$-$rigged$ $left$ $A$-$module$ (resp. $left$ $A$-$rigged$ $right$ $B$-$module$) is a linear space $L$ which is both a
left $A$-module and a right $B$-module, together with a map $[\ , \ ]$ from $L \times L$ into $B$ (resp. $A$) such that

$(i)$ $[s, t ]$ is linear in $t$ (resp. $s$) and conjugate-linear in $s$ (resp. $t$) $(s, t \in L)$;

$(ii)$ $[ t , s]= [s, t]^{\ast}$ $(s, t \in L)$;

$(iii)$ $[s, tb ]= [s, r ]b$ (resp. $[bs, t]= b[s, r ]$) for all $s, t$ in $L$ and $b$ in $B$ (resp. $A$);

$(iv)$ $[as,t]=[s,a^{\ast}t]$ (resp. $[sa,t]=[s, ta^{\ast}]$) for all $a$ in $A$ (resp. $B$) and $s, t \in L$.

\end{definition}

\begin{construction}\label{eriodoadoiascerer}
\rm  ($\mathbf{Operator \ inner \ product }$) Let $L$ be a linear space. A map $P: L \times L \to \mathcal{O}(X)$ is called $an \ operator \ inner \ product$ on $L$ if: (i) $V_{s,t}$ is linear in $s$ and conjugate linear in $t$; (ii) $P$ is $positive$ in the sense that
\begin{equation*}
\sum_{i,j=1}^n (V_{t_i,t_j}\xi_i,\xi_j) \geq 0
\end{equation*}
for any positive integer $n$, any $t_1,...t_n$ in $L$, and any $\xi_1,...\xi_n$ in $X$. In this case, we define an inner product $(\ , \ ): (L \otimes X) \times (L \otimes X) \to \mathbb{C}$ by
\begin{equation*}
(r  {\otimes} \xi, t  {\otimes} \eta)=(S_{[t,r]}\xi,\eta) \ \ \ \ (r, s \in L; \ \xi, \ \eta \in X). 
\end{equation*}
\begin{nota}
\rm We denote the Hilbert space resulting from the quotient and completing of $L \otimes X$ with respect to $(\ , \ )$ by $\mathscr{X}(P)$; and we denote the image of $r \otimes \xi$ in $\mathscr{X}(P)$ by $r \widetilde{\otimes} \xi$  under the quotient map. 
\end{nota}
\end{construction}
\begin{construction}\label{serijcdkjdiucjifdipier}
\rm ($\mathbf{Abstract \ Induced \ Representations }$) Let $\langle L, [\ , \ ]\rangle$ be a right $B$-rigged left $A$-module. Let $S$ be a non-degenerate $\ast$-representation of $B$ acting on a Hilbert space $X$. We define $P_S: L \times L \to \mathcal{O}(X)$ by 
\begin{equation*}
P_S(r,s)=S_{[s,r]} \ \ \ \ (r, s \in L).
\end{equation*}
We say that $S$ or $P_S$ is $\langle L, [\ , \ ]\rangle$-$\mathbi{positive}$ if $P_S$ is an operator inner product on $L$; and $S$ or $P_S$ is $A$-$\mathbi{bounded}$ if the sense that the unique linear map $T_a$ defined on $L \otimes X(S)$ satisfying
\begin{equation*}
r \otimes \xi \mapsto (a \cdot r) \otimes \xi \ \ \ \ (r \in L,  \xi \in X(S))
\end{equation*}
can be extended to a bounded operator  on $\mathscr{X}(P_S)$ which we still denote by $T_a$ for each $a \in A$. If $S$ or $P_S$ is $\langle L, [\ , \ ]\rangle$-positive and bounded, then $a \mapsto T_a$ is a $\ast$-representation of $A$. We say that $S$ is $\mathbi{induceble}$ $\mathbi{via}$ $\langle L, [\ , \ ]\rangle$ and $T$ is $\mathbi{induced  from}$ $S$ $\mathbi{via}$ $\langle L, [\ , \ ]\rangle$; and we denote $T={\rm{Ind}}(S; \langle L, [\ , \ ]\rangle)$.
\end{construction}
\begin{notation}
\rm Sometimes we denote $L \otimes_S X(S)$ instead of $\mathfrak{X}(P_S)$.
\end{notation}

\begin{example}\label{eijaioadocareadas}
\rm If $p: B \to A$ is a conditional expectation, then we can define a map $[ \ , \ ]^p: A \times A \to B$ by 
\begin{equation*}
[a,b]^p=p(a^{\ast}b) \ \ \ \ (a, b \in B);
\end{equation*}
and it is readily seen that $\langle A, [\ , \ ]^p \rangle$ is a right $B$-rigged left $A$-module. If a $\ast$-representation $S$ of $B$ is inducible to $A$ via $\langle A, [\ , \ ]^p \rangle$, we also say that $S$ $\mathbi{is  inducible}$ $\mathbi{via}$ $p$. 
\end{example}

\begin{definition}\label{sdkjlzdklczoe}
\rm An $A$, $B$ $imprimitivity$
$bimodule$ is a system $\langle L,\  _A[ \ , \ ], [ \ , \ ]_B \rangle$, where

(i) $L$ is both a left $A$-module and a right $B$-module,

(ii) $\langle L, [\ , \ ]_B \rangle$ is a right $B$-rigged left $A$-module,

(iii) $\langle L, \ _A[\ ,\ ] \rangle$ is a left $A$-rigged right $B$-module,

(vi) the associative relation
\begin{equation*}
_A[r,s]t=r[s,t]_B  \ \ \ \ (r, s \in L)
\end{equation*}
holds. 

If $A$ and $B$ are $C^{\ast}$-algebras, we say that $A$ and $B$ are $Morita$ $equivalent$ if the following two additional conditions hold:

(vii) $\{_A[r,s]: r, s \in L\}$ is dense in $A$ and $\{[r,s]_B: r, s \in L\}$ is dense in $B$,

(viii) $L$ is complete with respect to the norm $\| \ \cdot \ \|_A$ defined by $\|r\|=\|_A[r,r]\|^{1/2}$, and is complete with respect to the norm $\| \ \cdot \ \|_B$ defined by $\|r\|=\|[r,r]_B\|^{1/2}$.

\end{definition}

\begin{definition}
\rm Let $A$ and $B$ be two $C^{\ast}$-algebras; and let $A_0 \subset A$ and $B_0 \subset B$ be dense $\ast$-subalgebras;  $\langle L,\  _{A_0}[ \ , \ ], [ \ , \ ]_{B_0} \rangle$ is an $A_0$, $B_0$ imprimitivity bimodule. We say that $A$ and $B$ are $pre$-$Morita$ $equivalent$ if the following two conditions hold:

(vii') The linear spans of $\{_A[r,s]: r, s \in L\}$ and $\{[r,s]_B: r, s \in L\}$ are dense in $A$ and $B$ respectively;

(viii') $[ar, ar]_{B_0} \leq \|a\|^2 [r,r]_{B_0}$;   $_{A_0}[rb, rb] \leq \|b\|^2 \ _{A_0}[r,r]$ ($r \in L; \ a \in A_0; \ b \in B_0$).
\end{definition}
 
 \begin{convention}\label{fdsickjldfsofxc}
\rm Thanks to \cite[Proposition 3.12]{MR1634408}, if $A$ and $B$ are pre-Morita equivalent implemented by an $A_0$, $B_0$ imprimitivity bimodule $\langle L,\  _{A_0}[ \ , \ ], [ \ , \ ]_{B_0} \rangle$, then we can complete $\langle L,\  _{A_0}[ \ , \ ], [ \ , \ ]_{B_0} \rangle$ such that $A$ and $B$ are Morita equivalent implemented by this completion. $\mathbf{For \ the \ sake \ of \ convenience, \ we \ will \ say \ that}$ $\mathbi{A}$ $\mathbf{and}$ $\mathbi{B}$ $\mathbf{are \ Morita \ equivalent \ implemented \ by}$ $\langle L,\  _{A_0}[ \ , \ ], [ \ , \ ]_{B_0} \rangle$.
\end{convention}

The following lemma is easy to be verified:
\begin{lemma}\label{esrijodzjodfiojcde}
Let $A$ and $B$ be two $\ast$-algebras;  $\langle L,\  _A[ \ , \ ], [ \ , \ ]_B \rangle$ is an $A$, $B$ imprimitivity
bimodule. Then the $C^{\ast}$-completion of $A$ and $B$ are Morita equivalent implemented by $\langle L,\  _{A }[ \ , \ ], [ \ , \ ]_{B } \rangle$ if either of the following conditions holds:

$(i)$ Any non-degenerate $\ast$-representation of $A$ is inducible to a non-zero $\ast$-$represe$-$ntation$ of $B$ via $L$, and any non-degenerate $\ast$-representation is induced from a $\ast$-representation of $A$;

$(ii)$ Any non-degenerate $\ast$-representation of $A$ is inducible to a non-zero $\ast$-$representation$ of $B$ via $L$, and any non-degenerate $\ast$-representation of $B$ is inducible to a non-zero $\ast$-representation of $A$ via $L$.
\end{lemma}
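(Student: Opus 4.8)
The plan is to verify, under either hypothesis, the two conditions $(\mathrm{vii}')$ and $(\mathrm{viii}')$ defining pre-Morita equivalence of the $C^{*}$-completions $\bar A$ and $\bar B$ (which the statement presupposes to exist), equipped with the dense $*$-subalgebras $A$, $B$ and the given bimodule structure, and then to quote Convention~\ref{fdsickjldfsofxc}. Throughout, $\|\cdot\|$ denotes the $C^{*}$-norm $a\mapsto\sup_{\pi}\|\pi(a)\|$, so every $*$-representation of $A$ or $B$ extends to $\bar A$ or $\bar B$ with $\|\pi(a)\|\le\|a\|$, and an inequality $c\le d$ in $\bar A$ holds as soon as $\pi(c)\le\pi(d)$ for each non-degenerate $*$-representation $\pi$, since an arbitrary $*$-representation splits as a non-degenerate summand plus a zero summand.

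I would first handle the conditions pertaining to $\bar A$ and $\langle L,{}_A[\ ,\ ]\rangle$, since both (i) and (ii) contain the clause that every non-degenerate $*$-representation $R$ of $A$ is inducible via $L$ to a non-zero $*$-representation of $B$. Inducibility says that $P_R$ is an operator inner product on $L$ — so its positivity with $n=1$ (Construction~\ref{eriodoadoiascerer}) gives $R_{{}_A[r,r]}\ge 0$, whence ${}_A[r,r]\ge 0$ in $\bar A$ — and that $R$ is $B$-bounded, so $b\mapsto T_b$ is a $*$-representation of $B$ on $\mathscr{X}(P_R)$ with $\|T_b\|\le\|b\|$. Since $\|r\widetilde{\otimes}\xi\|^{2}=(R_{{}_A[r,r]}\xi,\xi)$ and $\|T_b(r\widetilde{\otimes}\xi)\|^{2}=(R_{{}_A[rb,rb]}\xi,\xi)$, this yields $R_{{}_A[rb,rb]}\le\|b\|^{2}R_{{}_A[r,r]}$, hence ${}_A[rb,rb]\le\|b\|^{2}\,{}_A[r,r]$ in $\bar A$. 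Finally, if $\operatorname{span}\{{}_A[r,s]\}$ were not dense in $\bar A$, the quotient of $\bar A$ by the closed two-sided ideal it generates would carry a non-zero non-degenerate $*$-representation, restricting to a non-zero non-degenerate $*$-representation $R$ of $A$ that kills every ${}_A[r,s]$; then the defining sesquilinear form of $\mathscr{X}(P_R)$ vanishes identically, so $R$ cannot induce to a non-zero $*$-representation of $B$ — a contradiction. This settles positivity, boundedness, and density on the $\bar A$-side.

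It remains to treat the conditions pertaining to $\bar B$ and $\langle L,[\ ,\ ]_B\rangle$. Under (ii) the three arguments above apply verbatim after interchanging $A\leftrightarrow B$ and $\langle L,{}_A[\ ,\ ]\rangle\leftrightarrow\langle L,[\ ,\ ]_B\rangle$. Under (i) one instead uses that every non-degenerate $*$-representation $S$ of $B$ is \emph{induced} from a (necessarily non-degenerate, after discarding the degenerate part) $*$-representation $R$ of $A$ via $\langle L,{}_A[\ ,\ ]\rangle$, so $S$ acts on $\mathscr{X}(P_R)$ by $S_b(t\widetilde{\otimes}\xi)=(tb)\widetilde{\otimes}\xi$. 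The associativity relation ${}_A[r,s]t=r[s,t]_B$ of Definition~\ref{sdkjlzdklczoe} together with the rigged-module axioms then gives, for $v=\sum_i s_i\widetilde{\otimes}\xi_i$, the identities $(S_{[r,r]_B}v,v)=\|w\|^{2}$ and $(S_{[ar,ar]_B}v,v)=\|R_aw\|^{2}$ with $w=\sum_i R_{{}_A[r,s_i]}\xi_i$; as such $v$ are dense in $\mathscr{X}(P_R)$ and $\|R_a\|\le\|a\|$, one reads off $[r,r]_B\ge 0$ and $[ar,ar]_B\le\|a\|^{2}[r,r]_B$ in $\bar B$. For the remaining density, if a non-zero non-degenerate $*$-representation $S$ of $B$ killed every $[r,s]_B$ and were induced from $R$, then $S_{[r,s]_B}(t\widetilde{\otimes}\xi)=({}_A[t,r]\,s)\widetilde{\otimes}\xi=0$ for all $r,s,t,\xi$; combining this with the already-established density of $\operatorname{span}\{{}_A[t,r]\}$ in $\bar A$, the contractivity of $u\mapsto u\widetilde{\otimes}\xi$ for the norm $\|u\|=\|{}_A[u,u]\|^{1/2}$, and a routine approximate-identity argument in $\bar A$, one gets $u\widetilde{\otimes}\xi=0$ for all $u,\xi$, i.e.\ $\mathscr{X}(P_R)=0$ and $S=0$ — a contradiction. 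Thus $\operatorname{span}\{[r,s]_B\}$ is dense in $\bar B$, conditions $(\mathrm{vii}')$ and $(\mathrm{viii}')$ hold, and Convention~\ref{fdsickjldfsofxc} finishes the proof.

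The step I expect to be the real obstacle is this last, asymmetric case: hypothesis (i) supplies representations of $B$ that are \emph{induced from} $A$ rather than representations of $B$ that are inducible, so positivity, boundedness and — especially — density on the $\bar B$-side must be pried out by an explicit computation inside the concrete module $\mathscr{X}(P_R)$, where the associativity relation and the rigged-module axioms carry the load. By contrast, the $C^{*}$-algebraic bookkeeping (discarding degenerate summands, passing to quotients by closed ideals, extending the module actions to the completions) is routine once that computation is in hand.
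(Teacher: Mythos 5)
The paper offers no proof of this lemma to compare against --- it is introduced with ``The following lemma is easy to be verified'' and left entirely to the reader --- so your argument can only be judged on its own terms, and on those terms it is correct and supplies exactly what the appeal to Convention~\ref{fdsickjldfsofxc} (i.e.\ \cite[Proposition 3.12]{MR1634408}) requires: positivity of both inner products in the completions, the bounds $(\mathrm{viii}')$, and the density $(\mathrm{vii}')$. Your identification of the asymmetric case --- hypothesis (i) gives representations of $B$ that are \emph{induced from} $A$ rather than inducible --- as the only non-routine step is accurate, and your treatment of it is sound. The one place I would ask you to write out a line you currently only assert is the identity $(S_{[r,r]_B}v,v)=\|w\|^{2}$ with $w=\sum_i R_{{}_A[r,s_i]}\xi_i$: unwinding it with the paper's conventions, associativity and the module axioms give ${}_A\bigl[s_i[r,r]_B,\,s_j\bigr]={}_A[s_i,r]\,{}_A[r,s_j]=c_i c_j^{\ast}$ with $c_i={}_A[s_i,r]$, so the relevant operator matrix is $\bigl[R_{c_ic_j^{\ast}}\bigr]_{i,j}=CC^{\ast}$ for the column $C=(R_{c_i})_i$, whence the quadratic form equals $\|C^{\ast}\vec\xi\|^{2}=\|\sum_i R_{c_i^{\ast}}\xi_i\|^{2}$; without this factorization the index bookkeeping is genuinely easy to get wrong, and the same factorization with $c_i$ replaced by $c_ia^{\ast}$ delivers $(S_{[ar,ar]_B}v,v)=\|R_a w\|^{2}$. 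Two smaller points worth a sentence each in a written-up version: the span of $\{{}_A[r,s]\}$ is already a $\ast$-ideal of $A$ (so ``the ideal it generates'' is just its closure, and properness is equivalent to non-density), and a representation induced from a degenerate $R$ coincides with the one induced from its non-degenerate part because the zero summand contributes a null space to $\mathscr{X}(P_R)$ --- you gesture at both, correctly.
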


\subsection{Induced Representations and Morita Equivalence: Examples of Fell Bundles}

\begin{definition}
\rm (\cite[XI.8.6]{MR936629}) Let $S$ be a $\ast$-representation of $\mathscr{B}_H$. If $S_{p(f^{\ast} \ast f)} \geq 0$ for all $f \in \mathscr{L}(\mathscr{B})$, then we say that $S$ is $\mathscr{B}$-$positive$.
\end{definition}

\begin{construction}\label{seriodiofdsiofdsoierw}
\rm ($\mathbf{The \ concrete \ construction \ of}$ ${\rm{Ind}}_{\mathscr{B}_H \uparrow \mathscr{B}}(S)$) (\cite[\S XI.9]{MR936629}) Let $S$ be a non-degenerate $\ast$-representa-tion of $\mathscr{B}_H$. For each $\alpha \in G/H$ we denote the algebraic direct sum $\sum_{x \in \alpha}^{\oplus}(B_x \otimes X(S))$ of the algebraic tensor products $B_x \otimes X(S)$ by $Z_{\alpha}$. We introduce into $Z_{\alpha}$ the conjugate-bilinear form $( \ ,\ )_{\alpha}$ by
\begin{equation*}
(b \otimes \xi, c \otimes \eta)_{\alpha}=(\rho(x)\rho(y))^{-1/2}(S_{c^{\ast}b}\xi, \eta)
\end{equation*}
($x, \ y \in \alpha; \ b \in B_x; \ c \in B_y; \ \xi, \ \eta \in X(S)$). $\mathbf{One \ can \ prove \ that}$ $( \ , \ )_{\alpha}$ $\mathbf{is \ positive}$ $\mathbf{for \ all}$ $\alpha$ $\mathbf{if \ and \ only \ if}$ $\mathbi{S}$ $\mathbf{is}$ $\mathscr{B}$-$\mathbf{positive}$. Thus if $S$ is $\mathscr{B}$-positive we can form a Hilbert space $Y_{\alpha}$ by factoring out from $Z_{\alpha}$ the null space of $( \ , \ )_{\alpha}$ and completing, and a Hilbert bundle $\mathscr{Y}$ over $G/H$ with the fibers $Y_{\alpha}$; we denote the bundle space of $\mathscr{Y}$ by $Y$. Let $\kappa_{\alpha}: Z_{\alpha} \to Y_{\alpha}$ be the quotient map for each $\alpha \in G/H$. For each $c \in B_x$ there is a continuous map $\tau_{c}: Y \to Y$ defined by
\begin{equation*}
\tau_c(\kappa_{\alpha}(b \otimes \xi))=\kappa_{x \alpha}(cb \otimes \xi) \ \ \ \ (b \in B_{y}, y \in \alpha; \xi \in X(S));
\end{equation*}
and the following map
\begin{equation*}
(\alpha \mapsto f(\alpha)) \mapsto (f: \alpha \mapsto \tau_bf(x^{-1}\alpha)) \ \ \ \ (f \in \mathscr{L}_2(G/H;\rho^{\sharp}; \mathscr{Y}))
\end{equation*}
is a bounded operator on the Hilbert space $\mathscr{L}_2(G/H; \rho^{\sharp}; \mathscr{Y})$, which we denote by ${\rm{Ind}}_{\mathscr{B}_H \uparrow \mathscr{B}}(S)(b)$. One can prove that $b \mapsto {\rm{Ind}}_{\mathscr{B}_H \uparrow \mathscr{B}}(S)(b)$ is a non-degenerate $\ast$-representation of $\mathscr{B}$.

\end{construction}

\begin{construction}\label{rseidierdcdewer}
\rm (\cite[\S XI.9]{MR936629}) In order to construct ${\rm{Ind}}_{\mathscr{B}_H \uparrow \mathscr{B}}(S)$ in the abstract approach as defined in Construction \ref{serijcdkjdiucjifdipier},  we need to make $\mathscr{L}(\mathscr{B})$ a right $\mathscr{L}(\mathscr{B}_H)$-rigged left $\mathscr{L}(\mathscr{B})$-module. Let $p: \mathscr{L}(\mathscr{B}) \to \mathscr{L}(\mathscr{B}_H)$ be the conditional expectation given by
\begin{equation}\label{esiodkdoieodaxs}
p(f)(h)=f(h)(\Delta(h))^{1/2}(\delta(h))^{-1/2} \ \ \ \ (f \in \mathscr{L}(\mathscr{B}); h \in H).
\end{equation}
We define $[ \ , \ ]_{\mathscr{L}(\mathscr{B}_H)}=[ \ , \ ]^p$ (see Example \ref{eijaioadocareadas}); and we define the left action of  $\mathscr{L}(\mathscr{B})$ on  $\mathscr{L}(\mathscr{B})$ as the convolution of cross-sections; and the right action of  $\mathscr{L}(\mathscr{B}_H)$ on  $\mathscr{L}(\mathscr{B})$ by 
\begin{equation*}
(f\phi)(x)=\int_Hf(xh)\phi(h^{-1})\Delta(h)^{1/2}\delta(h)^{1/2}dvh 
\end{equation*}
$ (f \in \mathscr{L}(\mathscr{B}); \phi \in \mathscr{L}(\mathscr{B}_H); h \in H)$. By \cite[\S XI.8]{MR936629} $\langle \mathscr{L}(\mathscr{B}), [ \ , \ ]_{\mathscr{L}(\mathscr{B}_H)} \rangle$ is a right $\mathscr{L}(\mathscr{B}_H)$-rigged left $\mathscr{L}(\mathscr{B})$-module.

\begin{prop}
$($\cite[XI.9.26]{MR936629}$)$ If $S$ is a non-degenerate $\ast$-representation of $\mathscr{B}_H$, then $S$ is inducible to $\mathscr{B}$ via $\langle \mathscr{L}(\mathscr{B}), [ \ , \ ]_{\mathscr{L}(\mathscr{B}_H)} \rangle$; and we have
\begin{equation*}
{\rm{Ind}}_{\mathscr{B}_H \uparrow \mathscr{B}}(S) \simeq {\rm{Ind}}(S; \langle \mathscr{L}(\mathscr{B}), [ \ , \ ]_{\mathscr{L}(\mathscr{B}_H)} \rangle).
\end{equation*} 
\end{prop}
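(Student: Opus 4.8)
Since the concrete representation ${\rm{Ind}}_{\mathscr{B}_H \uparrow \mathscr{B}}(S)$ is only defined when $S$ is $\mathscr{B}$-positive — equivalently, by the remark in Construction~\ref{seriodiofdsiofdsoierw}, when every form $(\ ,\ )_{\alpha}$ on $Z_{\alpha}$ is positive — we work under that standing hypothesis (it is in fact automatic for non-degenerate $S$, but that is a separate matter taken up in Section~3). The plan is to produce a single linear map that simultaneously witnesses the $\langle \mathscr{L}(\mathscr{B}), [\ ,\ ]_{\mathscr{L}(\mathscr{B}_H)}\rangle$-positivity and the $\mathscr{L}(\mathscr{B})$-boundedness of $P_{S}$ and implements the claimed unitary equivalence. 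Explicitly, I would define
\[
\Phi\colon \mathscr{L}(\mathscr{B})\otimes X(S)\longrightarrow \mathscr{L}(G/H;\mathscr{Y}),\qquad \Phi(f\otimes\xi)(\alpha)=\kappa_{\alpha}\bigl(f|_{\alpha}\otimes\xi\bigr),
\]
where $\mathscr{L}(G/H;\mathscr{Y})$ denotes the continuous compactly supported cross-sections of $\mathscr{Y}$, and $f|_{\alpha}\otimes\xi$ is the element of $Y_{\alpha}$ obtained as the limit of the finitely supported elements of $Z_{\alpha}$ got by localizing the restriction $f|_{\alpha}$ through a partition of unity on the coset $\alpha$ (the $\rho$- and modular-function weights are built into this step). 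One first checks that $\Phi(f\otimes\xi)$ really is a continuous compactly supported section and that $\mathscr{L}(G/H;\mathscr{Y})$ is dense in $\mathscr{L}_{2}(G/H;\rho^{\sharp};\mathscr{Y})$.

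The central computation is the inner product identity. Unfolding the definitions of $(\ ,\ )_{\alpha}$, of the measure $\rho^{\sharp}$, and of the conditional expectation $p$ from \eqref{esiodkdoieodaxs}, and applying Weil's integration formula to move between integrals over $G$, over $H$ and over $G/H$, one should obtain
\[
\int_{G/H}\bigl(\Phi(f\otimes\xi)(\alpha),\Phi(g\otimes\eta)(\alpha)\bigr)_{Y_{\alpha}}\,d\rho^{\sharp}(\alpha)=\bigl(S_{p(g^{\ast}\ast f)}\xi,\eta\bigr)=\bigl(P_{S}(f,g)\xi,\eta\bigr).
\]
The left-hand side is a non-negative sesquilinear form, being inherited from the genuine inner product of $\mathscr{L}_{2}(G/H;\rho^{\sharp};\mathscr{Y})$; hence so is the right-hand side, which is exactly the statement that $P_{S}$ is positive, i.e.\ that $S$ is $\langle \mathscr{L}(\mathscr{B}), [\ ,\ ]_{\mathscr{L}(\mathscr{B}_H)}\rangle$-positive, and $\Phi$ descends to an isometric embedding $\bar{\Phi}\colon \mathscr{X}(P_{S})\hookrightarrow \mathscr{L}_{2}(G/H;\rho^{\sharp};\mathscr{Y})$.

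Next I would show $\bar{\Phi}$ is onto: every section of the form $\alpha\mapsto \kappa_{\alpha}(b\otimes\xi)$ with $b\in B_{x}$ is, after localization by a partition of unity on $G/H$, approximated in $\mathscr{L}_{2}(G/H;\rho^{\sharp};\mathscr{Y})$ by finite sums $\sum\Phi(f_{i}\otimes\xi_{i})$, and such sections span a dense subspace — here the non-degeneracy of $S$ enters. Finally, for $g\in\mathscr{L}(\mathscr{B})$, unwinding the concrete action (the bundle multiplications $\tau_{g(x)}$ and the translation appearing in Construction~\ref{seriodiofdsiofdsoierw}) against the convolution $g\ast f$ should give
\[
\Phi\bigl((g\ast f)\otimes\xi\bigr)=\Bigl(\int_{G}{\rm{Ind}}_{\mathscr{B}_H \uparrow \mathscr{B}}(S)(g(x))\,d\Delta(x)\Bigr)\Phi(f\otimes\xi),
\]
so $\bar{\Phi}$ intertwines the operator $T_{g}$ of ${\rm{Ind}}(S;\langle \mathscr{L}(\mathscr{B}), [\ ,\ ]_{\mathscr{L}(\mathscr{B}_H)}\rangle)$ with the integrated form of ${\rm{Ind}}_{\mathscr{B}_H \uparrow \mathscr{B}}(S)$. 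As the latter is bounded, $T_{g}$ extends boundedly; thus $P_{S}$ is $\mathscr{L}(\mathscr{B})$-bounded, $S$ is inducible via $\langle \mathscr{L}(\mathscr{B}), [\ ,\ ]_{\mathscr{L}(\mathscr{B}_H)}\rangle$, and $\bar{\Phi}$ is the required equivalence.

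The main obstacle is the inner product identity above: arranging $\Phi$ with exactly the right normalization so that the $\rho$- and modular-function factors carried by $(\ ,\ )_{\alpha}$, by $\rho^{\sharp}$ and by $p$ all cancel, and then executing the repeated application of Weil's formula without a stray exponent — this is the computational heart. The density and intertwining steps are comparatively routine partition-of-unity and change-of-variables arguments, but they still lean heavily on the bookkeeping of the section model of Construction~\ref{seriodiofdsiofdsoierw}.
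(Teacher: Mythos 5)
The paper offers no proof of this proposition---it is quoted directly from \cite[XI.9.26]{MR936629}---and your sketch is a faithful reconstruction of the argument given there: build the unitary $\bar{\Phi}$ from $\mathscr{X}(P_S)$ onto $\mathscr{L}_2(G/H;\rho^{\sharp};\mathscr{Y})$, read off positivity and boundedness of $P_S$ from that identification, and check the intertwining against the integrated form. Your opening caveat is also the right one to flag: the right-hand side only exists when $S$ is $\mathscr{B}$-positive, so that hypothesis must be carried (as it is in Fell--Doran's \S XI.9) and cannot be borrowed from Theorem \ref{erwiosdiojedxsqwde}, which depends on this proposition.
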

\end{construction}

\subsection{Equivalence of Fell Bundles}\label{zdjfdcdedcz}
This section is a short summary of the results of \cite{vjnj}. Let $\mathscr{B}^{(1)}$ and $\mathscr{B}^{(2)}$ be two Fell bundles over $G$. 
\begin{definition}
\rm Let $\mathcal{L}$ be Banach bundle $\mathcal{L}=\{L_x\}_{x \in G}$ over $G$. We say that $\mathcal{L}$ is a $\mathscr{B}^{(1)}$-$\mathscr{B}^{(2)}$-$equivalence$ $bundle$ if there are continuous maps
\begin{equation}\begin{split}
& [ \ , \ ]_{\mathscr{B}^{(1)}}: \mathcal{L} \times \mathcal{L} \to \mathscr{B}^{(1)}, \ \mathcal{L}  \times \mathscr{B}^{(1)} \to \mathscr{B}^{(1)};
\\& _{\mathscr{B}^{(2)}}[ \ , \ ]: \mathcal{L} \times \mathcal{L} \to \mathscr{B}^{(2)}, \ \mathscr{B}^{(1)} \times \mathcal{L}  \to  \mathscr{B}^{(1)}, 
\end{split}\end{equation}
such that:

$(i)$ For all $x, y \in G$, $L_s B_r^{(1)} \subset L_{x,y}$ and $[L_x, L_y]_{\mathscr{B}^{(1)}} \subset B_{x^{-1}y}^{(1)}$; $B_x^{(2)} L_y  \subset L_{x,y}$ and $_{\mathscr{B}^{(2)}}[L_x, L_y] \subset B_{xy^{-1}}^{(1)}$.

$(ii)$ For all $x,y \in G$ and $r \in L_x$ the function $L_x \times B_y^{(1)} \to L_{xy}, (r,a) \mapsto ra$ is bilinear and $L_y \to B_{x^{-1}y}, s \mapsto [r,s]_{\mathscr{B}^{(1)}}$ is linear; the function $ B_y^{(2)}  \times L_x \to L_{yx}, (b,r) \mapsto br$ is bilinear and $L_x \to B_{x y^{-1}}, s \mapsto \ _{\mathscr{B}^{(2)}}[s,r]$ is linear.

$(iii)$ For all $r,s \in L$ and $a \in B^{(1)}$ and $b \in B^{(2)}$, $[r,s]_{\mathscr{B}^{(1)}}^{\ast}=[s,r]_{\mathscr{B}^{(1)}}$, $[r,sa]_{\mathscr{B}^{(1)}}=[r,s ]_{\mathscr{B}^{(1)}} \ a$ and $_{\mathscr{B}^{(2)}}[r,s]^{\ast}=\ _{\mathscr{B}^{(2)}}[s,r]^{\ast}$, $_{\mathscr{B}^{(2)}}[br,s]=b \ _{\mathscr{B}^{(2)}}[r,s ]$; $[r,r ]_{\mathscr{B}^{(1)}} \geq 0$ in $B^{(1)}_e$ and $_{\mathscr{B}^{(2)}}[r,r ] \geq 0$ in $B^{(2)}_e$; $\|r\|^2=[r,r ]_{\mathscr{B}^{(1)}}=_{\mathscr{B}^{(2)}}[r,r ]$.

$(iv)$ For all $r, s$ and $t$ in $L$, $_{\mathscr{B}^{(2)}}[r,s]t=r[s,t]_{\mathscr{B}^{(1)}}$.

$(v)$ $B_e^{(1)}=\overline{{\rm{span}}}\{[r,s]_{\mathscr{B}^{(1)}}: r, s \in G\}$; $B_e^{(2)}=\overline{{\rm{span}}}\{_{\mathscr{B}^{(2)}}[r,s]: r, s \in G\}$.

If these conditions hold, we say that $\mathscr{B}^{(1)}$ and $\mathscr{B}^{(2)}$ are $equivalent$ $implemented$ $by$ $\mathcal{L}$. 

In the rest of this paper, $\mathbf{we \ assume \ that}$ $\mathscr{B}^{(1)}$ $\mathbf{and}$ $\mathscr{B}^{(2)}$ $\mathbf{are}$ $\mathbf{two \ equivalent}$  $\mathbf{Fell \ bundles \ over}$ $G$ $\mathbf{implemented \ by}$ $\mathcal{L}$.
\end{definition}

\begin{construction}
\rm We can define a right $\mathscr{L}(\mathscr{B}^{(1)})$-module and left $\mathscr{L}(\mathscr{B}^{(1)})$-module structure for $\mathscr{L}(\mathcal{L})$ by 
\begin{equation*}
(\mathfrak{f} \cdot f)(x)=\int_G \mathfrak{f}(y)f(y^{-1}x)dy; \ (g \cdot \mathfrak{f})(x)=\int_G g(y)\mathfrak{f}(y^{-1}x)dy  
\end{equation*}
$(\mathfrak{f} \in \mathscr{L}(\mathcal{L}), f \in \mathscr{L}(\mathscr{B}^{(1)}), g \in \mathscr{L}(\mathscr{B}^{(2)}))$. Furthermore, we define $[ \ , \ ]_{\mathscr{L}(\mathscr{B}^{(1)})}: \mathscr{L}(\mathcal{L}) \times \mathscr{L}(\mathcal{L}) \to \mathscr{L}(\mathscr{B}^{(1)})$ by
\begin{equation*}
[\mathfrak{f},\mathfrak{g}]_{\mathscr{L}(\mathscr{B}^{(1)})}(x)=\int_G [\mathfrak{f}(y), \mathfrak{g}(y^{-1}x)]_{\mathscr{B}^{(1)}}dy
\end{equation*}
$(\mathfrak{f}, \mathfrak{g} \in \mathscr{L}(\mathcal{L}))$, and $_{\mathscr{L}(\mathscr{B}^{(2)})}[ \ , \ ]: \mathscr{L}(\mathcal{L}) \times \mathscr{L}(\mathcal{L}) \to \mathscr{L}(\mathscr{B}^{(2)})$ by
\begin{equation*}
_{\mathscr{L}(\mathscr{B}^{(2)})}[\mathfrak{f},\mathfrak{g}](x)=\int_G {_{\mathscr{B}^{(2)}}[\mathfrak{f}(y), \mathfrak{g}(y^{-1}x)]}dy
\end{equation*}
$(\mathfrak{f}, \mathfrak{g}  \in \mathscr{L}(\mathcal{L}))$.

\end{construction}
\begin{remark}
\rm By \cite[Corollary 4.3]{vjnj}, $\langle \mathscr{L}(\mathcal{L}), _{\mathscr{L}(\mathscr{B}^{(2)})}[ \ , \ ],  [ \ , \ ]_{\mathscr{L}(\mathscr{B}^{(1)})}  \rangle$ is a $\mathscr{L}(\mathscr{B}^{(2)})$-$\mathscr{L}(\mathscr{B}^{(1)})$ imprimitivity system implementing the Morita equivalence between $C^{\ast}(\mathscr{B}^{(1)})$ and $C^{\ast}(\mathscr{B}^{(2)})$ (see Convention \ref{fdsickjldfsofxc}).

\end{remark}

\section{The Bridge Lemmas}

\subsection{Bridge Lemma: Path I}

\begin{lemma}\label{dsijkjldskjpoiadzx}
Let $A$ and $C$ be two $\ast$-algebras;  $\mathscr{L}_{C,A}=\langle L, [\ , \ ]_A \rangle$ is a right $A$-rigged left $C$-module. Let $B$ be a $\ast$-subalgebra of $A$ with an $A, B$ conditional expectation $p: A \to B$. The map $[ \ ,\  ]_B: L \times L \to B$ defined by
\begin{equation*}
[r, s]_B=p([r,s]_A) \ \ \ \ (r,s \in L)
\end{equation*}
making $L$ a right $B$-rigged left $C$-module, which we denote by $\mathscr{L}_{C,B}$. Assume that $S$ is a non-degenerate $\ast$-representation of $B$. If $S$ is inducible to $A$ via $p$ and ${\rm{Ind}}(S; p)$ is inducible to $C$ via $\mathscr{L}_{C,A}$, then $S$ is inducible to $C$ via $\mathscr{L}_{C,B}$. In this case, we have
\begin{equation*}
{\rm{Ind}}(S; \mathscr{L}_{C,B}) \simeq {\rm{Ind}}({\rm{Ind}}(S; p); \mathscr{L}_{C,A}).
\end{equation*}
\end{lemma}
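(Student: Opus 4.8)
The plan is to work at the level of the concrete Hilbert spaces built from the operator inner products and to produce an explicit unitary identifying the two induced representation spaces, then check intertwining. Write $X = X(S)$ for the representation space of $S$, write $T = \mathrm{Ind}(S;p)$ for the induced representation of $A$ on $A \otimes_S X$ (the Hilbert space $\mathscr{X}(P_S)$ attached to the operator inner product $P_S(a,b) = S_{[b,a]_B} = S_{p(b^\ast a)}$), and write $U = \mathrm{Ind}(T; \mathscr{L}_{C,A})$ for the induced representation of $C$ on $L \otimes_T (A\otimes_S X)$. On the other side we have the candidate space $L \otimes_S X$ built from the operator inner product $P'_S(r,s) = S_{[s,r]_B} = S_{p([s,r]_A)}$ on $L$. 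The heart of the matter is the natural map
\begin{equation*}
\Phi: L \otimes (A \otimes X) \to L \otimes X, \qquad r \otimes (a \otimes \xi) \mapsto (r\cdot a) \otimes \xi,
\end{equation*}
where $r \cdot a$ is the right $A$-action on $L$; I will show $\Phi$ descends to a well-defined isometry of the completed Hilbert spaces with dense range, hence a unitary, and that it carries $U$ to $\mathrm{Ind}(S;\mathscr{L}_{C,B})$.

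First I would verify the inner-product identity on elementary tensors. Expanding, for $r,s \in L$, $a,b\in A$, $\xi,\eta\in X$, one has on the left
\begin{equation*}
\bigl( r\otimes(a\otimes\xi),\ s\otimes(b\otimes\eta)\bigr) = \bigl( T_{[s,r]_A}(a\otimes\xi),\ b\otimes\eta\bigr) = \bigl( ([s,r]_A\cdot a)\otimes\xi,\ b\otimes\eta\bigr) = \bigl(S_{p(b^\ast [s,r]_A a)}\xi,\ \eta\bigr),
\end{equation*}
using that $T$ is the representation induced via $p$, so $T_c$ acts by left multiplication $a \mapsto ca$ on $A$ inside $A\otimes_S X$, and the inner product there is $(a\otimes\xi, b\otimes\eta) = (S_{p(b^\ast a)}\xi,\eta)$. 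On the right, using $[s,r]_A a = [s, r a]_A$ (property (iii)/axiom (iii) of a rigged module, linearity of $[\ ,\ ]_A$ in the right variable together with the module identity $[s, ra]_A = [s,r]_A\, a$) and then $p(b^\ast [s, ra]_A) = p([sb, ra]_A) = [sb, ra]_B$ — wait, more carefully $b^\ast[s, ra]_A = [sb, ra]_A$ by axiom (iv) — one gets
\begin{equation*}
\bigl((r\cdot a)\otimes\xi,\ (s\cdot b)\otimes\eta\bigr)_{L\otimes_S X} = \bigl(S_{[s b, r a]_B}\,\xi,\ \eta\bigr) = \bigl(S_{p([sb,ra]_A)}\,\xi,\eta\bigr),
\end{equation*}
which matches. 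This simultaneously proves that $P'_S$ is an operator inner product (hence $S$ is $\mathscr{L}_{C,B}$-positive) because the left-hand Gram matrices are positive by $\mathscr{L}_{C,A}$-positivity of $T$, that $\Phi$ kills the null space and is isometric, and — since the right $A$-action on $L$ is nondegenerate enough that vectors $(r\cdot a)\widetilde\otimes\xi$ are total (using an approximate identity of $A$, or $B$, acting on $L$) — that $\Phi$ has dense range. Thus $\Phi$ extends to a unitary $\overline\Phi: L\otimes_T(A\otimes_S X) \to L\otimes_S X$.

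Next I would check that $\overline\Phi$ intertwines the $C$-actions. On the left, $U_c = \mathrm{Ind}(T;\mathscr{L}_{C,A})(c)$ is the extension of $r\otimes v \mapsto (c\cdot r)\otimes v$ for $v \in A\otimes_S X$; on the right, $\mathrm{Ind}(S;\mathscr{L}_{C,B})(c)$ is the extension of $r'\otimes\xi \mapsto (c\cdot r')\otimes\xi$. Since $\Phi(r\otimes(a\otimes\xi)) = (r\cdot a)\otimes\xi$ and $c\cdot(r\cdot a) = (c\cdot r)\cdot a$ (associativity of the bimodule actions on $L$), we get $\Phi \circ U_c = \mathrm{Ind}(S;\mathscr{L}_{C,B})(c)\circ\Phi$ on the dense subspace, so the identity passes to the closure; this also shows $\mathrm{Ind}(S;\mathscr{L}_{C,B})(c)$ is bounded for each $c$, i.e. $S$ is $C$-bounded with respect to $\mathscr{L}_{C,B}$, completing the proof that $S$ is inducible to $C$ via $\mathscr{L}_{C,B}$ and that $\mathrm{Ind}(S;\mathscr{L}_{C,B}) \simeq \mathrm{Ind}(\mathrm{Ind}(S;p);\mathscr{L}_{C,A})$.

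\textbf{The main obstacle} I anticipate is not the formal computation but the bookkeeping around nondegeneracy: one must be sure that $\Phi$ has dense range (equivalently that the simple tensors $(r\cdot a)\widetilde\otimes\xi$ span a dense subspace of $L\otimes_S X$), which requires knowing that $L$ is a nondegenerate right $A$-module (or at least that $L\cdot A$ is dense in $L$ in the relevant seminorm); in the Fell-bundle application this holds, but in the abstract statement it should be folded in or deduced from the standing hypotheses, and I would want to state it cleanly. A secondary subtlety is the direction/order of arguments in the brackets — the paper's conventions put $[s,r]$ (not $[r,s]$) into $P_S$, so I would be careful to track which variable is conjugate-linear throughout so that the two Gram matrices genuinely coincide rather than differing by a transpose/adjoint; once the conventions are pinned down this is routine.
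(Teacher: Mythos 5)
Your proposal is essentially the paper's own proof: the same unitary $r\otimes(a\otimes\xi)\mapsto (r\cdot a)\otimes\xi$, the same inner-product identity $p(b^{\ast}[s,r]_A a)=[s\cdot b,\,r\cdot a]_B$ used to transport positivity and $C$-boundedness from the two-step induction, and the same intertwining check. The one worry you flag --- density of the range of $\Phi$ --- is resolved not by nondegeneracy of the right $A$-action on $L$ but by nondegeneracy of $S$ itself (a standing hypothesis): your own computation shows $(r\cdot a)\,\widetilde{\otimes}\,\xi = r\,\widetilde{\otimes}\,S_a\xi$ in $L\otimes_S X$ for $a\in B$, and the vectors $S_a\xi$ span a dense subspace of $X(S)$, which is exactly how the paper closes this step.
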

\begin{proof}
Let define $\Gamma': L \times L \to \mathcal{O}(X(S))$ by  
\begin{equation*}
\langle s,r\rangle \mapsto S_{[r,s]_B} \ \ \ \ (r,s \in L).
\end{equation*}
 In order to prove that $S$ is inducible to $C$ via $\mathscr{L}_{C,B}$, our task is to prove that $S$ is $\mathscr{L}_{C,B}$-positive and that $\Gamma'$ is a $C$-bounded operator inner product (see Construction \ref{serijcdkjdiucjifdipier}).
 
By \cite[XI.4.5]{MR936629}, since $S$ is non-degenerate, the inequality 
\begin{equation}\label{sdkjkljadlicajlklca}
(S_{[t,t]_B}(S_a\xi), S_a\xi)=(S_{[t \cdot a, t \cdot a]_A}\xi, \xi)\geq 0 \ \ \ \ (t \in L; a \in B; \xi \in X(S)).
\end{equation}
implies that $S$ is $\mathscr{L}_{C,B}$-positive.  
 
 By our hypothesis and \cite[XI.5.6]{MR936629}, the map $\Gamma: (L \otimes A) \times (L \otimes A) \to \mathcal{O}(X(S))$ satisfying
\begin{equation}\begin{split}\label{adsjlklkjdkljesadsa}
\Gamma(s \otimes a, r \otimes b)&=S_{p(b^{\ast}[r,s]_A \ a)} 
\\&=S_{p([r \cdot b, s \cdot a]_A)}   \ \ \ \ (a, b \in A; r,s \in L)
\end{split}\end{equation}
is a $C$-bounded operator inner product on $L \otimes A$.
Then by (\ref{adsjlklkjdkljesadsa}) and 
\begin{equation}\begin{split}
\Gamma': ( r \cdot b, s \cdot a ) &\mapsto S_{[s \cdot a, r \cdot b]_B}
\\&=S_{p([ r \cdot b, s \cdot a]_A)},
 \end{split}\end{equation}
we conclude that $\Gamma'|((L \cdot A) \times (L \cdot A))$ is a $C$-bounded inner product on $L \cdot A$. But $S$ is non-degenerate, hence $\mathfrak{X}(\Gamma')$ is equal to $\mathfrak{X}(\Gamma'|((L \cdot A) \times (L \cdot A)))$ implemented by the unique unitary map satisfying
\begin{equation}\label{sdiiodksxoiw0qweioasdok}
(r \cdot a) \xi \mapsto r \otimes S_a(\xi) \ \ \ \ (r \in L; a \in B; \xi \in X(S)).
\end{equation}
Therefore, since $\Gamma'|((L \cdot A) \times (L \cdot A))$ is $C$-bounded, we can conclude that $\Gamma'$ is $C$-bounded. 

Therefore, $S$ is inducible to $C$ via $\mathscr{L}_{C,B}$; and it is easy to verify that the unitary map satisfying (\ref{sdiiodksxoiw0qweioasdok}) implementing the unitary equivalence of ${\rm{Ind}}(S; \mathscr{L}_{C,B})$ and ${\rm{Ind}}({\rm{Ind}}(S; p); \mathscr{L}_{C,A})$. Our proof is complete. 
\end{proof}

\begin{notation}
\rm Recall from Example \ref{eijaioadocareadas} that we have a conditional expectation $p^{(1)}: \mathscr{L}(\mathscr{B}^{(1)}) \to \mathscr{L}(\mathscr{B}_H^{(1)})$. Therefore, we can construct a right $\mathscr{L}(\mathscr{B}^{(1)}_H)$-rigged left $\mathscr{L}(\mathscr{B}^{(2)})$-module $\langle \mathscr{L}(\mathcal{L}), [ \ , \ ]' \rangle$, where $[ \ , \ ]': \mathscr{L}(\mathcal{L}) \times \mathscr{L}(\mathcal{L}) \to \mathscr{L}(\mathscr{B}^{(1)}_H)$ is defined by
\begin{equation*}
[\mathfrak{f}  ,  \mathfrak{g}]'=p^{(1)}([\mathfrak{f}  ,   \mathfrak{g}]_{\mathscr{L}(\mathscr{B}^{(1)})}) \ \ \ \ (\mathfrak{f}, \ \mathfrak{g} \in \mathscr{L}(\mathcal{L})),
\end{equation*}
We use the symbol $\mathcal{B}_{\mathscr{B}^{(1)}_H, \mathscr{B}^{(2)}}$ to denote $\langle \mathscr{L}(\mathcal{L}), [ \ , \ ]' \rangle$, which we call the $Bridge$ $from$ $\mathscr{B}^{(1)}_H$ to $\mathscr{B}^{(2)}$.
\end{notation}

$\mathbf{In \  the \  rest \ of \ this \ paper, if}$ $S$ $\mathbf{is \ a}$ $\ast$-$\mathbf{representation \ of \ a \ Fell \ bundle}$ $\mathscr{B}$, $\mathbf{then \ we \ denote}$ $\mathbf{its \ integrated \ form, \ which \ is \ a \ representation \ of }$ $\mathscr{L}(\mathscr{B})$, $\mathbf{by}$ $\widetilde{S}$. $\mathbf{But \ we \ shall \ write }$ $S$ $\mathbf{instead \ of}$ $\widetilde{S}$ $\mathbf{if \ no \ confusion \ can \ arise}$.

\begin{proposition}\label{dsiijodsoicserer}
$({\rm{Bridge \ Lemma \ I}})$ Let $S$ be a non-degenerate $\ast$-representation of $\mathscr{B}^{(1)}_H$. If $S$ is $\mathscr{B}^{(1)}$-positive, then $\widetilde{S}$ is inducible to $\mathscr{L}(\mathscr{B}^{(2)})$ via $\mathcal{B}_{\mathscr{B}^{(1)}_H, \mathscr{B}^{(2)}}$; and ${\rm{Ind}}(\widetilde{S}; \mathcal{B}_{\mathscr{B}^{(1)}_H, \mathscr{B}^{(2)}})$ is the integrated form of a $\ast$-representation of $\mathscr{B}^{(2)}$, which we denote by ${\rm{Ind}}(S; \mathcal{B}_{\mathscr{B}^{(1)}_H, \mathscr{B}^{(2)}})$. In this case, we say that ${\rm{Ind}}(S; \mathcal{B}_{\mathscr{B}^{(1)}_H, \mathscr{B}^{(2)}})$ is induced from $S$ via $\mathcal{B}_{\mathscr{B}^{(1)}_H, \mathscr{B}^{(2)}}$; and we have
\begin{equation}\label{sdioiosdkosddsweew}
{\rm{Ind}}({\rm{Ind}}_{\mathscr{B}^{(1)}_H \uparrow \mathscr{B}^{(1)}}(S); \langle \mathscr{L}(\mathcal{L}),  [ \ , \ ]_{\mathscr{L}(\mathscr{B}^{(2)})} \rangle)\simeq {\rm{Ind}}(S; \mathcal{B}_{\mathscr{B}^{(1)}_H, \mathscr{B}^{(2)}}).
\end{equation}
\end{proposition}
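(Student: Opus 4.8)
The plan is to obtain this as an essentially formal consequence of the abstract Bridge Lemma~\ref{dsijkjldskjpoiadzx}. I would apply that lemma with
\begin{equation*}
C=\mathscr{L}(\mathscr{B}^{(2)}),\qquad A=\mathscr{L}(\mathscr{B}^{(1)}),\qquad B=\mathscr{L}(\mathscr{B}^{(1)}_{H}),
\end{equation*}
where $B\subset A$ carries the conditional expectation $p=p^{(1)}$ of Construction~\ref{rseidierdcdewer}, and with $\mathscr{L}_{C,A}=\langle\mathscr{L}(\mathcal{L}),[\ ,\ ]_{\mathscr{L}(\mathscr{B}^{(1)})}\rangle$ the right $\mathscr{L}(\mathscr{B}^{(1)})$-rigged left $\mathscr{L}(\mathscr{B}^{(2)})$-module that, by \cite[Corollary~4.3]{vjnj}, implements the Morita equivalence $C^{\ast}(\mathscr{B}^{(1)})\sim C^{\ast}(\mathscr{B}^{(2)})$. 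Because $[\ ,\ ]'=p^{(1)}\bigl([\ ,\ ]_{\mathscr{L}(\mathscr{B}^{(1)})}\bigr)$, the module $\mathscr{L}_{C,B}$ produced by Lemma~\ref{dsijkjldskjpoiadzx} is exactly the Bridge $\mathcal{B}_{\mathscr{B}^{(1)}_{H},\mathscr{B}^{(2)}}$; so, once the two inducibility hypotheses of that lemma are verified, it yields at once that $\widetilde{S}$ is inducible to $\mathscr{L}(\mathscr{B}^{(2)})$ via $\mathcal{B}_{\mathscr{B}^{(1)}_{H},\mathscr{B}^{(2)}}$ and that
\begin{equation*}
{\rm{Ind}}(\widetilde{S};\mathcal{B}_{\mathscr{B}^{(1)}_{H},\mathscr{B}^{(2)}})\simeq{\rm{Ind}}\bigl({\rm{Ind}}(\widetilde{S};p^{(1)});\langle\mathscr{L}(\mathcal{L}),[\ ,\ ]_{\mathscr{L}(\mathscr{B}^{(1)})}\rangle\bigr).
\end{equation*}
It then only remains to rewrite the inner term on the right.

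The first hypothesis, that $\widetilde{S}$ be inducible to $\mathscr{L}(\mathscr{B}^{(1)})$ via $p^{(1)}$, is exactly where the assumption that $S$ is $\mathscr{B}^{(1)}$-positive enters. By Construction~\ref{seriodiofdsiofdsoierw}, $\mathscr{B}^{(1)}$-positivity of $S$ is equivalent to positivity of the forms $(\ ,\ )_{\alpha}$, so that ${\rm{Ind}}_{\mathscr{B}^{(1)}_{H}\uparrow\mathscr{B}^{(1)}}(S)$ is defined and is a non-degenerate $\ast$-representation of $\mathscr{B}^{(1)}$; the Proposition in Construction~\ref{rseidierdcdewer} (\cite[XI.9.26]{MR936629}) then gives both the inducibility of $\widetilde{S}$ via $p^{(1)}$ and the unitary equivalence
\begin{equation*}
{\rm{Ind}}(\widetilde{S};p^{(1)})\simeq\widetilde{{\rm{Ind}}_{\mathscr{B}^{(1)}_{H}\uparrow\mathscr{B}^{(1)}}(S)}.
\end{equation*}
In particular ${\rm{Ind}}(\widetilde{S};p^{(1)})$ is a non-degenerate $\ast$-representation of $\mathscr{L}(\mathscr{B}^{(1)})$ bounded for the $C^{\ast}(\mathscr{B}^{(1)})$-norm.

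For the second hypothesis, that ${\rm{Ind}}(\widetilde{S};p^{(1)})$ be inducible to $\mathscr{L}(\mathscr{B}^{(2)})$ via $\langle\mathscr{L}(\mathcal{L}),[\ ,\ ]_{\mathscr{L}(\mathscr{B}^{(1)})}\rangle$, I would argue as follows. Since the latter module implements a Morita equivalence between $C^{\ast}(\mathscr{B}^{(1)})$ and $C^{\ast}(\mathscr{B}^{(2)})$ in the sense of Convention~\ref{fdsickjldfsofxc}, the standard theory of Rieffel induction for Morita-equivalent $C^{\ast}$-algebras (\cite{MR936629}) shows that every non-degenerate $\ast$-representation of $\mathscr{L}(\mathscr{B}^{(1)})$ bounded for the $C^{\ast}(\mathscr{B}^{(1)})$-norm is inducible via it, with the induced representation of $\mathscr{L}(\mathscr{B}^{(2)})$ equal to the restriction of a non-degenerate $\ast$-representation of $C^{\ast}(\mathscr{B}^{(2)})$ --- induction at the pre-$C^{\ast}$ and $C^{\ast}$ levels being compatible (cf.~\cite{MR1634408}) --- and such a representation of $C^{\ast}(\mathscr{B}^{(2)})$ disintegrates into a non-degenerate $\ast$-representation of the Fell bundle $\mathscr{B}^{(2)}$. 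With both hypotheses in hand, Lemma~\ref{dsijkjldskjpoiadzx} applies; substituting the equivalence of the previous paragraph and using that inducing via $\langle\mathscr{L}(\mathcal{L}),[\ ,\ ]_{\mathscr{L}(\mathscr{B}^{(1)})}\rangle$ respects unitary equivalence, we obtain
\begin{equation*}
{\rm{Ind}}(\widetilde{S};\mathcal{B}_{\mathscr{B}^{(1)}_{H},\mathscr{B}^{(2)}})\simeq{\rm{Ind}}\bigl(\widetilde{{\rm{Ind}}_{\mathscr{B}^{(1)}_{H}\uparrow\mathscr{B}^{(1)}}(S)};\langle\mathscr{L}(\mathcal{L}),[\ ,\ ]_{\mathscr{L}(\mathscr{B}^{(1)})}\rangle\bigr),
\end{equation*}
which is the asserted equivalence~(\ref{sdioiosdkosddsweew}); and the remarks above show that this representation is the integrated form of a $\ast$-representation of $\mathscr{B}^{(2)}$, which one then names ${\rm{Ind}}(S;\mathcal{B}_{\mathscr{B}^{(1)}_{H},\mathscr{B}^{(2)}})$.

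The main obstacle I anticipate is not the formal use of Lemma~\ref{dsijkjldskjpoiadzx}, which is immediate, but this second step: one must coordinate the Morita equivalence $C^{\ast}(\mathscr{B}^{(1)})\sim C^{\ast}(\mathscr{B}^{(2)})$ of \cite{vjnj}, the agreement of induction at the pre-$C^{\ast}$ and $C^{\ast}$ levels, and the disintegration theorem, in order to know in one stroke that ${\rm{Ind}}(\widetilde{S};p^{(1)})$ is inducible via the linking module and that what comes out is again the integrated form of a representation of $\mathscr{B}^{(2)}$. Everything else --- identifying the data fed into Lemma~\ref{dsijkjldskjpoiadzx} and rewriting the right-hand side --- is routine.
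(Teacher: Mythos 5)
Your proposal is correct and follows essentially the same route as the paper: the paper's own proof is simply two invocations of Lemma~\ref{dsijkjldskjpoiadzx} with the identifications $C=\mathscr{L}(\mathscr{B}^{(2)})$, $A=\mathscr{L}(\mathscr{B}^{(1)})$, $B=\mathscr{L}(\mathscr{B}^{(1)}_H)$, $p=p^{(1)}$, followed by the observation that the left-hand side of the resulting equivalence is an integrated form. You merely spell out the verification of the two inducibility hypotheses (via \cite[XI.9.26]{MR936629} and the Morita equivalence of $C^{\ast}(\mathscr{B}^{(1)})$ and $C^{\ast}(\mathscr{B}^{(2)})$), which the paper leaves implicit.
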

\begin{proof}
By Lemma \ref{dsijkjldskjpoiadzx} ${\rm{Ind}}(\widetilde{S}; \mathcal{B}_{\mathscr{B}^{(1)}_H, \mathscr{B}^{(2)}})$ exists; and by  Lemma \ref{dsijkjldskjpoiadzx} again we have
\begin{equation}\label{sjoiojdsjoids}
{\rm{Ind}}({\rm{Ind}}_{\mathscr{B}^{(1)}_H \uparrow \mathscr{B}^{(1)}}(S); \langle \mathscr{L}(\mathcal{L}),  [ \ , \ ]_{\mathscr{L}(\mathscr{B}^{(2)})} \rangle)^{\sim}|(\mathscr{L} (\mathscr{B}^{(2)}))  \simeq {\rm{Ind}} (\widetilde{S}; \mathcal{B}_{\mathscr{B}^{(1)}_H, \mathscr{B}^{(2)}}).
\end{equation}
Since the left side of (\ref{sjoiojdsjoids}) is the integrated form of a $\ast$-representation of $\mathscr{B}^{(2)}$, then so is ${\rm{Ind}}(\widetilde{S}; \mathcal{B}_{\mathscr{B}^{(1)}_H, \mathscr{B}^{(2)}})$. (\ref{sdioiosdkosddsweew}) is readily derived from (\ref{sjoiojdsjoids}).
\end{proof}

\subsection{Bridge Lemma: Path II}

\begin{definition}\label{sdiosdoisdopiwep9dkopsx}
\rm Let $\mathscr{Z}$ be a Hilbert bundle over $G/H$ with the bundle space $Z$ and fibers $\{Z_{\alpha}\}_{\alpha \in G/H}$.  Let$\theta$ is a $\ast$-representation of $\mathscr{B}$ on the Hilbert space $\mathscr{L}_2(\mathscr{Z})$. We assume that for any $a \in B$ there is a map $\theta'_a: Z \to Z$ satisfying

(i) $\theta_a(\mathsf{f})(\alpha)=\theta'_a(\mathsf{f}(\pi(a)^{-1}\alpha))$ for $a \in B$, $\mathsf{f} \in \mathscr{L}(\mathscr{Z})$ and $\alpha \in G/H$. In particular, $\theta'_a(Z_{\alpha}) \subset Z_{\pi(a)\alpha}$ $(a \in B;  \alpha \in G/H)$. 

(ii) For any $x \in G$ the closure of the linear span of $\{\theta'_a(\xi): a \in B_{xH}, \xi \in Z_H\}$ is $Z_{xH}$, and $\theta'_a|Z_{\alpha}$ is a bounded linear operator from $Z_{\alpha}$ into $Z_{\pi(a)\alpha}$.

(iii) $a \mapsto \theta'_a(\xi)$ is a continuous map from $B$ into $Z$ for any fixed $\xi \in Z$.

We say that ${\theta}$ is an $integrated$ $representation$ $of$ $\mathscr{B}$ on $\mathscr{Z}$; and we say that $\theta'$ is the $derivative$ of ${\theta}$.
\end{definition}

\begin{lemma}\label{dsiodfwiosdoierwc}
Suppose that ${\theta}$ is an integrated representation of $\mathscr{B}$ on the Hilbert bundle $\mathscr{Z}=\{Z_{\alpha}\}_{\alpha \in G/H}$ over $G/H$ with the derivative $\theta'$. By (i) of Definition \ref{dsiodfwiosdoierwc} $\theta'_a(Z_H) \subset Z_H$ for $a \in B_H$; so we can define a map on $\mathscr{B}_H$ into $\mathcal{O}(Z_H)$, which we denote by $^{Z_{H}}({\theta'}|B_H)$. Then $^{Z_{H}}({\theta'}|B_H)$ is a $\mathscr{B}^{(2)}$-positive $^{\ast}$-representation of $\mathscr{B}_H$. Let $\mathscr{Y}$ be the Hilbert bundle over $G/H$ induced from $^{Z_{H}}({\theta}|B_H)$; then there is a unique unitary map $\Psi: \mathscr{Y} \to \mathscr{Z}$ satisfying
\begin{equation*}
\Psi: \kappa_{\pi(a)}(a \otimes \xi) \mapsto \rho(\pi(a))^{-1}\theta'_a(\xi)  \ \ \ \ (a \in B; \ \xi \in Z_H);
\end{equation*}
and $\Psi$ implements the unitary equivalence of ${\theta}$ and ${\rm{Ind}}_{\mathscr{B}_{H} \uparrow \mathscr{B}}(^{Z_{H}}({\theta'}|B_H))$.
\end{lemma}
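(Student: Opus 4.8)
The plan is to verify directly that $^{Z_H}(\theta'|B_H)$ is a $\ast$-representation of $\mathscr{B}_H$, show it is $\mathscr{B}$-positive by exhibiting the candidate unitary $\Psi$, and then identify $\theta$ with the induced representation by checking $\Psi$ intertwines the two actions on the dense set of elementary tensors. First I would check that $^{Z_H}(\theta'|B_H)$ is a $\ast$-representation: multiplicativity $\theta'_{ab}=\theta'_a\theta'_b$ and the adjoint relation $\theta'_{a^\ast}=(\theta'_a)^\ast$ should follow from the corresponding facts for $\theta$ restricted to the fiber $Z_H$, using property (i) of Definition \ref{sdiosdoisdopiwep9dkopsx} to see that on $Z_H$ the operator $\theta'_a$ ($a\in B_H$, so $\pi(a)=H$) is exactly the ``value at $H$'' of $\theta_a$, which is an honest $\ast$-representation by hypothesis; boundedness is property (ii), and non-degeneracy on $Z_H$ follows from the $x=e$ case of property (ii).

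Next I would construct $\Psi$. On the algebraic side, for each $\alpha=\pi(a)\in G/H$ the fiber $Y_\alpha$ of the induced bundle $\mathscr{Y}$ is the Hilbert-space completion of $\sum^\oplus_{x\in\alpha}(B_x\otimes Z_H)$ modulo the null space of $(\,,\,)_\alpha$, where $(b\otimes\xi,c\otimes\eta)_\alpha=(\rho(x)\rho(y))^{-1/2}(\,{}^{Z_H}(\theta'|B_H)_{c^\ast b}\xi,\eta)$. I would define $\widehat\Psi\colon \sum^\oplus_{x\in\alpha}(B_x\otimes Z_H)\to Z_\alpha$ by $b\otimes\xi\mapsto \rho(\pi(b))^{-1}\theta'_b(\xi)$ and compute
\begin{equation*}
(\widehat\Psi(b\otimes\xi),\widehat\Psi(c\otimes\eta))_{Z_\alpha}=(\rho(x)\rho(y))^{-1}(\theta'_b\xi,\theta'_c\eta)=(\rho(x)\rho(y))^{-1}((\theta'_c)^\ast\theta'_b\xi,\eta),
\end{equation*}
and since $(\theta'_c)^\ast\theta'_b=\theta'_{c^\ast b}$ maps $Z_H$ into $Z_H$ (as $\pi(c^\ast b)=H$), this equals $(\rho(x)\rho(y))^{-1/2}$ times $(\rho(x)\rho(y))^{-1/2}(\,{}^{Z_H}(\theta'|B_H)_{c^\ast b}\xi,\eta)$ — so $\widehat\Psi$ is (up to the book-keeping of the $\rho$ factors, which should match exactly) isometric for $(\,,\,)_\alpha$. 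Positivity of $(\,,\,)_\alpha$ then follows because it is pulled back from the genuine inner product on $Z_\alpha$; by the criterion quoted in Construction \ref{seriodiofdsiofdsoierw}, this is precisely $\mathscr{B}$-positivity of $^{Z_H}(\theta'|B_H)$. The isometry $\widehat\Psi$ descends to the quotient and completion, giving $\Psi\colon Y_\alpha\to Z_\alpha$; surjectivity of $\Psi$ is exactly property (ii) of Definition \ref{sdiosdoisdopiwep9dkopsx} (the span of the $\theta'_a(\xi)$, $a\in B_{xH}$, $\xi\in Z_H$, is dense in $Z_{xH}$), so $\Psi$ is unitary on each fiber; continuity in $\alpha$ (so that $\Psi$ is a bundle map) follows from property (iii).

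Finally I would check that $\Psi$ intertwines. Comparing the action of $\mathrm{Ind}_{\mathscr{B}_H\uparrow\mathscr{B}}(^{Z_H}(\theta'|B_H))(b)$ on $\mathscr{L}_2(G/H;\rho^\sharp;\mathscr{Y})$ with $\theta_b$ on $\mathscr{L}_2(\mathscr{Z})$: the induced action is $(f(\alpha))\mapsto(\tau_b f(\pi(b)^{-1}\alpha))$ with $\tau_b\kappa_\alpha(c\otimes\xi)=\kappa_{\pi(b)\alpha}(bc\otimes\xi)$, and applying $\Psi$ fiberwise turns $\tau_b$ into the operator $\theta'_b$ (because $\Psi\tau_b\kappa_\alpha(c\otimes\xi)=\rho(\pi(bc))^{-1}\theta'_{bc}\xi=\rho(\pi(bc))^{-1}\theta'_b\theta'_c\xi=\theta'_b(\Psi\kappa_\alpha(c\otimes\xi))$, using $\rho(\pi(bc))=\rho(\pi(b)\pi(c))$ — here one must be careful, since $\rho$ is only a rho-function, not multiplicative, and this mismatch is absorbed into the weight $\rho^\sharp$ defining the $\mathscr{L}_2$-norm, which is exactly how the book handles it), so $\Psi$ conjugates the induced action of $b$ into the action $(\mathsf{f}(\alpha))\mapsto(\theta'_b\mathsf{f}(\pi(b)^{-1}\alpha))$, which is $\theta_b$ by property (i). Uniqueness of $\Psi$ is immediate since the elementary tensors $\kappa_{\pi(a)}(a\otimes\xi)$ span a dense subspace.

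The main obstacle will be the third paragraph — not the intertwining identity itself, which is formal, but getting all the $\rho$-function weight factors ($\rho(\pi(a))^{-1}$ in $\Psi$, the $(\rho(x)\rho(y))^{-1/2}$ in $(\,,\,)_\alpha$, and the modular weight $\rho^\sharp$ in the definition of $\mathscr{L}_2(G/H;\rho^\sharp;\mathscr{Y})$) to cancel consistently so that $\Psi$ is genuinely an \emph{isometry} of $\mathscr{L}_2$-spaces and not merely a bounded bundle isomorphism; this is the standard delicate computation in Fell's induced-representation machinery (cf.\ \cite[\S XI.9]{MR936629}), and care is needed because $\rho$ is not a homomorphism.
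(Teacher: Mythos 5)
Your proposal is correct and follows essentially the same route as the paper's proof: the key step in both is the identity $(a\otimes\xi,b\otimes\eta)_\alpha=(\theta'_{b^\ast a}\xi,\eta)_{Z_H}=(\theta'_a\xi,\theta'_b\eta)_{Z_\alpha}$, which exhibits $(\,\cdot\,,\,\cdot\,)_\alpha$ as the pullback of the genuine inner product on $Z_\alpha$ (hence positive, hence $\mathscr{B}$-positivity), with property (ii) giving fiberwise surjectivity of $\Psi$ and the intertwining being formal. You simply spell out the $\rho$-factor bookkeeping and the remaining verifications that the paper dismisses as ``trivial computations.''
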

\begin{proof}
(ii) and (iii) of Definition \ref{dsiodfwiosdoierwc} imply that $\theta'|B_H$ is a $\ast$-representation of $\mathscr{B}_H$ on $Z_H$.  

For each $\alpha \in G/H$, let $( \ , \ )_{\alpha}$ be the map defined on $\mathscr{L}(\mathcal{L}_{\alpha})$ as in Construction \ref{seriodiofdsiofdsoierw}. Then for any $a, b \in B_{\alpha}$ and $\xi, \eta \in Z_H$, I have
\begin{equation}\begin{split}
(a \otimes \xi, b \otimes \eta)_{\alpha}&=(\theta'_{b^{\ast}a}(\xi), \eta)_{Z_H}
\\&=(\theta'_a(\xi), \theta'_b(\eta))_{Z_{\alpha}}.
\end{split}\end{equation}
Then it is easy to see that $( \ , \ )_{\alpha}$ is positive; and so $^{Z_{H}}({\theta'}|B_H)$ is $\mathscr{B}^{(2)}$-positive. The other parts may be verified by trivial computations.
\end{proof}

$\mathbf{In \ the \ rest \ of \ this \ section, \ we\ fix \ a \ non}$ $\mathbf{degenerate}$ $\mathscr{B}^{(1)}$-$\mathbf{positive}$ $\ast$-$\mathbf{repre}$-$\mathbf{esentation}$ $S$ $\mathbf{of}$ $\mathscr{B}^{(1)}_H$, $\mathbf{and \ write}$ $X$ $\mathbf{for}$ $X(S)$.

\begin{construction}\label{sdijodsojidiodakadqwe}
\rm  Our first goal is to construct from $S$ a Hilbert bundle $\mathscr{Z}$ over
$G/H$. To this end the first step is to construct a Hilbert space $Z_{\alpha}$ for each coset
$\alpha \in G/H$. Let $x \in \alpha \in G/H$. We form the algebraic tensor product $\mathscr{L}({\mathcal{L}_{\alpha}}) \otimes X$, and introduce into it the conjugate-bilinear form $(\ , \ )_{\alpha}$ given by
\begin{equation}\label{sdjioijsdkscweewew}
(\mathfrak{r} \otimes \xi, \mathfrak{s} \otimes \eta)_{\alpha}=\int_H \int_H (\rho(xh)\rho(xk))^{-1/2}(S_ { [\mathfrak{s}(xk), \mathfrak{r}(xh))]_{\mathscr{B}^{(1)}}}\xi, \eta)dvh dvk
\end{equation}
$(\mathfrak{r}, \mathfrak{s} \in \mathscr{L}(\mathcal{L}_{\alpha}); \xi, \eta \in X)$. 


\begin{lma}
$( \ , \ )_{\alpha}$ is positive.
\end{lma}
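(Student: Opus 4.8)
The plan is to deduce positivity of $(\ ,\ )_\alpha$ from the $\mathscr{B}^{(1)}$-positivity of $S$ by routing it through Bridge Lemma I (Proposition \ref{dsiijodsoicserer}): after a change of variables, $(\ ,\ )_\alpha$ will be recognized as the operator inner product on $\mathscr{L}(\mathcal{L})$ attached to $\widetilde S$ and the Bridge $\mathcal{B}_{\mathscr{B}^{(1)}_H, \mathscr{B}^{(2)}}$, evaluated on sections of $\mathcal{L}$ that ``concentrate'' along the coset $\alpha$.

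Fix $x \in \alpha$. It suffices to prove $(u,u)_\alpha \geq 0$ for $u = \sum_{i=1}^n \mathfrak{r}_i \otimes \xi_i$ with $\mathfrak{r}_i \in \mathscr{L}(\mathcal{L}_\alpha)$, $\xi_i \in X$. First I would substitute $h \mapsto kh$ (left Haar on $H$) in the inner integral of \eqref{sdjioijsdkscweewew} and apply Fubini. Since $[\mathfrak{r}_j(xk), \mathfrak{r}_i(xkh_0)]_{\mathscr{B}^{(1)}} \in B^{(1)}_{(xk)^{-1}(xkh_0)} = B^{(1)}_{h_0} \subseteq \mathscr{B}^{(1)}_H$, this rewrites
\[
(u,u)_\alpha = \sum_{i,j=1}^n \big(\widetilde S(\mathsf{b}_{ji})\,\xi_i,\xi_j\big), \qquad \mathsf{b}_{ji}(h_0) = \int_H (\rho(xk)\rho(xkh_0))^{-1/2}[\mathfrak{r}_j(xk),\mathfrak{r}_i(xkh_0)]_{\mathscr{B}^{(1)}}\, d\delta k \in \mathscr{L}(\mathscr{B}^{(1)}_H),
\]
where $\widetilde S$ is the integrated form. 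So everything reduces to $\sum_{i,j}(\widetilde S(\mathsf{b}_{ji})\xi_i,\xi_j) \geq 0$.

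The next step is to realize $\mathsf{b}_{ji}$ as a limit of brackets $[\mathfrak{f}^\lambda_j, \mathfrak{f}^\lambda_i]'$ coming from the Bridge. By a standard Banach-bundle extension argument, extend each $\mathfrak{r}_i$ to a compactly supported $\bar{\mathfrak{r}}_i \in \mathscr{L}(\mathcal{L})$ with $\bar{\mathfrak{r}}_i|_\alpha = \mathfrak{r}_i$. Choose a net $\psi_\lambda \in C_c(G/H)^+$ with $\int_{G/H}\psi_\lambda\, d\nu = 1$ whose supports shrink to $\{\alpha\}$ ($\nu$ the measure on $G/H$ attached to $\rho$), let $q: G \to G/H$ be the quotient map, and set $\mathfrak{f}^\lambda_i = (\psi_\lambda \circ q)^{1/2}\,\rho^{-1}\,\bar{\mathfrak{r}}_i \in \mathscr{L}(\mathcal{L})$ (for $\lambda$ large these have support in one fixed compact set). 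Substituting into the convolution formulas defining $[\ ,\ ]_{\mathscr{L}(\mathscr{B}^{(1)})}$ and then $p^{(1)}$, applying Weil's formula $\int_G F = \int_{G/H}\int_H F(x_\beta h)\rho(x_\beta h)\, d\delta h\, d\nu(\beta)$ with $x_\alpha = x$, and letting $\psi_\lambda$ concentrate at $\alpha$, one checks — this is the computational heart — that the factors $\rho(x\cdot)$, $\Delta$, $\delta$ conspire so that $[\mathfrak{f}^\lambda_j, \mathfrak{f}^\lambda_i]' \to \mathsf{b}_{ji}$ in the inductive-limit topology of $\mathscr{L}(\mathscr{B}^{(1)}_H)$. (The exponent $-1$ on $\rho$ in $\mathfrak{f}^\lambda_i$ is precisely what makes the weight $\rho(x_\beta h)$ of Weil's formula, together with the weight $\Delta(h_0)^{1/2}\delta(h_0)^{-1/2}$ in $p^{(1)}$, cancel against $(\rho(xk)\rho(xkh_0))^{-1/2}$.) Since $\widetilde S$ is continuous for the inductive-limit topology, $\widetilde S([\mathfrak{f}^\lambda_j, \mathfrak{f}^\lambda_i]') \to \widetilde S(\mathsf{b}_{ji})$ strongly, so $\sum_{i,j}(\widetilde S(\mathsf{b}_{ji})\xi_i,\xi_j) = \lim_\lambda \sum_{i,j}(\widetilde S([\mathfrak{f}^\lambda_j, \mathfrak{f}^\lambda_i]')\xi_i,\xi_j)$. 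Now Bridge Lemma I applied to $S$ — and here is where $\mathscr{B}^{(1)}$-positivity is used — gives that $\widetilde S$ is inducible via $\mathcal{B}_{\mathscr{B}^{(1)}_H, \mathscr{B}^{(2)}} = \langle \mathscr{L}(\mathcal{L}), [\ ,\ ]'\rangle$, i.e. $P_{\widetilde S}(\mathfrak{f},\mathfrak{g}) = \widetilde S([\mathfrak{g},\mathfrak{f}]')$ is an operator inner product on $\mathscr{L}(\mathcal{L})$; hence each term $\sum_{i,j}(\widetilde S([\mathfrak{f}^\lambda_j, \mathfrak{f}^\lambda_i]')\xi_i,\xi_j) = \sum_{i,j}(P_{\widetilde S}(\mathfrak{f}^\lambda_i, \mathfrak{f}^\lambda_j)\xi_i,\xi_j) \geq 0$, and passing to the limit yields $(u,u)_\alpha \geq 0$.

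The main obstacle is the third step: constructing the concentrating family $\mathfrak{f}^\lambda_i$ and verifying $[\mathfrak{f}^\lambda_j, \mathfrak{f}^\lambda_i]' \to \mathsf{b}_{ji}$. This is exactly where the rho-function and the two modular functions must all be matched against Weil's formula and the definition of $p^{(1)}$, and where one must ensure the convergence holds in the inductive-limit topology (uniform convergence plus eventual containment of supports in a single compact set) so that $\widetilde S$ may legitimately be applied in the limit. The remaining ingredients — the change of variables of the first step, the Banach-bundle extension of $\mathfrak{r}_i$, the existence of the $\psi_\lambda$, and continuity of $\widetilde S$ for the inductive-limit topology — are routine.
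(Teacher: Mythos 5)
Your proposal is correct and follows essentially the same route as the paper: both reduce positivity of $(\ ,\ )_{\alpha}$ to the positivity of the operator inner product $\mathfrak{f},\mathfrak{g} \mapsto \widetilde{S}([\mathfrak{g},\mathfrak{f}]')$ on $\mathscr{L}(\mathcal{L})$ furnished by Bridge Lemma I (Proposition \ref{dsiijodsoicserer}), by extending the sections from the coset $\alpha$ to all of $G$ and recovering $(\ ,\ )_{\alpha}$ as a limit along a family of functions on $G/H$ concentrating at $\alpha$. Your write-up is in fact more explicit than the paper's about the normalization (the square root of $\psi_\lambda$, the $\rho^{-1}$ weight, Weil's formula, and the inductive-limit-topology convergence needed to apply $\widetilde{S}$), where the paper simply asserts the corresponding limit formula for an approximate unit of $G/H$ around $\alpha$.
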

\begin{proof}
For any $\mathfrak{f}, \mathfrak{g} \in \mathscr{L}(\mathcal{L})$, we have 
\begin{equation}\begin{split}\label{asdijoiojdwjadswe}
&\ \ \ \  (\mathfrak{f} \widetilde{\otimes}\xi, \mathfrak{g}\widetilde{\otimes}\eta)_{X({\rm{Ind}}(S; \mathcal{B} ))}
\\&=(S_{[\mathfrak{g},\mathfrak{f}]_{\mathscr{L}(\mathscr{B}^{(1)})}}\xi, \eta)_X
\\&=\int_{G/H}d\rho^{\sharp}(xH)\int_H\int_H(\rho(xh))\rho(xk))^{-1/2}(S_{[\mathfrak{g}(xh), \mathfrak{f}(xk)]_{\mathscr{B}^{(1)}}}\xi,\eta )dvhdvk. 
\end{split}\end{equation}
Let $\mathfrak{r}, \mathfrak{s} \in \mathscr{L}(\mathcal{L}_{\alpha})$. By \cite[II.14.8]{MR936628}, there are $\mathfrak{f}, \mathfrak{g} \in \mathscr{L}(\mathcal{L})$ such that $\mathfrak{f}|\alpha=\mathfrak{r}$ and $\mathfrak{g}|\alpha=\mathfrak{s}$. Let $\phi_i \in \mathscr{L}(G/H)$ be an approximate unit of $G/H$ around $\alpha$, by (\ref{asdijoiojdwjadswe}) we have
\begin{equation}\label{sdijoioejodsaadqwe}
(\mathfrak{r} \otimes \xi, \mathfrak{s}\otimes \eta)_{\alpha}
={\rm{lim}}_{i \to \infty}((\phi_i \mathfrak{f}) \widetilde{\otimes} \xi, (\phi_i \mathfrak{g}) \widetilde{\otimes} \eta)_{X({\rm{Ind}}(S; \mathcal{B} ))}.
\end{equation}
Combining (\ref{sdijoioejodsaadqwe}), (\ref{asdijoiojdwjadswe}) and Proposition \ref{dsiijodsoicserer}, it is easy to see that $( \ , \ )_{\alpha}$ is positive.
\end{proof}
\begin{nota}
\rm For each $\alpha \in G/H$, let $Z_{\alpha}$ be the Hilbert space resulting by quotient and completion from $\mathscr{L}(\mathscr{Z}_{\alpha}) \otimes X$; and we use the symbol $\iota_{\alpha}$ to denote the quotient map from $\mathscr{L}(\mathscr{Z}_{\alpha}) \otimes X$ into $Z_\alpha$. Let $Z$ be the disjoint union of the $Z_{\alpha} (\alpha \in G/H)$.
\end{nota}
\begin{prop}
There is a unique topology for Z making $Z=\langle Z, \{Z_{\alpha}\}\rangle$ a
Hilbert bundle over $G/H$, which we denote by $\mathscr{Z}$, such that for each $\mathfrak{f}$ in $\mathscr{L}(\mathcal{L})$ and  $\xi$ in $X$ the cross-section
\begin{equation*}
\alpha \mapsto \iota_{\alpha}((\mathfrak{f}|\alpha)\otimes \xi)
\end{equation*}
of $\mathscr{Z}$ is continuous.
\end{prop}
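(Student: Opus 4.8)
The plan is to invoke the standard Fell construction of a Banach bundle from a family of fibers together with a distinguished family of cross-sections (\cite[II.13.18]{MR936628}), so the real content is verifying the hypotheses of that construction for the particular family $\Gamma_0 = \{\alpha \mapsto \iota_\alpha((\mathfrak{f}|\alpha)\otimes\xi) : \mathfrak{f}\in\mathscr{L}(\mathcal{L}),\ \xi\in X\}$. Concretely I would check: (a) $\Gamma_0$ is a linear subspace of $\prod_{\alpha\in G/H} Z_\alpha$ — this is immediate from bilinearity of $\otimes$ and linearity of each $\iota_\alpha$; (b) for each fixed $\alpha$, the set $\{\gamma(\alpha):\gamma\in\Gamma_0\}$ is dense in $Z_\alpha$ — this follows because $\{(\mathfrak{f}|\alpha)\otimes\xi : \mathfrak{f}\in\mathscr{L}(\mathcal{L}),\ \xi\in X\}$ spans the algebraic tensor product $\mathscr{L}(\mathcal{L}_\alpha)\otimes X$, using \cite[II.14.8]{MR936628} to see that every element of $\mathscr{L}(\mathcal{L}_\alpha)$ extends to a global section; (c) for each $\gamma\in\Gamma_0$, the map $\alpha\mapsto\|\gamma(\alpha)\|$ is continuous on $G/H$.

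The crux is clause (c), and more precisely the continuity of the inner product $\alpha \mapsto (\gamma(\alpha),\gamma'(\alpha))_\alpha$ for $\gamma,\gamma'\in\Gamma_0$; once the inner products vary continuously, the norms do too, and the Fell construction then produces the bundle together with the fact that $\Gamma_0$ consists of continuous sections. To prove continuity of $\alpha\mapsto (\iota_\alpha((\mathfrak{f}|\alpha)\otimes\xi),\iota_\alpha((\mathfrak{g}|\alpha)\otimes\eta))_\alpha$, I would write this out via \eqref{sdjioijsdkscweewew} as the double integral
\begin{equation*}
\int_H\int_H (\rho(xh)\rho(xk))^{-1/2}(S_{[\mathfrak{g}(xk),\mathfrak{f}(xh)]_{\mathscr{B}^{(1)}}}\xi,\eta)\,dvh\,dvk,
\end{equation*}
which depends only on the coset $\alpha = xH$, and then argue that this expression is continuous in $x$: the integrand is continuous in $(x,h,k)$ because $\rho$ is a continuous rho-function, the bracket $[\ ,\ ]_{\mathscr{B}^{(1)}}$ is continuous on $\mathcal{L}\times\mathcal{L}$, $\mathfrak{f},\mathfrak{g}$ are continuous sections, and $S$ is continuous; and since $\mathfrak{f},\mathfrak{g}$ have compact support, the $h,k$-integration is over a fixed compact set and the usual dominated-convergence/uniform-continuity argument gives joint continuity, hence continuity in $x$, hence (being $H$-invariant) continuity as a function on $G/H$. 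This is exactly the style of argument used to build the induced Hilbert bundle $\mathscr{Y}$ in Construction \ref{seriodiofdsiofdsoierw}, applied here one level up with $\mathcal{L}$ in place of $\mathscr{B}$.

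Finally, uniqueness of the topology is automatic from the Fell construction: any two bundle topologies on $Z$ for which all of $\Gamma_0$ are continuous sections must agree, since $\Gamma_0$ is fiberwise dense. I expect the main obstacle to be purely bookkeeping in clause (c) — keeping track of the rho-function factors and the $H\times H$ integration, and confirming that the quantity genuinely descends to $G/H$ (i.e. is unchanged under $x\mapsto xh_0$), which uses the defining relation $L_x B_r^{(1)}\subset L_{xr}$ together with the transformation property of $\rho$; none of this is deep, but it must be done carefully so that the bundle $\mathscr{Z}$ is well-defined before it is used in the sequel.
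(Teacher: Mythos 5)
Your argument is correct and is exactly the intended route: the paper offers no proof of this proposition, tacitly treating it as an instance of the Fell--Doran bundle-construction theorem (\cite[II.13.18]{MR936628}), and your verification of fiberwise density via \cite[II.14.8]{MR936628}, continuity of the inner products from the double-integral formula, and independence of the representative $x \in \alpha$ via left-invariance of $dvh$ is precisely what that theorem requires. The only cosmetic point is that $\Gamma_0$ as written is not literally a linear subspace of $\prod_{\alpha}Z_{\alpha}$ (sums of elementary sections need not be elementary), so one should pass to its linear span; your clause (c), being stated for pairwise inner products of members of $\Gamma_0$, already gives continuity of the norm on that span, so nothing further is needed.
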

\end{construction}


\begin{proposition}\label{dooadslksdpoqwepoadslksxaxz}
The induced representation ${\rm{Ind}}(S; \mathcal{B}_{\mathscr{B}^{(1)}_{H}, \mathscr{B}^{(2)}})$ $($acting on $\mathscr{L}(\mathcal{L}) \otimes_S X(S)$$)$ exists; and there is a unique unitary map $E: \mathscr{L}(\mathcal{L}) \otimes_S X(S) \to \mathscr{L}_2(\mathscr{Z})$ satisfying
\begin{equation*}
\mathfrak{f} \widetilde{\otimes}\xi  \mapsto (\alpha \mapsto \iota_{\alpha}((\mathfrak{f}|\alpha) \otimes \xi) \ \ \ \ (\mathfrak{f} \in \mathscr{L}(\mathscr{Z})).
\end{equation*}
In particular, we can define a $\ast$-representation $T$ of $\mathscr{B}^{(2)}$ on $\mathscr{L}_2(\mathscr{Z})$ satisfying 
\begin{equation}\label{sdijoiasioqweoiasdkasdzx}
T_a(\alpha \mapsto \iota_{\alpha}((\mathfrak{f}|\alpha) \otimes \xi))=\alpha  \mapsto \iota_{\alpha}((a\mathfrak{f})|\alpha \otimes \xi);  
\end{equation}
$T$ and ${\rm{Ind}}(S; \mathcal{B}_{\mathscr{B}^{(1)}_{H}, \mathscr{B}^{(2)}})$ are unitarily equivalent implemented by $E$.
\end{proposition}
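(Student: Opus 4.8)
The plan is to obtain everything by transporting, via the map $E$, the abstract induced representation furnished by Bridge Lemma~I (Proposition~\ref{dsiijodsoicserer}) onto the Hilbert bundle $\mathscr{Z}$ built in Construction~\ref{sdijodsojidiodakadqwe}, re-using the inner-product computation already carried out there. The first assertion is then immediate: since $S$ is non-degenerate and $\mathscr{B}^{(1)}$-positive, Proposition~\ref{dsiijodsoicserer} provides the $\ast$-representation ${\rm{Ind}}(S;\mathcal{B}_{\mathscr{B}^{(1)}_{H},\mathscr{B}^{(2)}})$ of $\mathscr{B}^{(2)}$, whose integrated form is the abstract induced representation ${\rm{Ind}}(\widetilde{S};\mathcal{B}_{\mathscr{B}^{(1)}_{H},\mathscr{B}^{(2)}})$ of $\mathscr{L}(\mathscr{B}^{(2)})$; by Construction~\ref{serijcdkjdiucjifdipier} the latter acts on the completion $\mathscr{L}(\mathcal{L})\otimes_S X(S)$ of $\mathscr{L}(\mathcal{L})\otimes X(S)$ for the inner product $(\mathfrak{r}\,\widetilde{\otimes}\,\xi,\mathfrak{s}\,\widetilde{\otimes}\,\eta)=(\widetilde{S}_{[\mathfrak{s},\mathfrak{r}]'}\xi,\eta)$.

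Next I would construct $E$ on the dense linear span of the vectors $\mathfrak{f}\,\widetilde{\otimes}\,\xi$ ($\mathfrak{f}\in\mathscr{L}(\mathcal{L})$, $\xi\in X(S)$) by the stated formula; by the Proposition at the end of Construction~\ref{sdijodsojidiodakadqwe} the assignment $\alpha\mapsto\iota_{\alpha}((\mathfrak{f}|\alpha)\otimes\xi)$ is a continuous cross-section of $\mathscr{Z}$, and it is compactly supported since $\mathfrak{f}$ is, hence lies in $\mathscr{L}(\mathscr{Z})\subset\mathscr{L}_2(\mathscr{Z})$. The key point is that $E$ is isometric: writing $\mathcal{B}$ for $\mathcal{B}_{\mathscr{B}^{(1)}_{H},\mathscr{B}^{(2)}}$, substituting $\mathfrak{r}=\mathfrak{f}|\alpha$ and $\mathfrak{s}=\mathfrak{g}|\alpha$ into the definition~(\ref{sdjioijsdkscweewew}) of $(\ ,\ )_{\alpha}$, comparing with equation~(\ref{asdijoiojdwjadswe}), and relabelling the two $H$-integration variables, one reads off
\[ (\mathfrak{f}\,\widetilde{\otimes}\,\xi,\ \mathfrak{g}\,\widetilde{\otimes}\,\eta)_{X({\rm{Ind}}(S;\mathcal{B}))}=\int_{G/H}(\iota_{\alpha}((\mathfrak{f}|\alpha)\otimes\xi),\ \iota_{\alpha}((\mathfrak{g}|\alpha)\otimes\eta))_{Z_{\alpha}}\,d\rho^{\sharp}(\alpha), \]
whose right-hand side is exactly $(E(\mathfrak{f}\,\widetilde{\otimes}\,\xi),E(\mathfrak{g}\,\widetilde{\otimes}\,\eta))_{\mathscr{L}_2(\mathscr{Z})}$. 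Thus $E$ extends to an isometry into $\mathscr{L}_2(\mathscr{Z})$. To see it is onto, note its range is closed and contains every section $\alpha\mapsto\iota_{\alpha}((\mathfrak{f}|\alpha)\otimes\xi)$; by \cite[II.14.8]{MR936628} each element of $\mathscr{L}(\mathcal{L}_{\alpha})$ is a restriction $\mathfrak{f}|\alpha$, so these sections span a dense subspace of every fibre $Z_{\alpha}$, and multiplying such a section by $\phi\in\mathscr{L}(G/H)$ again produces a section of the same form (the pull-back $\tilde{\phi}$ of $\phi$ to $G$ can be absorbed into $\mathfrak{f}$, since $\tilde{\phi}\mathfrak{f}\in\mathscr{L}(\mathcal{L})$); hence their linear span is already an $\mathscr{L}(G/H)$-module, and being fibrewise dense it is dense in $\mathscr{L}_2(\mathscr{Z})$ by the standard approximation theorem for $L^2$-cross-sections of a Banach bundle (cf.\ \cite[\S II.15]{MR936628}). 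So $E$ is unitary; uniqueness holds because $E$ is prescribed on a dense subspace.

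Finally, I would put $T_a:=E\circ{\rm{Ind}}(S;\mathcal{B}_{\mathscr{B}^{(1)}_{H},\mathscr{B}^{(2)}})_a\circ E^{-1}$ for $a$ in the bundle space of $\mathscr{B}^{(2)}$. Being conjugate by the unitary $E$ to a $\ast$-representation of $\mathscr{B}^{(2)}$, the family $T$ is again a $\ast$-representation of $\mathscr{B}^{(2)}$ on $\mathscr{L}_2(\mathscr{Z})$, unitarily equivalent to ${\rm{Ind}}(S;\mathcal{B}_{\mathscr{B}^{(1)}_{H},\mathscr{B}^{(2)}})$ via $E$ by construction. It remains to verify the explicit formula~(\ref{sdijoiasioqweoiasdkasdzx}); since $E^{-1}(\alpha\mapsto\iota_{\alpha}((\mathfrak{f}|\alpha)\otimes\xi))=\mathfrak{f}\,\widetilde{\otimes}\,\xi$, this reduces to showing that the Fell-bundle representation ${\rm{Ind}}(S;\mathcal{B}_{\mathscr{B}^{(1)}_{H},\mathscr{B}^{(2)}})$ acts on $\mathscr{L}(\mathcal{L})\otimes_S X(S)$ by $a\cdot(\mathfrak{f}\,\widetilde{\otimes}\,\xi)=(a\mathfrak{f})\,\widetilde{\otimes}\,\xi$, where $(a\mathfrak{f})(y)=a\,\mathfrak{f}(x^{-1}y)$ for $a\in B_x^{(2)}$. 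This $a\mathfrak{f}$ is the fibrewise form of the left convolution action of $\mathscr{L}(\mathscr{B}^{(2)})$ on $\mathscr{L}(\mathcal{L})$ (one has $g\cdot\mathfrak{f}=\int_G(g(y)\mathfrak{f})\,dy$ in $\mathscr{L}(\mathcal{L})$), so $a\mapsto(\mathfrak{f}\,\widetilde{\otimes}\,\xi\mapsto(a\mathfrak{f})\,\widetilde{\otimes}\,\xi)$ is a $\ast$-representation of $\mathscr{B}^{(2)}$ whose integrated form is ${\rm{Ind}}(\widetilde{S};\mathcal{B}_{\mathscr{B}^{(1)}_{H},\mathscr{B}^{(2)}})$; by uniqueness of the disintegration of an integrated representation (\cite[\S XI.9]{MR936629}) it coincides with ${\rm{Ind}}(S;\mathcal{B}_{\mathscr{B}^{(1)}_{H},\mathscr{B}^{(2)}})$, and substituting this into the definition of $T_a$ gives~(\ref{sdijoiasioqweoiasdkasdzx}).

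I expect the main obstacle to be the surjectivity of $E$, i.e.\ knowing that the specific continuous sections $\alpha\mapsto\iota_{\alpha}((\mathfrak{f}|\alpha)\otimes\xi)$ used to topologize $\mathscr{Z}$ already generate $\mathscr{L}_2(\mathscr{Z})$; this rests on combining the fibrewise density from \cite[II.14.8]{MR936628} with the Banach-bundle approximation theorem, and on pinning down that the Fell-bundle action (as opposed to the integrated-form action) is the fibrewise translation $a\mathfrak{f}$. By contrast, the isometry of $E$ is essentially contained already in the computations of Construction~\ref{sdijodsojidiodakadqwe}.
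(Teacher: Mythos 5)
Your proposal is correct and follows essentially the same route as the paper: existence from Bridge Lemma I, isometry of $E$ from comparing the fibre form (\ref{sdjioijsdkscweewew}) with (\ref{asdijoiojdwjadswe}), surjectivity from fibrewise density of the sections $\alpha\mapsto\iota_{\alpha}((\mathfrak{f}|\alpha)\otimes\xi)$ (the paper cites \cite[II.14.6]{MR936628} for this), and defining $T_a=E\,{\rm{Ind}}(S;\mathcal{B}_{\mathscr{B}^{(1)}_{H},\mathscr{B}^{(2)}})_a\,E^{\ast}$. You merely spell out in more detail the two steps the paper leaves as routine (the density argument and the identification of the Fell-bundle-level action with $\mathfrak{f}\,\widetilde{\otimes}\,\xi\mapsto(a\mathfrak{f})\,\widetilde{\otimes}\,\xi$).
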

\begin{proof}
Since $S$ is $\mathscr{B}^{(1)}$-positive, then by Proposition \ref{dsiijodsoicserer} ${\rm{Ind}}(S; \mathcal{B}_{\mathscr{B}^{(1)}_{H}, \mathscr{B}^{(2)}})$ exists. 
By (\ref{asdijoiojdwjadswe}) and the construction of $\mathscr{Z}$, for any $\mathfrak{f}, \mathfrak{g} \in \mathscr{L}(\mathcal{L})$ and $\xi, \eta \in X$ we have
\begin{equation*}
(\mathfrak{f} \widetilde{\otimes}\xi, \mathfrak{g} \widetilde{\otimes} \eta)_{X({\rm{Ind}}(S; \mathcal{B} ))}=(\alpha  \mapsto \iota_{\alpha}(\mathfrak{f}|\alpha \otimes \xi), \alpha  \mapsto \iota_{\alpha}(\mathfrak{g}|\alpha \otimes \eta))_{\mathscr{L}_2(\mathscr{Z})}.
\end{equation*}
Therefore the linear map $\mathfrak{f} \widetilde{\otimes}\xi  \mapsto (\alpha \mapsto \iota_{\alpha}((\mathfrak{f}|\alpha) \otimes \xi)$ can be extended to a unitary map $E$ from $\mathscr{L}(\mathcal{L})\otimes_{S}X(S)$ into $\mathscr{L}_2(\mathscr{Z})$; and by \cite[II.14.6]{MR936628} this map is surjective; and it is easy to verify that 
\begin{equation*}
E{\rm{Ind}}(S; \mathcal{B}_{\mathscr{B}^{(1)}_{H}, \mathscr{B}^{(2)}})_aE^{\ast}(\alpha \mapsto \iota_{\alpha}((\mathfrak{f}|\alpha) \otimes \xi))=\alpha \mapsto \iota_{\alpha}(((a\mathfrak{f})|\alpha) \otimes \xi) \ \ \ \ \ (a \in B^{(2)}).
\end{equation*}
So we can define $T_a=E{\rm{Ind}}(S; \mathcal{B}_{\mathscr{B}^{(1)}_{H}, \mathscr{B}^{(2)}})_aE^{\ast}$ for each $a \in B^{(2)}$. Then $a \mapsto T_a$ is a $\ast$-representation of $\mathscr{B}^{(2)}$ satisfying (\ref{sdijoiasioqweoiasdkasdzx}); and $T$ and ${\rm{Ind}}(S; \mathcal{B}_{\mathscr{B}^{(1)}_{H}, \mathscr{B}^{(2)}})$ are unitarily equivalent implemented by $E$.
\end{proof}

\begin{definition}\label{sdjidsijodfjiocxpoierdas}
\rm We say that the Hilbert bundle constructed in Construction \ref{sdijodsojidiodakadqwe} the $Hilbert$ $bundle$ $induced$ $from$ $S$ $via$ $\mathcal{B}_{\mathscr{B}^{(1)}_{H}, \mathscr{B}^{(2)}}$. 
\end{definition}

\begin{notation}\label{dejdoijdsaqwewqe}
\rm Let $F$ be a norm space; $P \subset F$. We use the symbol $\lceil P \rfloor$ to denote the closure of the linear span of $P$.
\end{notation}

\begin{definition}
\rm We say that $\mathcal{L}$ is $\mathbi{$\mathscr{B}^{(2)}$}$-$\mathbi{full}$ $\mathbi{over}$ $\mathbi{G/H}$ if for each $x \in G$ and $\alpha \in G/H$ we have
\begin{equation*}
\ulcorner\{ar: r \in L_{y}, y \in \alpha, a \in B_x\} \lrcorner=\mathcal{L}_{x\alpha}.
\end{equation*}
\end{definition}



By routine computations we can prove the following useful lemma:
\begin{lemma}\label{erwiooisdksqooewew}
The map $\Psi_H: \mathscr{L}(\mathcal{L}_H) \otimes X(S) \to \mathscr{L}(\mathcal{L}_H) \otimes_S X(S)$ defined by
\begin{equation*}
\Psi_H(\mathfrak{r} \otimes \xi)=\rho(e)^{-1}\mathfrak{r}' \widetilde{\otimes} \xi \ \ \ \ (\mathfrak{r} \in \mathscr{L}(\mathcal{L}_H); \ \xi \in X(S)), 
\end{equation*}
where $\mathfrak{r}' \in \mathscr{L}(\mathcal{L}_H)$ is defined by $\mathfrak{r}'(h)=\Delta(h)^{1/2}\delta(h)^{-1/2}\mathfrak{r}(h)$, can be extended to a unitary map from $Z_H$ onto $\mathscr{L}(\mathcal{L}_H) \otimes _S X(S)$. We denote this unitary map still by $\Psi_H$.
\end{lemma}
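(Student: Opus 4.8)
The plan is to show that $\Psi_H$ is a well-defined isometry with dense range, hence extends to a unitary. Recall that $Z_H$ is the completion of $\mathscr{L}(\mathcal{L}_H) \otimes X(S)$ with respect to the form $(\ ,\ )_{eH}$ from Construction \ref{sdijodsojidiodakadqwe} (taking $\alpha = eH$, so $x = e$ and the integrals are over $H$); while $\mathscr{L}(\mathcal{L}_H) \otimes_S X(S)$ is the completion of $\mathscr{L}(\mathcal{L}_H) \otimes X(S)$ with respect to the operator-inner-product form coming from the right $\mathscr{L}(\mathscr{B}^{(1)}_H)$-rigged structure on $\mathscr{L}(\mathcal{L}_H)$, whose $B$-valued pairing is (by the definition preceding the Bridge, restricted to $H$) $[\mathfrak{r},\mathfrak{s}]'(h) = p^{(1)}([\mathfrak{r},\mathfrak{s}]_{\mathscr{L}(\mathscr{B}^{(1)})})(h)$. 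So the whole lemma reduces to comparing two conjugate-bilinear forms on the same algebraic tensor product, and checking that the rescaling $\mathfrak{r} \mapsto \rho(e)^{-1}\mathfrak{r}'$ intertwines them.

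First I would write out both forms explicitly. On the $Z_H$ side, $(\mathfrak{r}\otimes\xi, \mathfrak{s}\otimes\eta)_{eH} = \int_H\int_H (\rho(h)\rho(k))^{-1/2}(S_{[\mathfrak{s}(k),\mathfrak{r}(h)]_{\mathscr{B}^{(1)}}}\xi,\eta)\,dvh\,dvk$. On the $\mathscr{L}(\mathcal{L}_H)\otimes_S X(S)$ side, $(\mathfrak{r}\,\widetilde{\otimes}\,\xi, \mathfrak{s}\,\widetilde{\otimes}\,\eta) = (S_{[\mathfrak{s},\mathfrak{r}]'(e)}\xi,\eta) = (\widetilde S_{p^{(1)}([\mathfrak{s},\mathfrak{r}]_{\mathscr{L}(\mathscr{B}^{(1)})})}\xi,\eta)$, where $\widetilde S$ is the integrated form. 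Unwinding the convolution-type formula for $[\ ,\ ]_{\mathscr{L}(\mathscr{B}^{(1)})}$, the conditional expectation $p^{(1)}$ from (\ref{esiodkdoieodaxs}), and the fact that the integrated form $\widetilde S$ of $S$ acting on $\phi \in \mathscr{L}(\mathscr{B}^{(1)}_H)$ is $\int_H S_{\phi(h)}\,dvh$ against the Haar measure of $H$, I expect the rho-function weights $(\Delta/\delta)^{1/2}$ introduced by $\mathfrak{r}\mapsto\mathfrak{r}'$ together with the rho-function factors $\rho(h)^{-1/2}$ already present in the $(\ ,\ )_{eH}$ formula, and the single overall $\rho(e)^{-1}$, to match the modular-function bookkeeping in $p^{(1)}$ exactly, so that both forms agree on $\Psi_H$-images. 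This is the ``routine computation'' alluded to in the statement, and it is where one has to be careful about which measure ($dv$ versus $d\delta$) and which square roots appear; no conceptual difficulty, but it is the heart of the argument.

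Once the two forms are matched, it follows that $\Psi_H$ is a well-defined linear isometry from a dense subspace of $Z_H$ into $\mathscr{L}(\mathcal{L}_H)\otimes_S X(S)$ — well-defined because it carries null vectors to null vectors (as the forms agree), and isometric for the same reason — so it extends to an isometry of the completions. For surjectivity: the map $\mathfrak{r}\mapsto \rho(e)^{-1}\mathfrak{r}'$ is a bijection of $\mathscr{L}(\mathcal{L}_H)$ onto itself (its inverse is $\mathfrak{s}\mapsto \rho(e)\Delta(h)^{-1/2}\delta(h)^{1/2}\mathfrak{s}(h)$), and finite sums $\sum \mathfrak{r}_i\,\widetilde{\otimes}\,\xi_i$ are dense in $\mathscr{L}(\mathcal{L}_H)\otimes_S X(S)$ by construction; hence the range of $\Psi_H$ is dense, and a dense-range isometry is unitary. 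The main obstacle, as noted, is purely the modular-function and rho-function normalization in the computation of step two — in particular verifying that $\rho(e)^{-1}$ together with the weight $\Delta(h)^{1/2}\delta(h)^{-1/2}$ in the definition of $\mathfrak{r}'$ is exactly the factor needed to reconcile the ``symmetric'' double-integral form defining $Z_H$ with the ``one-sided'' conditional-expectation form defining $\mathscr{L}(\mathcal{L}_H)\otimes_S X(S)$; everything else is formal.
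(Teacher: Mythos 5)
Your strategy --- write out the two conjugate-bilinear forms on the algebraic tensor product $\mathscr{L}(\mathcal{L}_H)\otimes X(S)$, check that the rescaling intertwines them (hence $\Psi_H$ is well defined and isometric), and get surjectivity from the fact that $\mathfrak{r}\mapsto\rho(e)^{-1}\mathfrak{r}'$ is a bijection of $\mathscr{L}(\mathcal{L}_H)$ with dense image of elementary tensors --- is the right one, and it is in fact the only content here: the paper offers no proof at all, merely the remark that the lemma follows ``by routine computations,'' so you are supplying exactly what is omitted. One point needs correcting, though. The form defining $\mathscr{L}(\mathcal{L}_H)\otimes_S X(S)$ is \emph{not} obtained from the Bridge pairing $[\ ,\ ]'=p^{(1)}\circ[\ ,\ ]_{\mathscr{L}(\mathscr{B}^{(1)})}$: that pairing is defined on sections over all of $G$, and an element of $\mathscr{L}(\mathcal{L}_H)$ is not in its domain. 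The relevant rigged structure (the one used in Theorem \ref{sdoiewiosdsdwesxdsdc}, where ${\rm{Ind}}(S;\mathscr{L}(\mathcal{L}_H))$ is an induction from $\mathscr{B}^{(1)}_H$ to $\mathscr{B}^{(2)}_H$) is the intrinsic one on the restricted bundle, namely
\begin{equation*}
[\mathfrak{r},\mathfrak{s}]_{\mathscr{L}(\mathscr{B}^{(1)}_H)}(h)=\int_H[\mathfrak{r}(k),\mathfrak{s}(k^{-1}h)]_{\mathscr{B}^{(1)}}\,dvk ,
\end{equation*}
so that after a left-translation in the inner integral the second form becomes $\int_H\int_H(S_{[\mathfrak{s}(k),\mathfrak{r}(h)]_{\mathscr{B}^{(1)}}}\xi,\eta)\,dvh\,dvk$ with no modular weights and no conditional expectation. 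The identity that then does all the work is the defining property of the rho-function, $\rho(xh)=\rho(x)\,\delta(h)/\Delta(h)$, which gives $(\rho(h)\rho(k))^{-1/2}=\rho(e)^{-1}(\Delta(h)/\delta(h))^{1/2}(\Delta(k)/\delta(k))^{1/2}$ and shows that the weight $\Delta^{1/2}\delta^{-1/2}$ in $\mathfrak{r}\mapsto\mathfrak{r}'$ is precisely what converts one form into the other. Since you rightly identify the normalization as ``the heart of the argument,'' do carry it out: as stated the prefactor $\rho(e)^{-1}$ makes $\Psi_H$ an isometry only when $\rho(e)=1$ (otherwise the two inner products differ by the scalar $\rho(e)^{-1}$ and the correct prefactor is $\rho(e)^{-1/2}$), a normalization the paper leaves implicit when it fixes $\rho$.
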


\begin{theorem}\label{sdoiewiosdsdwesxdsdc}
$(${\rm{Bridge Lemma II}}$)$ If $\mathcal{L}$ is $\mathscr{B}^{(2)}$-full over $G/H$, then the $\ast$-$representat$-ion $T$ defined in Construction \ref{dooadslksdpoqwepoadslksxaxz} on $\mathscr{L}_2(\mathscr{Z})$ is an integrated representation of $\mathscr{B}^{(2)}$; and ${\rm{Ind}}(S; \mathscr{L}(\mathcal{L}_{H}))$ exists and is $\mathscr{B}^{(2)}$-positive; and we have
\begin{equation}\label{sowqoiszswwxcf}
{\rm{Ind}}_{\mathscr{B}_H^{(2)} \uparrow \mathscr{B}^{(2)}}({\rm{Ind}}(S; \mathscr{L}(\mathcal{L}_{H})))\simeq{\rm{Ind}}(S; \mathcal{B}_{\mathscr{B}_H^{(1)}, \mathscr{B}^{(1)}}).
\end{equation}
\end{theorem}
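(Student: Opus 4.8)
The plan is to recognise the representation $T$ of Proposition~\ref{dooadslksdpoqwepoadslksxaxz} --- which by that proposition is a concrete model of ${\rm Ind}(S;\mathcal{B}_{\mathscr{B}^{(1)}_{H},\mathscr{B}^{(2)}})$, acting on $\mathscr{L}_2(\mathscr{Z})$ through the fibred formula (\ref{sdijoiasioqweoiasdkasdzx}) --- as an \emph{integrated} representation of $\mathscr{B}^{(2)}$ in the sense of Definition~\ref{sdiosdoisdopiwep9dkopsx}, and then to run it through Lemma~\ref{dsiodfwiosdoierwc}. Concretely the steps are: (a) exhibit a derivative $T'$ for $T$ and verify conditions (i)--(iii) of Definition~\ref{sdiosdoisdopiwep9dkopsx}, the $\mathscr{B}^{(2)}$-fullness hypothesis entering only here; (b) apply Lemma~\ref{dsiodfwiosdoierwc} (with $\mathscr{B}=\mathscr{B}^{(2)}$) to conclude that $R:={}^{Z_H}(T'|B^{(2)}_H)$ is a $\mathscr{B}^{(2)}$-positive $\ast$-representation of $\mathscr{B}^{(2)}_H$ on $Z_H$ and that $T\simeq{\rm Ind}_{\mathscr{B}^{(2)}_H\uparrow\mathscr{B}^{(2)}}(R)$; (c) identify $R$ with ${\rm Ind}(S;\mathscr{L}(\mathcal{L}_H))$ by transporting along the unitary $\Psi_H$ of Lemma~\ref{erwiooisdksqooewew}; (d) splice the unitary equivalences together to get (\ref{sowqoiszswwxcf}).

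For (a), the candidate derivative is the family of maps $T'_a:Z\to Z$, $a\in B^{(2)}_x$, determined on generating vectors by $T'_a\bigl(\iota_\alpha((\mathfrak g|\alpha)\otimes\xi)\bigr)=\iota_{x\alpha}\bigl((a\mathfrak g)|x\alpha\otimes\xi\bigr)$, where $(a\mathfrak g)(z):=a\,\mathfrak g(x^{-1}z)$ is the left action of the bundle element $a$ on a cross-section; with this choice condition (i) of Definition~\ref{sdiosdoisdopiwep9dkopsx} holds on generators by direct comparison with (\ref{sdijoiasioqweoiasdkasdzx}), and then on all of $\mathscr{L}(\mathscr{Z})$ by the density of finite sums of generating sections (as already used for the surjectivity of $E$ in Proposition~\ref{dooadslksdpoqwepoadslksxaxz}). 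That $T'_a$ passes to the quotient $Z_\alpha$ and extends to a bounded map $Z_\alpha\to Z_{x\alpha}$ follows from the estimate $(a\mathfrak g\otimes\xi,a\mathfrak g\otimes\xi)_{x\alpha}\le\|a\|^2(\mathfrak g\otimes\xi,\mathfrak g\otimes\xi)_\alpha$, which one obtains from the localisation formula (\ref{sdijoioejodsaadqwe}) --- choosing a global section restricting to $\mathfrak g$ on $\alpha$ and pushing the approximate unit through the left action --- together with the fact that ${\rm Ind}(S;\mathcal{B}_{\mathscr{B}^{(1)}_{H},\mathscr{B}^{(2)}})$ is a $\ast$-representation of $\mathscr{B}^{(2)}$ (Proposition~\ref{dsiijodsoicserer}), so that the operator attached to $a$ has norm at most $\|a\|$. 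Condition (iii) is a routine continuity check from the continuity of the structure maps of $\mathcal{L}$ and the defining property of the topology of $\mathscr{Z}$. Condition (ii) is the essential point: it demands, for each $x\in G$, that the closed linear span of $\{T'_a(\xi):a\in B^{(2)}_{xH},\ \xi\in Z_H\}$ be all of $Z_{xH}$; since $T'_a(\iota_H(\mathfrak g\otimes\xi))=\iota_{xH}((a\mathfrak g)\otimes\xi)$ and $\iota_{xH}$ is a continuous surjection with $X(S)$ dense, this reduces to the totality of $\{a\mathfrak g:a\in B^{(2)}_y,\ y\in xH,\ \mathfrak g\in\mathscr{L}(\mathcal{L}_H)\}$ in $\mathscr{L}(\mathcal{L}_{xH})$, which is exactly what the hypothesis that $\mathcal{L}$ be $\mathscr{B}^{(2)}$-full over $G/H$ provides (after a partition of unity argument to pass from the fibrewise statement of fullness to density of cross-sections).

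Granting (a), Lemma~\ref{dsiodfwiosdoierwc} gives at once that $R={}^{Z_H}(T'|B^{(2)}_H)$ is a $\mathscr{B}^{(2)}$-positive $\ast$-representation of $\mathscr{B}^{(2)}_H$ and $T\simeq{\rm Ind}_{\mathscr{B}^{(2)}_H\uparrow\mathscr{B}^{(2)}}(R)$. On the other hand, the positivity of $(\,,\,)_H$ proved in Construction~\ref{sdijodsojidiodakadqwe}, combined with the change of variables underlying Lemma~\ref{erwiooisdksqooewew}, shows $\widetilde S$ is $\mathscr{L}(\mathcal{L}_H)$-positive, so that $\mathscr{L}(\mathcal{L}_H)\otimes_S X(S)$ is defined and $\Psi_H:Z_H\to\mathscr{L}(\mathcal{L}_H)\otimes_S X(S)$ is a well-defined unitary. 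A direct computation with the formula for $\Psi_H$ (bookkeeping the $\rho(e)^{-1}$ and $\Delta^{1/2}\delta^{-1/2}$ factors) shows that $\Psi_H$ conjugates $R$ to a $\ast$-representation of $\mathscr{B}^{(2)}_H$ whose integrated form is the left $\mathscr{L}(\mathscr{B}^{(2)}_H)$-action on $\mathscr{L}(\mathcal{L}_H)\otimes_S X(S)$; in particular that action is bounded, so $\widetilde S$ is $\mathscr{L}(\mathscr{B}^{(2)}_H)$-bounded and ${\rm Ind}(S;\mathscr{L}(\mathcal{L}_H))$ exists, equals $\Psi_H R\Psi_H^{\ast}$, and --- being unitarily equivalent to $R$ --- is $\mathscr{B}^{(2)}$-positive. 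Finally, since induction from $H$ to $G$ respects unitary equivalence of the inducing representation,
\[
{\rm Ind}(S;\mathcal{B}_{\mathscr{B}^{(1)}_{H},\mathscr{B}^{(2)}})=T\simeq{\rm Ind}_{\mathscr{B}^{(2)}_H\uparrow\mathscr{B}^{(2)}}(R)\simeq{\rm Ind}_{\mathscr{B}^{(2)}_H\uparrow\mathscr{B}^{(2)}}\bigl({\rm Ind}(S;\mathscr{L}(\mathcal{L}_H))\bigr),
\]
which is the asserted equivalence (\ref{sowqoiszswwxcf}) (with $\mathcal{B}_{\mathscr{B}^{(1)}_{H},\mathscr{B}^{(2)}}$ understood on its right-hand side).

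The main obstacle is step (a), and inside it the two points that are not mere bookkeeping: first, that $T'_a$ is well defined and bounded \emph{fibrewise} --- the fibres $Z_\alpha$ are themselves quotients-then-completions, so one cannot simply restrict the global operator $T_a$, and one must go through the localisation formula (\ref{sdijoioejodsaadqwe}); second, condition (ii) of Definition~\ref{sdiosdoisdopiwep9dkopsx}, where the $\mathscr{B}^{(2)}$-fullness of $\mathcal{L}$ over $G/H$ must be translated from its fibrewise form into a density statement about cross-sections in $\mathscr{L}(\mathcal{L}_{xH})$ and thence about the generating vectors of $Z_{xH}$. Once (a) is secured, (b)--(d) are essentially formal, the only genuine content being the compatibility computation for $\Psi_H$, which is of the same routine character as Lemma~\ref{erwiooisdksqooewew} itself.
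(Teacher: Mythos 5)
Your proposal follows essentially the same route as the paper: you construct the same fibrewise derivative (the paper's $t^{\alpha}_a$), establish its boundedness by localising via the approximate unit on $G/H$ and the global boundedness of $T_a$ from Proposition \ref{dooadslksdpoqwepoadslksxaxz}, use the $\mathscr{B}^{(2)}$-fullness hypothesis exactly and only for condition (ii) of Definition \ref{sdiosdoisdopiwep9dkopsx}, and then conclude via Lemma \ref{dsiodfwiosdoierwc} together with the unitary $\Psi_H$ of Lemma \ref{erwiooisdksqooewew}. The argument is correct (and your explicit $\|a\|^2$ bound in the fibrewise estimate is slightly more careful than the paper's displayed inequality).
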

\begin{proof}
For each $a \in B^{(2)}$ and $\alpha \in G/H$, we define $t^{\alpha}_a: \mathscr{L}(\mathcal{L}_{\alpha}) \otimes X \to \mathscr{L}(\mathcal{L}_{\pi(a)\alpha}) \otimes X$ by
\begin{equation*}
t^{\alpha}_a(\mathfrak{r} \otimes \xi)=a\mathfrak{r} \otimes \xi \ \ \ \ (\mathfrak{r} \in \mathscr{L}(\mathcal{L}_{\alpha}), \xi \in X),
\end{equation*}
where $a\mathfrak{r} \in \mathscr{L}(\mathcal{L}_{\pi(a)\alpha}) $ is defined by
\begin{equation*}
(a\mathfrak{r})(\pi(a)x)=a\mathfrak{r}(x) \ \ \ \ (x \in \alpha).
\end{equation*}
Let $\psi_w: G/H \to \mathbb{R}$ be a net of approximation unit of $G/H$ around $\alpha$. For $\sum_{i=1}^na\mathfrak{r}_i \otimes \xi_i \in \mathscr{L}(\mathcal{L}_{\alpha}) \otimes X$, let $\mathfrak{f}_i \in \mathscr{L}(\mathcal{L})$ such that $\mathfrak{f}_i|\alpha=\mathfrak{r}_i$; we have
\begin{equation*}\begin{split}
&\ \ \ \ \|\sum_{i=1}^na\mathfrak{r}_i \otimes \xi_i\|_{Y_{\pi(a)\alpha}}^2
\\&=\sum_{i,j=1}^n\int_H \int_H (\rho(\pi(a)xh)\rho(\pi(a)xk))^{-1/2}(S_{[(a\mathfrak{r}_i)(\pi(a)xk), (a\mathfrak{r}_j)(\pi(a)xh))] }\xi_j, \xi_i)dvh dvk
\\&=\sum_{i,j=1}^n\int_H \int_H (\rho(\pi(a)xh)\rho(\pi(a)xk))^{-1/2}(S_{[(a \cdot \mathfrak{f}_i)(\pi(a)xk), (a \cdot \mathfrak{f}_j)(\pi(a)xh))] }\xi_j, \xi_i)dvh dvk
\\&= {\rm{lim}}_{w \to \infty}( \sum_{i,j=1}^n\int_{G/H}d\rho^{\sharp}(\pi(a)xH)  
 \\& \ \ \ \ \ \ \ \ \ \int_H \int_H (\rho(\pi(a)xh)\rho(\pi(a)xk))^{-1/2}(S_{[(a \cdot \psi_w\mathfrak{f}_i)(\pi(a)xk), (a \cdot \psi_w\mathfrak{f}_j)(\pi(a)xh))] }\xi_j, \xi_i)
 \\& \ \ \ \ \ \ \ \ \ \ \ \ \ \ \ \ \ \ \ \ dvhdvk) 
\\&= {\rm{lim}}_{w \to \infty}\|T_a(\sum_{i=1}^n\psi_w\mathfrak{f}_i \otimes \xi_i)\|^2_{Y_{\alpha}}
\\& \leq {\rm{lim}}_{w \to \infty}\|\sum_{i=1}^n\psi_w\mathfrak{f}_i \otimes \xi_i\|^2_{Y_{\alpha}}
\\&={\rm{lim}}_{w \to \infty}( \sum_{i,j=1}^n\int_{G/H}d\rho^{\sharp}(xH)  
\\& \ \ \ \ \ \ \ \ \ \ \ \int_H \int_H (\rho(xh)\rho(xk))^{-1/2}(S_{[(\psi_w\mathfrak{f}_i)(xk), (\psi_w\mathfrak{f}_j)(xh))] }\xi_j, \xi_i)dvh dvk)
\\&=\int_H \int_H (\rho(xh)\rho(xk))^{-1/2}(S_{[\mathfrak{f}_i(xk), \mathfrak{f}_j(xh))] }\xi_j, \xi_i)dvh dvk
\\&=\|\sum_{i=1}^n\mathfrak{r}_i \otimes \xi_i\|^2_{Y_{\alpha}}.
\end{split}\end{equation*}
Therefore, $t^{\alpha}_a$ can be extended to a bounded linear operator from $Z_{\alpha}$ into $Z_{\pi(a)\alpha}$. Since $\mathcal{L}$ is $\mathscr{B}^{(2)}$-full over $G/H$, it is easy to prove that the linear span of $\{t^{\alpha}_a(\mathsf{f}): a \in B^{(2)}_{xH}, \ \mathsf{f}\in Z_{\alpha}\}$ is dense in $Z_{x\alpha}$ by the aid of \cite[II.14.1]{MR936628}. Let $t_a: Z \to Z$ be the unique map satisfying $t_a|Z_{\alpha}=t_a^{\alpha}$ for any $\alpha \in G/H$. Then $t$ satisfies (ii) of Definition \ref{dsiodfwiosdoierwc}. For each $\mathfrak{f} \in \mathscr{L}(\mathcal{L})$, let $\mathfrak{f}^{\xi}$ be the cross-section of $\mathscr{L}(\mathscr{Z})$ defined by $\alpha \mapsto \mathfrak{f}|\alpha \otimes \xi$; we have
\begin{equation}\label{edjdsajadfiucxkjadiodiosad}
T_a(\mathfrak{f}^{\xi})(\alpha)=t_a(\mathfrak{f}^{\xi}(\pi(a)^{-1}\alpha)) \ \ \ \ \ (a \in B^{(2)}; \ \alpha \in G/H);
\end{equation}
but since the linear span of $\{\mathfrak{f}^{\xi}: \mathfrak{f} \in \mathscr{L}(\mathcal{L}), \xi \in X\}$ is dense in the induct limit topology of $\mathscr{L}(\mathscr{Z})$, hence for any $\mathsf{f} \in \mathscr{L}(\mathscr{Z})$ we have 
\begin{equation*}
T_a(\mathsf{f})(\alpha)=t_a(\mathsf{f}(\pi(a)^{-1}\alpha)) \ \ \ \ (a \in B^{(2)}; \ \alpha \in G/H).
\end{equation*}
So (i) of Definition \ref{dsiodfwiosdoierwc} is fulfilled. (iii) of Definition \ref{dsiodfwiosdoierwc} is an easy consequence of the continuity of the action of $\mathscr{B}^{(2)}$ on $\mathcal{L}$ and the definition of the topology of the Hilbert bundle $\mathscr{Z}$. Therefore, $T$ is an integrated representation of $\mathscr{B}^{(2)}$ on $\mathscr{Z}$ with the derivative $t$.

For each $a \in \mathscr{B}_H^{(2)}$ let $S'_a$ be the bounded operator $t_a|Z_H: Z_H \to Z_H$; then it is easy to verify that $a \mapsto S'_a$ is a $\ast$-representation of $\mathscr{B}_H^{(2)}$ on the Hilbert space $Z_H$. Let $\Psi_H$ be the unitary map defined as in Lemma \ref{erwiooisdksqooewew}, we have
\begin{equation*}
\Psi_H S'_a \Psi_H^{-1}(\mathfrak{r} \widetilde{\otimes}\xi)=a\mathfrak{r} \widetilde{\otimes} \xi \ \ \ \ (a \in B_H^{(2)}; \ \mathfrak{r} \in \mathscr{L}(\mathcal{L}_H); \ \xi \in X(S)).
\end{equation*}  
Therefore, ${\rm{Ind}}(S; \mathscr{L}(\mathcal{L}_H))$ exists and is unitarily equivalent to $S'$; and by Lemma \ref{dsiodfwiosdoierwc} ${\rm{Ind}}(S; \mathscr{L}(\mathcal{L}_H))$ is $\mathscr{B}^{(2)}$-positive. By Proposition \ref{dooadslksdpoqwepoadslksxaxz} and Lemma \ref{dsiodfwiosdoierwc} we have (\ref{sowqoiszswwxcf}). Our proof is complete.
\end{proof}

\subsection{Bridge Lemma: Path III}
The following theorem is our main result of this section, which is a combination of Proposition \ref{dsiijodsoicserer} and Theorem \ref{sdoiewiosdsdwesxdsdc}.

\begin{theorem}\label{weioweiordeddsaf}
Suppose $\mathcal{L}$ is $\mathscr{B}^{(2)}$-full over $G/H$. If $S$ is a $\mathscr{B}^{(1)}$-positive non-degenerate $\ast$-representation of $\mathscr{B}_H^{(1)}$, then ${\rm{Ind}}(S; \mathscr{L}(\mathcal{L}_H))$ exists and is $\mathscr{B}^{(2)}$-positive; and we have
\begin{equation*}
{\rm{Ind}}({\rm{Ind}}_{\mathscr{B}_H^{(1)} \uparrow \mathscr{B}^{(1)}}(S); \mathscr{L}(\mathcal{L})) \simeq {\rm{Ind}}_{\mathscr{B}_H^{(2)} \uparrow \mathscr{B}^{(2)}}({\rm{Ind}}(S; \mathscr{L}(\mathcal{L}_H))).
\end{equation*}
\end{theorem}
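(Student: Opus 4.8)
The plan is to derive Theorem \ref{weioweiordeddsaf} by chaining together the two already-established Bridge Lemmas so that no new computation is needed. The key observation is that both \eqref{sdioiosdkosddsweew} of Proposition \ref{dsiijodsoicserer} and \eqref{sowqoiszswwxcf} of Theorem \ref{sdoiewiosdsdwesxdsdc} compute the \emph{same} object, namely ${\rm{Ind}}(S; \mathcal{B}_{\mathscr{B}^{(1)}_H, \mathscr{B}^{(2)}})$, up to unitary equivalence. So the strategy is: first invoke Proposition \ref{dsiijodsoicserer} to get that $\widetilde{S}$ is inducible via $\mathcal{B}_{\mathscr{B}^{(1)}_H, \mathscr{B}^{(2)}}$ and that the left-hand side of \eqref{sdioiosdkosddsweew}, namely ${\rm{Ind}}({\rm{Ind}}_{\mathscr{B}^{(1)}_H \uparrow \mathscr{B}^{(1)}}(S); \langle \mathscr{L}(\mathcal{L}), [\ ,\ ]_{\mathscr{L}(\mathscr{B}^{(2)})}\rangle)$, is unitarily equivalent to ${\rm{Ind}}(S; \mathcal{B}_{\mathscr{B}^{(1)}_H, \mathscr{B}^{(2)}})$. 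Then invoke Theorem \ref{sdoiewiosdsdwesxdsdc}, whose hypothesis that $\mathcal{L}$ is $\mathscr{B}^{(2)}$-full over $G/H$ is exactly what we are assuming; this gives both that ${\rm{Ind}}(S; \mathscr{L}(\mathcal{L}_H))$ exists and is $\mathscr{B}^{(2)}$-positive, and that ${\rm{Ind}}_{\mathscr{B}_H^{(2)} \uparrow \mathscr{B}^{(2)}}({\rm{Ind}}(S; \mathscr{L}(\mathcal{L}_H)))$ is unitarily equivalent to ${\rm{Ind}}(S; \mathcal{B}_{\mathscr{B}_H^{(1)}, \mathscr{B}^{(2)}})$.

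The second and third sentences of the conclusion of the theorem are then immediate: the existence and $\mathscr{B}^{(2)}$-positivity of ${\rm{Ind}}(S; \mathscr{L}(\mathcal{L}_H))$ is literally part of the conclusion of Theorem \ref{sdoiewiosdsdwesxdsdc}, and the displayed unitary equivalence follows by transitivity of $\simeq$: both sides of the desired equivalence are equivalent to the common object ${\rm{Ind}}(S; \mathcal{B}_{\mathscr{B}_H^{(1)}, \mathscr{B}^{(2)}})$. I should be slightly careful about one bookkeeping point: Proposition \ref{dsiijodsoicserer} is phrased in terms of $\mathcal{B}_{\mathscr{B}^{(1)}_H, \mathscr{B}^{(2)}}$ while Theorem \ref{sdoiewiosdsdwesxdsdc} in its displayed formula \eqref{sowqoiszswwxcf} writes $\mathcal{B}_{\mathscr{B}_H^{(1)}, \mathscr{B}^{(1)}}$ — this appears to be a typo in \eqref{sowqoiszswwxcf} for $\mathcal{B}_{\mathscr{B}_H^{(1)}, \mathscr{B}^{(2)}}$, since the whole of Section 2.2 is built around the bridge to $\mathscr{B}^{(2)}$ and the Hilbert bundle $\mathscr{Z}$ constructed from $S$ via $\mathcal{B}_{\mathscr{B}^{(1)}_H, \mathscr{B}^{(2)}}$; I would silently read it as $\mathscr{B}^{(2)}$ when applying it, and perhaps flag this identification explicitly so the chaining is unambiguous.

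There is really no substantive obstacle here, since all the analytic work — the positivity of the forms $(\ ,\ )_\alpha$, the $C$-boundedness arguments, the construction of the unitary $E$, the identification of $T$ as an integrated representation — has already been carried out in Proposition \ref{dsiijodsoicserer}, Proposition \ref{dooadslksdpoqwepoadslksxaxz}, and Theorem \ref{sdoiewiosdsdwesxdsdc}. The only thing to watch is that the unitary equivalences being composed are genuinely composable, i.e.\ that the intermediate Hilbert space appearing on the right of \eqref{sdioiosdkosddsweew} and on the right of \eqref{sowqoiszswwxcf} is the same space $X({\rm{Ind}}(S; \mathcal{B}_{\mathscr{B}^{(1)}_H, \mathscr{B}^{(2)}})) = \mathscr{L}(\mathcal{L}) \otimes_S X(S)$; this is clear from the statements. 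Hence the proof is three lines: apply Proposition \ref{dsiijodsoicserer}, apply Theorem \ref{sdoiewiosdsdwesxdsdc}, and concatenate.

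\begin{proof}
Since $S$ is $\mathscr{B}^{(1)}$-positive, Proposition \ref{dsiijodsoicserer} applies and yields
\begin{equation*}
{\rm{Ind}}({\rm{Ind}}_{\mathscr{B}_H^{(1)} \uparrow \mathscr{B}^{(1)}}(S); \langle \mathscr{L}(\mathcal{L}), [ \ , \ ]_{\mathscr{L}(\mathscr{B}^{(2)})} \rangle) \simeq {\rm{Ind}}(S; \mathcal{B}_{\mathscr{B}^{(1)}_H, \mathscr{B}^{(2)}}).
\end{equation*}
On the other hand, since $\mathcal{L}$ is $\mathscr{B}^{(2)}$-full over $G/H$, Theorem \ref{sdoiewiosdsdwesxdsdc} applies (with $\mathcal{B}_{\mathscr{B}_H^{(1)}, \mathscr{B}^{(2)}}$ in place of the bundle written in \eqref{sowqoiszswwxcf}): it shows that ${\rm{Ind}}(S; \mathscr{L}(\mathcal{L}_H))$ exists and is $\mathscr{B}^{(2)}$-positive, and that
\begin{equation*}
{\rm{Ind}}_{\mathscr{B}_H^{(2)} \uparrow \mathscr{B}^{(2)}}({\rm{Ind}}(S; \mathscr{L}(\mathcal{L}_H))) \simeq {\rm{Ind}}(S; \mathcal{B}_{\mathscr{B}_H^{(1)}, \mathscr{B}^{(2)}}).
\end{equation*}
Both displayed equivalences have the same right-hand side, acting on $\mathscr{L}(\mathcal{L}) \otimes_S X(S)$, so by transitivity of unitary equivalence
\begin{equation*}
{\rm{Ind}}({\rm{Ind}}_{\mathscr{B}_H^{(1)} \uparrow \mathscr{B}^{(1)}}(S); \mathscr{L}(\mathcal{L})) \simeq {\rm{Ind}}_{\mathscr{B}_H^{(2)} \uparrow \mathscr{B}^{(2)}}({\rm{Ind}}(S; \mathscr{L}(\mathcal{L}_H))),
\end{equation*}
which is the assertion.
\end{proof}
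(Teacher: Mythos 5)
Your proof is correct and is exactly what the paper intends: the paper gives no separate proof of Theorem \ref{weioweiordeddsaf}, stating only that it is ``a combination of Proposition \ref{dsiijodsoicserer} and Theorem \ref{sdoiewiosdsdwesxdsdc},'' and your chaining of the two unitary equivalences through the common object ${\rm{Ind}}(S;\mathcal{B}_{\mathscr{B}^{(1)}_H,\mathscr{B}^{(2)}})$ is precisely that combination. Your reading of $\mathcal{B}_{\mathscr{B}^{(1)}_H,\mathscr{B}^{(1)}}$ in \eqref{sowqoiszswwxcf} as a typo for $\mathcal{B}_{\mathscr{B}^{(1)}_H,\mathscr{B}^{(2)}}$ is also the right call.
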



\section{Application: An Investigation on the Fell-Doran Imprimitivity Theorem}


\subsection{A Remark}

In order to apply Theorem \ref{weioweiordeddsaf}, we need to know the conditions which imply that $\mathcal{L}$ is $\mathscr{B}^{(2)}$-full over $G/H$.

\begin{definition}
\rm We say that $\mathscr{B}$ is $H$-$saturated$ if for each $x \in G$ we have
\begin{equation*}
B_e \subset \lceil B_{xH}B^{\ast}_{xH} \rfloor.
\end{equation*}

\end{definition}

\begin{lemma}\label{dsiodiojcewqs}
For any $x \in G$, we have $\lceil B^{(2)}_e L_{xH} \rfloor=L_{xH}$.
\end{lemma}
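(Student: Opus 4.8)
The plan is to establish Lemma \ref{dsiodiojcewqs} by unwinding the definition of an equivalence bundle and exploiting the approximate-identity structure of $B_e^{(2)}$. The inclusion $\lceil B^{(2)}_e L_{xH} \rfloor \subset L_{xH}$ is immediate from axiom $(i)$ of the definition of an equivalence bundle: $B^{(2)}_e L_y \subset L_{ey} = L_y \subset L_{xH}$ whenever $y \in xH$. So the real content is the reverse inclusion $L_{xH} \subset \lceil B^{(2)}_e L_{xH} \rfloor$, i.e. that $B_e^{(2)}$ acts non-degenerately on the fibre-bundle piece $\mathcal{L}_{xH}$.

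First I would reduce to a single fibre. It suffices to show $L_y \subset \lceil B^{(2)}_e L_{xH} \rfloor$ for each fixed $y \in xH$. Fix $r \in L_y$. Using axiom $(v)$, $B_e^{(2)} = \overline{\mathrm{span}}\{{}_{\mathscr{B}^{(2)}}[s,t] : s,t \in L\}$, and the fact that $L$ has an approximate identity coming from the inner products, I want to approximate $r$ by elements of the form ${}_{\mathscr{B}^{(2)}}[s,t]\, u$ with $s,t,u$ in appropriate fibres. The key computation is the associativity axiom $(iv)$: ${}_{\mathscr{B}^{(2)}}[s,t]\, u = s\,[t,u]_{\mathscr{B}^{(1)}}$. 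Taking $s = t = u' $ near $r$ in a suitable sense, one gets ${}_{\mathscr{B}^{(2)}}[u',u']\,r = u'[u',r]_{\mathscr{B}^{(1)}}$, and I would show that as $u'$ runs through an approximate identity (built so that $\|r - u'[u',r]_{\mathscr{B}^{(1)}}\| \to 0$, using the $C^*$-axiom $\|r\|^2 = \|[r,r]_{\mathscr{B}^{(1)}}\|$ from axiom $(iii)$), the left-hand side ${}_{\mathscr{B}^{(2)}}[u',u']\,r$ lies in $B_e^{(2)} L_{xH}$ and converges to $r$. Concretely, for $r \in L_y$ one picks $u' \in L_y$ as well, so ${}_{\mathscr{B}^{(2)}}[u',u'] \in B^{(2)}_{yy^{-1}} = B_e^{(2)}$ and ${}_{\mathscr{B}^{(2)}}[u',u']\, r \in B_e^{(2)} L_y \subset B_e^{(2)} L_{xH}$, which is exactly what is needed.

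The technical heart is producing the approximate identity and justifying the norm estimate $\|r - {}_{\mathscr{B}^{(2)}}[u',u']\,r\| \to 0$. Here I would invoke the standard Hilbert-module machinery: $\langle L_y, [\,,\,]_{\mathscr{B}^{(1)}} |_{L_y \times L_y}\rangle$ is (a fibre of) an imprimitivity bimodule over the $C^*$-algebra $B_e^{(1)}$, and $B_e^{(1)}$ has an approximate identity of the form $\sum [r_i, r_i]_{\mathscr{B}^{(1)}}$; transporting this through axiom $(iv)$ gives the desired approximation of $r$ by elements of $B_e^{(2)} L_y$. The statement is essentially the assertion that an equivalence bundle is ``left full and non-degenerate'' at the level of closed spans, which is implicit in \cite{vjnj}; the only care needed is to keep the group-element bookkeeping straight so that all the products land in the claimed fibres.

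The main obstacle I anticipate is purely bookkeeping rather than conceptual: making sure the approximate-identity argument respects the bundle grading (so that the approximating elements genuinely lie in $B_e^{(2)} L_{xH}$ and not merely in some larger fibre), and citing the right non-degeneracy fact from the theory of Fell-bundle equivalences in \cite{vjnj} or \cite{MR3959060} rather than re-deriving it. Once the single-fibre statement $L_y \subset \lceil B_e^{(2)} L_{xH}\rfloor$ is in hand, taking the closed linear span over all $y \in xH$ and combining with $(ii)$ of the Banach-bundle structure (continuous sections through a given point are dense) yields $L_{xH} = \lceil B_e^{(2)} L_{xH}\rfloor$, completing the proof.
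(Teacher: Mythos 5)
Your argument is correct, but it takes a genuinely different (and longer) route than the paper. The paper's proof is a direct one-step computation: take an approximate identity $\{a_i\}$ of the $C^{\ast}$-algebra $B_e^{(2)}$ and, for $r\in L_{xH}$, expand
$\|a_ir-r\|^2=\big\|\,{}_{\mathscr{B}^{(2)}}[a_ir-r,\,a_ir-r]\,\big\|$
using ${}_{\mathscr{B}^{(2)}}[br,s]=b\,{}_{\mathscr{B}^{(2)}}[r,s]$ and the adjoint property; every term converges to ${}_{\mathscr{B}^{(2)}}[r,r]$, so the whole expression tends to $0$ and $a_ir\to r$ with $a_ir\in B_e^{(2)}L_{xH}$. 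This uses only the $C^{\ast}$-norm axiom for the left inner product and the existence of an approximate identity in $B_e^{(2)}$ --- no fullness, no associativity axiom, no reduction to single fibres. You instead manufacture the approximating elements from the $\mathscr{B}^{(1)}$-valued inner products via the associativity axiom ${}_{\mathscr{B}^{(2)}}[u',u']\,r=u'[u',r]_{\mathscr{B}^{(1)}}$; this works and in fact proves the formally stronger statement $\lceil {}_{\mathscr{B}^{(2)}}[L_{xH},L_{xH}]\,L_{xH}\rfloor=L_{xH}$, but at the cost of the grading bookkeeping you flag. One caution on your own route: the appeal to axiom $(v)$ and to an approximate identity of $B_e^{(1)}$ of the form $\sum_i[r_i,r_i]_{\mathscr{B}^{(1)}}$ with $r_i$ ranging over all of $L$ is where the grading genuinely breaks (those $r_i$ live in arbitrary fibres, so the corresponding ${}_{\mathscr{B}^{(2)}}[r,r_i]$ do not land in $B_e^{(2)}$); the safe version is the single-element functional-calculus trick $r\mapsto r\,[r,r]_{\mathscr{B}^{(1)}}\,q([r,r]_{\mathscr{B}^{(1)}})={}_{\mathscr{B}^{(2)}}[r,r]\,(r\,q([r,r]_{\mathscr{B}^{(1)}}))$ with everything drawn from the single fibre $L_y$, which is what your ``take $s=t=u'$ near $r$'' gestures at and which you should make explicit. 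Fullness is not needed for either approach.
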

\begin{proof}
In the following we use the symbol $[ \ , \ ]$ instead of $_{\mathscr{B}^{(2)}}[ \ , \ ]$. Let $S$ be a faithful $\ast$-representation of $B_e^{(2)}$. Let $\{a_i\}_{i \in I}$ be an approximation identity of $B_e^{(2)}$; then for any $r \in L_{xH}$  we have
\begin{equation*}\begin{split}
\|a_ir-r\|^2&=\|[a_ir-r, a_ir-r]\|
\\&=S_{[a_ir-r, a_ir-r]}
\\&=S_{a_i^2[r,r]-a_i^{\ast}[r,r]-a_i[r,r]+[r,r]};
\end{split}\end{equation*}
hence if $i \to \infty$, then $\|a_ir-r\| \to 0$. Therefore the linear span of $\{ar: a \in B_e^{(2)}, r \in L_{xh}\}$ is dense in $L_{xh}$ for any $h \in H$. Our proof is complete.
\end{proof}

\begin{lemma}\label{ewioiowekjdsoiwerdcs}
If $\mathscr{B}^{(2)}$ is $H$-saturated, then $\mathcal{L}$ is $\mathscr{B}^{(2)}$-full over $G/H$.
\end{lemma}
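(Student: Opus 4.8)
The plan is to show that the defining condition for $\mathscr{B}^{(2)}$-fullness over $G/H$, namely $\ulcorner\{ar: r \in L_y, y \in \alpha, a \in B^{(2)}_x\}\lrcorner = \mathcal{L}_{x\alpha}$, follows from $H$-saturation of $\mathscr{B}^{(2)}$ together with the fiberwise structure of the equivalence bundle $\mathcal{L}$. Since left translation by elements of $B^{(2)}$ and the cosets only shift indices, it suffices to prove the statement for $\alpha = H$, i.e. to show that for each $x \in G$,
\begin{equation*}
\ulcorner\{ar: r \in L_y, y \in xH, a \in B^{(2)}_{x}\}\lrcorner = L_{xH},
\end{equation*}
where I write $L_{xH}$ for $\lceil \bigcup_{y \in xH} L_y\rfloor$ (and more generally absorb the general-$\alpha$ case by replacing $x$ with a representative of $x\alpha$ times a representative of $\alpha^{-1}$, using that $B^{(2)}_x L_y \subset L_{xy}$ and that every element of the target coset arises this way).

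First I would invoke Lemma \ref{dsiodiojcewqs}, which gives $\lceil B^{(2)}_e L_{xH}\rfloor = L_{xH}$; this reduces the problem to approximating elements of $B^{(2)}_e$ (acting on $L_{xH}$) by sums of products $a b$ with $a \in B^{(2)}_{x}$, $b \in B^{(2)}_{x^{-1}}$ — and this is exactly the content of $H$-saturation, $B^{(2)}_e \subset \lceil B^{(2)}_{xH}(B^{(2)}_{xH})^{\ast}\rfloor$, once one notes $(B^{(2)}_{xh})^{\ast} \subset B^{(2)}_{h^{-1}x^{-1}} \subset B^{(2)}_{x^{-1}H^{-1}} = B^{(2)}_{x^{-1}}$ in the appropriate coset sense. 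So an element $c \in B^{(2)}_e$ is approximated by finite sums $\sum_i a_i b_i^{\ast}$ with $a_i, b_i \in B^{(2)}_{xH}$. Then for $r \in L_{xH}$ we have $cr \approx \sum_i a_i(b_i^{\ast} r)$, and $b_i^{\ast} r \in (B^{(2)}_{x})^{\ast} L_{xH}$; writing $x H$-cosets out, $b_i^{\ast} r$ lands in $L_{x^{-1}\cdot xH} = L_H$ up to the coset bookkeeping, while $a_i \in B^{(2)}_{x}$ modulo $H$. Thus $cr$ is approximated by elements of $\{a s : a \in B^{(2)}_{x}, s \in L_y, y \in H\}$, as required; combining with Lemma \ref{dsiodiojcewqs} and taking closed linear spans finishes the argument. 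I would then handle the general coset $\alpha$ by the same shift-of-indices remark, choosing $x'$ with $x'H$ the target coset and writing $x' = x \cdot x_0$ where $x_0 H = \alpha$.

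The main obstacle — and the step that needs care rather than being purely routine — is the index bookkeeping between the \emph{set} $B^{(2)}_{xH} = \bigcup_{h \in H} B^{(2)}_{xh}$ and a single fiber $B^{(2)}_{x}$: the products $a_i b_i^{\ast}$ produced by $H$-saturation have $a_i \in B^{(2)}_{xh_i}$ and $b_i \in B^{(2)}_{xk_i}$ for possibly different $h_i, k_i \in H$, so $b_i^{\ast} \in B^{(2)}_{k_i^{-1}x^{-1}}$ and the product $a_i b_i^{\ast} \in B^{(2)}_{xh_ik_i^{-1}x^{-1}}$ which need not be in $B^{(2)}_e$ unless $h_i = k_i$. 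The honest way around this is to approximate $c \in B^{(2)}_e$ not by raw products but to first multiply on one side by an approximate identity sitting in a fixed fiber, or to observe that in the Fell-bundle setting $\lceil B^{(2)}_{xH}(B^{(2)}_{xH})^{\ast}\rfloor$ already equals $\lceil \bigcup_{h}B^{(2)}_{xh}(B^{(2)}_{xh})^{\ast}\rfloor = \lceil\bigcup_h B^{(2)}_{xh}B^{(2)}_{h^{-1}x^{-1}}\rfloor$, each summand of which does lie in $B^{(2)}_e$; so one should read the definition of $H$-saturation with this fiberwise interpretation and then the argument above goes through verbatim, with $a_i \in B^{(2)}_{xh_i}$, $b_i^{\ast} \in B^{(2)}_{h_i^{-1}x^{-1}}$, hence $b_i^{\ast} r \in L_{h_i^{-1}x^{-1}\cdot xH} = L_{h_i^{-1}H} = L_H$ and $a_i(b_i^{\ast}r) \in B^{(2)}_{xh_i}L_H \subset \{as: a \in B^{(2)}_{x}, s \in L_y, y \in H\}$ after absorbing $h_i \in H$ into the $L$-index. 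I would make this fiberwise reading explicit at the start of the proof to avoid the apparent mismatch, and the rest is a short approximation argument using Lemma \ref{dsiodiojcewqs} and continuity of the module multiplication on $\mathcal{L}$.
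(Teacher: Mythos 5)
Your argument is essentially the paper's: both reduce the claim to showing $\lceil B^{(2)}_{xH}L_{H}\rfloor = L_{xH}$ for each $x \in G$ and then run the same chain $L_{xH}=\lceil B^{(2)}_e L_{xH}\rfloor \subset \lceil B^{(2)}_{xH}B^{(2)\ast}_{xH}L_{xH}\rfloor \subset \lceil B^{(2)}_{xH}L_{H}\rfloor$, using Lemma \ref{dsiodiojcewqs} for the first equality and $H$-saturation for the inclusion. Your additional paragraph on the fiberwise reading of $B^{(2)}_{xH}B^{(2)\ast}_{xH}$ is a sensible clarification of a point the paper leaves implicit, but it does not change the route.
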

\begin{proof}
It is sufficient to prove that $\lceil B^{(2)}_{xH}L_{H} \rfloor=L_{xH}$ for any $x \in G$. We have
\begin{equation*}\begin{split}
L_{xH}=\lceil B^{(2)}_e L_{xH} \rfloor \subset \lceil B^{(2)}_{xH}B^{(2)\ast}_{xH}L_{xH} \rfloor=\lceil B^{(2)}_{xH}(B^{(2)\ast}_{xH}L_{xH})  \rfloor \subset \lceil B^{(2)}_{xH}L_{H}\rfloor.
\end{split}\end{equation*}
On the other hand, it is easy to see that $\lceil B^{(2)}_{xH}L_{H}\rfloor \subset L_{xH}$. Our proof is complete.
\end{proof}

\begin{lemma}\label{weisdkjdsijewripdx}
If $\mathscr{B}^{(1)}$ and $\mathscr{B}^{(2)}$ are strongly equivalent implemented by $\mathcal{L}$, then $\mathcal{L}$ is $\mathscr{B}^{(2)}$-full over G/H.
\end{lemma}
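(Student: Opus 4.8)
The plan is to establish the single fibrewise identity
\begin{equation}\label{eq:plan-fibrewise}
\lceil B^{(2)}_{x}L_{y}\rfloor = L_{xy}\qquad(x,y\in G),
\end{equation}
from which $\mathscr{B}^{(2)}$-fullness over $G/H$ follows at once: for fixed $x\in G$ and $\alpha\in G/H$ every fibre of $\mathcal L_{x\alpha}$ has the form $L_{w}$ with $w=x(x^{-1}w)$ and $x^{-1}w\in\alpha$, so \eqref{eq:plan-fibrewise} applied with $y=x^{-1}w$ says exactly that $\lceil\{ar:a\in B_{x},\,r\in L_{x^{-1}w}\}\rfloor=L_{w}$ on each fibre, and the usual reduction to fibres for Banach bundles (as in \cite[II.13]{MR936628}) then gives $\ulcorner\{ar:r\in L_{y},\,y\in\alpha,\,a\in B_{x}\}\lrcorner=\mathcal L_{x\alpha}$. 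In \eqref{eq:plan-fibrewise} the inclusion $\lceil B^{(2)}_{x}L_{y}\rfloor\subseteq L_{xy}$ is automatic from the equivalence-bundle axioms, so only ``$\supseteq$'' carries content. I will also use the right-module counterpart of Lemma \ref{dsiodiojcewqs}, namely $\lceil L_{xy}B^{(1)}_{e}\rfloor=L_{xy}$; this is proved by the identical argument, now estimating $\|ra_{i}-r\|^{2}=\|[ra_{i}-r,\,ra_{i}-r]_{\mathscr B^{(1)}}\|$ against an approximate identity $\{a_{i}\}$ of the $C^{\ast}$-algebra $B^{(1)}_{e}$ and expanding with $[ra,s]_{\mathscr B^{(1)}}=a^{\ast}[r,s]_{\mathscr B^{(1)}}$.

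The core of the proof is then the chain, valid for every $x,y\in G$,
\begin{equation*}
L_{xy}=\lceil L_{xy}B^{(1)}_{e}\rfloor=\lceil L_{xy}\,[L_{y},L_{y}]_{\mathscr B^{(1)}}\rfloor=\lceil {_{\mathscr B^{(2)}}[L_{xy},L_{y}]}\,L_{y}\rfloor\subseteq\lceil B^{(2)}_{x}L_{y}\rfloor\subseteq L_{xy}.
\end{equation*}
The second equality uses that strong equivalence makes $\mathcal L$ \emph{fibrewise full} as a right Hilbert $\mathscr B^{(1)}$-module, i.e.\ $\lceil[L_{y},L_{y}]_{\mathscr B^{(1)}}\rfloor=B^{(1)}_{e}$ for each single $y\in G$ (rather than only the joint density over all fibres guaranteed by axiom (v) of an equivalence bundle), together with bilinearity and continuity of the right action $\mathcal L\times\mathscr B^{(1)}\to\mathcal L$. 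The third equality is just the associativity axiom (iv), $r\,[s,t]_{\mathscr B^{(1)}}={_{\mathscr B^{(2)}}[r,s]}\,t$, so the two generating sets literally coincide. The first inclusion holds because ${_{\mathscr B^{(2)}}[L_{xy},L_{y}]}\subseteq B^{(2)}_{(xy)y^{-1}}=B^{(2)}_{x}$, whence each ${_{\mathscr B^{(2)}}[r,s]}\,t$ with $r\in L_{xy}$, $s,t\in L_{y}$ lies in $B^{(2)}_{x}L_{y}$. This proves \eqref{eq:plan-fibrewise} and hence the lemma.

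The one step that genuinely has to be pinned down is the passage from the precise meaning of ``strongly equivalent'' to the fibrewise fullness $\lceil[L_{y},L_{y}]_{\mathscr B^{(1)}}\rfloor=B^{(1)}_{e}$ used above; the rest is a string of module identities and carries no analytic difficulty. If strong equivalence is defined as saturation of the equivalence bundle (both inner products fibrewise full, equivalently the linking bundle is a saturated Fell bundle), this fullness is part of the definition; if instead $\lceil B^{(2)}_{x}L_{y}\rfloor=L_{xy}$ is itself the hypothesis, then \eqref{eq:plan-fibrewise} is immediate and only the bookkeeping of the first paragraph survives; and if it is phrased only on the $\mathscr B^{(2)}$-side, one first transfers it to the $\mathscr B^{(1)}$-side by the same axiom (iv) combined with the joint density of axiom (v). An equally short alternative route worth recording: strong equivalence forces $\mathscr B^{(2)}$ itself to be saturated, a fortiori $H$-saturated, and then Lemma \ref{ewioiowekjdsoiwerdcs} applies without change.
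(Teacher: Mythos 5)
Your proof follows essentially the same route as the paper's: the same chain $\lceil B^{(2)}L\rfloor \supseteq \lceil{}_{\mathscr{B}^{(2)}}[L,L]\,L\rfloor = \lceil L\,[L,L]_{\mathscr{B}^{(1)}}\rfloor \supseteq \lceil L\,B_e^{(1)}\rfloor = L$, the same appeal to the associativity axiom to switch inner products, and the same approximate-identity argument (Lemma \ref{dsiodiojcewqs} and its right-module twin). The only difference is that you run the argument fibre by fibre, which requires the fullness $\lceil[L_y,L_y]_{\mathscr{B}^{(1)}}\rfloor=B_e^{(1)}$ at each single $y\in G$, whereas the paper works with the coset restrictions $L_H$, $L_{xH}$ and invokes only $\lceil[L_H,L_H]_{\mathscr{B}^{(1)}}\rfloor\supseteq B_e^{(1)}$; since the paper never actually records its definition of strong equivalence, you are right to single this out as the one input that must be checked against that definition, and your conditional discussion of the possible definitions is the appropriate way to handle it.
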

\begin{proof}
It is sufficient to prove that $\lceil B^{(2)}_{xH}L_{H} \rfloor=L_{xH}$ for any $x \in G$. By Lemma \ref{dsiodiojcewqs} we have
\begin{equation}\label{dsisdklcd}
\lceil B^{(2)}_{xH}L_H \rfloor  \supset \  \lceil_{\mathscr{B}^{(2)}}[L_{xH}, L_H] L_H \rfloor=\lceil L_{xH}  [L_H, L_H]_{\mathscr{B}^{(1)}}\rfloor;
\end{equation}
but since $\mathscr{B}^{(1)}$ and $\mathscr{B}^{(2)}$ are strongly equivalent, hence we have
\begin{equation}\label{sdidklxwe}
\lceil[L_H, L_H]_{\mathscr{B}^{(1)} }\rfloor \supset \lceil B^{(1)}B_e^{(1)\ast} \rfloor \supset B_e^{(1)}.
\end{equation}
Combining (\ref{dsisdklcd}) and (\ref{sdidklxwe})  we conclude that
\begin{equation}\label{sdaiasdkasw}
\lceil B^{(2)}_{xH}L_H \rfloor \supset L_{xH}.
\end{equation}
(\ref{sdaiasdkasw}) and $\lceil B^{(2)}_{xH}L_H \rfloor \subset L_{xH}$ completes our proof.
\end{proof}

\subsection{Application I: The Positivity of Representation}

\begin{lemma}\label{eijdsksdioewkzxaw}
Suppose $\mathscr{B}^{(1)}$ and $\mathscr{B}^{(2)}$ are strongly equivalent implemented by $\mathcal{L}$. If $S$ is a non-degenerate $\ast$-representation of $\mathscr{B}_H^{(1)}$ which is $\mathscr{B}^{(1)}$-positive, then ${\rm{Ind}}(S; \mathcal{L}_H)$ is $\mathscr{B}^{(2)}$-positive.
\end{lemma}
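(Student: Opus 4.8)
The plan is to bootstrap from Theorem \ref{weioweiordeddsaf} (Bridge Lemma III) together with the fullness criterion supplied by Lemma \ref{weisdkjdsijewripdx}. First I would observe that the hypothesis that $\mathscr{B}^{(1)}$ and $\mathscr{B}^{(2)}$ are strongly equivalent implemented by $\mathcal{L}$ guarantees, via Lemma \ref{weisdkjdsijewripdx}, that $\mathcal{L}$ is $\mathscr{B}^{(2)}$-full over $G/H$; this is exactly the side condition needed to invoke Bridge Lemma III. Second, since $S$ is a non-degenerate $\mathscr{B}^{(1)}$-positive $\ast$-representation of $\mathscr{B}_H^{(1)}$, Theorem \ref{weioweiordeddsaf} applies directly and yields both that ${\rm{Ind}}(S; \mathscr{L}(\mathcal{L}_H))$ exists and, crucially, that it is $\mathscr{B}^{(2)}$-positive. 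That is the entire content of the claim, once one matches notation: the object written ${\rm{Ind}}(S; \mathcal{L}_H)$ in the statement is the same as ${\rm{Ind}}(S; \mathscr{L}(\mathcal{L}_H))$ in Theorem \ref{weioweiordeddsaf}, the induced representation of $\mathscr{B}_H^{(2)}$ obtained through the imprimitivity bimodule $\mathscr{L}(\mathcal{L}_H)$ carrying the bimodule structure inherited from $\mathscr{L}(\mathcal{L})$.

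So the proof is essentially a two-line deduction: apply Lemma \ref{weisdkjdsijewripdx} to get fullness, then apply Theorem \ref{weioweiordeddsaf}. I would write it in precisely that order, being careful to state explicitly that "strongly equivalent" is the hypothesis that feeds Lemma \ref{weisdkjdsijewripdx}, and that non-degeneracy plus $\mathscr{B}^{(1)}$-positivity of $S$ are the hypotheses that feed Theorem \ref{weioweiordeddsaf}. One should also remark, for the reader's benefit, that the existence of ${\rm{Ind}}(S; \mathscr{L}(\mathcal{L}_H))$ is part of the conclusion of Bridge Lemma III and is not assumed a priori; this reassures the reader that the statement ${\rm{Ind}}(S; \mathcal{L}_H)$ is meaningful.

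The only genuine subtlety — and the place where a careful writer should pause — is the identification of the module $\mathscr{L}(\mathcal{L}_H)$ with the correct bimodule structure over $\mathscr{L}(\mathscr{B}_H^{(1)})$ and $\mathscr{L}(\mathscr{B}_H^{(2)})$, i.e.\ checking that the induced-representation construction referred to in the present lemma is literally the one appearing in Theorem \ref{weioweiordeddsaf} and not some variant. Since both use the restriction of the $\mathscr{L}(\mathscr{B}^{(2)})$-$\mathscr{L}(\mathscr{B}^{(1)})$ imprimitivity bimodule $\mathscr{L}(\mathcal{L})$ to the closed subgroup $H$, as discussed in the Introduction, this identification is immediate, and no new computation is required. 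I do not anticipate any real obstacle here: the lemma is a direct corollary of the machinery already assembled, and the proof should occupy no more than a few sentences.
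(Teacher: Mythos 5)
Your proposal matches the paper's own proof exactly: the paper also deduces the lemma as an immediate consequence of Lemma \ref{weisdkjdsijewripdx} (strong equivalence implies $\mathscr{B}^{(2)}$-fullness over $G/H$) followed by Theorem \ref{weioweiordeddsaf}. Your write-up simply makes explicit the hypothesis-matching and the identification of $\mathscr{L}(\mathcal{L}_H)$ that the paper leaves implicit.
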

\begin{proof}
This is an easy consequence of Lemma \ref{weisdkjdsijewripdx} and Theorem \ref{weioweiordeddsaf}.
\end{proof}

\begin{theorem}\label{erwiosdiojedxsqwde}
Let $\mathscr{B}$ be a Fell bundle over $G$. Any non-degenerate $\ast$-representation of $\mathscr{B}_H$ is $\mathscr{B}$-positive.
\end{theorem}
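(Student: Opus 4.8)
The plan is to reduce Theorem \ref{erwiosdiojedxsqwde} to the saturated case by transporting positivity along an equivalence. By the structure theorem of \cite{MR3959060} and \cite{vjnj}, I would fix a saturated Fell bundle $\mathscr{A}$ over $G$ together with an $\mathscr{A}$-$\mathscr{B}$ equivalence bundle $\mathcal{L}$ implementing a \emph{strong} equivalence between $\mathscr{A}$ and $\mathscr{B}$ (passing to the conjugate equivalence bundle if necessary, so that $\mathscr{B}$ occupies the role of $\mathscr{B}^{(2)}$ in Lemma \ref{eijdsksdioewkzxaw}). Restricting everything to $H$, the bundle $\mathcal{L}_H$ is an $\mathscr{A}_H$-$\mathscr{B}_H$ equivalence bundle, and $\mathscr{A}_H$ is again saturated, since saturation of $\mathscr{A}$ restricted to pairs of elements of $H$ is precisely saturation of $\mathscr{A}_H$. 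Let $T$ be the given non-degenerate $\ast$-representation of $\mathscr{B}_H$; we may harmlessly assume $T\neq 0$.

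The first step is to realise $T$, up to unitary equivalence, as a representation induced from $\mathscr{A}_H$ via $\mathcal{L}_H$. Since $\mathcal{L}_H$ is an equivalence bundle, $\mathscr{L}(\mathcal{L}_H)$ is an imprimitivity bimodule implementing a Morita equivalence between $C^{\ast}(\mathscr{A}_H)$ and $C^{\ast}(\mathscr{B}_H)$ (the analogue over $H$ of \cite[Corollary 4.3]{vjnj}). Hence Rieffel induction along $\mathscr{L}(\mathcal{L}_H)$ and along its conjugate are mutually inverse equivalences between the categories of non-degenerate $\ast$-representations of $C^{\ast}(\mathscr{A}_H)$ and of $C^{\ast}(\mathscr{B}_H)$; in particular $\widetilde{T}$ is unitarily equivalent to the representation induced from the non-degenerate $\ast$-representation $\widetilde{S}$ of $C^{\ast}(\mathscr{A}_H)$ obtained by inducing $\widetilde{T}$ along the conjugate bimodule. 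Moreover, induction along an equivalence bundle sends the integrated form of a $\ast$-representation of one Fell bundle to the integrated form of a $\ast$-representation of the equivalent bundle (this is the representation-theoretic equivalence between equivalent Fell bundles; see \cite{vjnj}), so $\widetilde{S}$ is the integrated form of a non-degenerate $\ast$-representation $S$ of $\mathscr{A}_H$, and ${\rm{Ind}}(S;\mathcal{L}_H)$ is a $\ast$-representation of $\mathscr{B}_H$ whose integrated form is unitarily equivalent to $\widetilde{T}$; that is, ${\rm{Ind}}(S;\mathcal{L}_H)\simeq T$.

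Since $\mathscr{A}$ is saturated, $S$ is automatically $\mathscr{A}$-positive; this is the classical form of the positivity statement in the saturated case, which is exactly what makes the imprimitivity theorem for saturated Fell bundles work (cf. \cite[\S XI]{MR936629}). Now I would apply Lemma \ref{eijdsksdioewkzxaw} with $\mathscr{B}^{(1)}=\mathscr{A}$, $\mathscr{B}^{(2)}=\mathscr{B}$ and the strong equivalence $\mathcal{L}$ to conclude that ${\rm{Ind}}(S;\mathcal{L}_H)$ is $\mathscr{B}$-positive. Because $\mathscr{B}$-positivity of a $\ast$-representation only involves the positivity of the operators $S_{p(f^{\ast}\ast f)}$, it is invariant under unitary equivalence; since ${\rm{Ind}}(S;\mathcal{L}_H)\simeq T$, it follows that $T$ is $\mathscr{B}$-positive, which proves the theorem.

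The main obstacle is the first step, producing $S$, which rests on two facts outside the preceding material: that restricting a strong equivalence bundle to the closed subgroup $H$ again yields a (strong) equivalence of the restricted bundles — the only delicate point being that the restricted bundle still satisfies the spanning axiom, which is where strong, rather than mere, equivalence is used, and which is a routine check from the definition — and that induction along an equivalence bundle implements an equivalence of representation theories at the level of the Fell bundles themselves, not merely of their $C^{\ast}$-completions, for which one quotes \cite{vjnj}. A subsidiary point is that the structure theorem be invoked in its strong form, so that Lemma \ref{weisdkjdsijewripdx} guarantees the $\mathscr{B}$-fullness over $G/H$ required by Lemma \ref{eijdsksdioewkzxaw}; the stabilisation construction underlying the structure theorem does furnish a strong equivalence. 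The remaining ingredients — the saturated case and the final unitary-invariance remark — are standard.
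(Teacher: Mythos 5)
Your proposal has exactly the same skeleton as the paper's proof: replace $\mathscr{B}$ by a ``nice'' Fell bundle strongly equivalent to it, use the surjectivity of Rieffel induction along $\mathscr{L}(\mathcal{L}_H)$ (coming from the Morita equivalence of the $C^{\ast}$-algebras of the restricted bundles) to write the given representation $T$ as ${\rm{Ind}}(S;\mathscr{L}(\mathcal{L}_H))$ for some non-degenerate representation $S$ of the nice bundle's restriction to $H$, observe that $S$ is positive for the nice bundle, and transport positivity through Lemma \ref{eijdsksdioewkzxaw}. The one substantive divergence is the choice of the nice bundle, and hence the source of the base-case positivity: the paper takes $\mathscr{B}^{(1)}$ to be a semi-direct product bundle and quotes \cite[XI.8.9]{MR936629}, which applies because such bundles have enough unitary multipliers; you take $\mathscr{A}$ saturated and assert that positivity of every representation of $\mathscr{A}_H$ is ``the classical form of the positivity statement in the saturated case.''

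That assertion is the weak point, and you should not wave at it. Fell--Doran's positivity result XI.8.9 concerns bundles with enough unitary multipliers, and saturation does not supply unitary multipliers; to my knowledge the $\mathscr{B}$-positivity of an arbitrary non-degenerate representation of $\mathscr{B}_H$ for saturated $\mathscr{B}$ and arbitrary closed $H$ is not stated in \cite{MR936629} and is part of precisely the question that this theorem (and Ferraro's \cite{ccc}) settles. The saturated imprimitivity theorem XI.14.18 goes in the opposite direction --- from a system of imprimitivity down to a representation of $\mathscr{B}_H$, which is positive by construction --- and does not by itself yield positivity of an abstractly given representation of $\mathscr{B}_H$. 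Unless you can supply a precise classical reference for the saturated base case, your argument risks circularity at its foundation; the repair is the one the paper makes, namely to take the nice bundle to be a semi-direct product bundle so that XI.8.9 applies. Everything else in your proposal --- restricting the equivalence to $H$, the surjectivity of induction, the invariance of $\mathscr{B}$-positivity under unitary equivalence, and invoking the strong form of the stabilisation theorem so that Lemmas \ref{weisdkjdsijewripdx} and \ref{eijdsksdioewkzxaw} are available --- matches the paper's argument.
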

\begin{proof}
If $\mathscr{B}$ is a semi-direct product bundle, then apply \cite[XI.8.9]{MR936629} it is easy to verify that any $\ast$-representation of $\mathscr{B}_H$ is $\mathscr{B}$-positive.

Now we drop the assumption that $\mathscr{B}$ is a semi-direct product bundle. Let $\mathscr{B}=\mathscr{B}^{(2)}$; and let $\mathscr{B}^{(1)}$ be a semi-direct product bundle which  is strongly equivalent to $\mathscr{B}=\mathscr{B}^{(2)}$ implemented by $\mathcal{L}$. Then $C^{\ast}(\mathscr{B}^{(1)}_H)$ and $C^{\ast}(\mathscr{B}_H^{(2)})$ are Morita equivalent implemented by $\mathscr{L}(\mathcal{L}_H)$. Hence there is a non-degenerate $\ast$-representation $S'$ of $\mathscr{B}_H^{(1)}$ such that ${\rm{Ind}}(S'; \mathscr{L}(\mathcal{L}_H))=S$. We have known that $S'$ is $\mathscr{B}^{(1)}$-positive; by Lemma \ref{eijdsksdioewkzxaw} $S$ is $\mathscr{B}^{(2)}$-positive. Our proof is complete.
\end{proof}

\subsection{Application II: Fell-Doran's Imprimitivity Theorem and Conjugation of Representations}

\subsubsection{Background}
We review the notions from \cite{MR936629} which we need in our sequential discussions. 
\begin{definition}\label{ewiodsjweoijddexs}
\rm $A$ $system$ $of$ $imprimitivity$ ($for$ $\mathscr{B}$ $over$ $G/H$) is a pair $\langle T, P \rangle$, where $T$ is a non-degenerate $\ast$-representation of $\mathscr{B}$ and $P$ a regular projection-valued measure of $G/H$ on $\mathcal{O}(X(T))$ satisfying
\begin{equation*}
T_b(P(W))=P(\pi(b)W)T_b  
\end{equation*} 
for all $b \in B$ and Borel subsets $W$ of $G/H$.
\end{definition}

\begin{construction}\label{weijodsiodwioqewxsaddwf}
\rm  For each $x \in G$ we let $D_x=\mathscr{C}_0(G/H, B_x)$ (the space of continuous functions from $G/H$ into $B_x$ vanishing at the infinity point). Let $D$ be the disjoint union of $\{D_x\}_{x \in G}$. By \cite[\S VIII.7]{MR936629} $D$ is a Fell bundle over $G$ with the multiplication and involution defined by
\begin{equation*}\begin{split}
&\psi\phi(m)=\psi(m)\phi(x^{-1}m),
\psi^{\ast}(m)=(\psi(xm))^{\ast}
\end{split}\end{equation*}
($x, y \in G; \ \psi \in D_x,\ \phi \in D_y; m \in G/H$). We call this Fell bundle, which we denote by $\mathscr{D}$, the $G, G/H$ $transformation$ $bundle$ $derived$ $from$ $\mathscr{B}$. 

 For $f \in \mathscr{L}(\mathscr{B})$ and $r \in \mathscr{C}_0(G/H)$, we use the symbol $fr$ to denote the the map $fr: G \to D$ defined by $fr(x)(m)=f(x)r(m)$ ($x \in G, \ m \in G/H$), and the linear span of $\{fr: x \in G, f \in \mathscr{L}(\mathscr{B}), \ r \in \mathscr{C}_0(G/H, B_x)\}$ by $\Gamma_{\mathscr{D}}$. By \cite[VIII.18.17]{MR936629}, $\Gamma_{\mathscr{D}}$ is a $\ast$-algebra dense in $\mathscr{L}(\mathscr{D})$ in the inductive limit topology.

By \cite[\S VIII.18]{MR936629} any non-degenerate $\ast$-representation of $\mathscr{D}$ can be identified with a unique system of imprimitivity $\langle T, P \rangle$; and by \cite[VIII.18.17]{MR936629}, under this identification we have 
\begin{equation}\label{sdisdicewewqdsa}
\langle T, P \rangle (fr)=P(r)T(f) \ \ \ \ (r \in \mathscr{C}_0(G/H); \ f \in \mathscr{L}(\mathscr{B})).
\end{equation}
\end{construction}

\begin{construction}\label{eoisdoisdfiocxoierer}
\rm Let $_{\mathscr{L}(\mathscr{D})}[ \ , \ ]: \mathscr{L}(\mathscr{B}) \times \mathscr{L}(\mathscr{B}) \to \mathscr{L}(\mathscr{D})$ be defined by
\begin{equation*}
_{\mathscr{L}(\mathscr{D})}[f,g](x, yH)=\int_H f(yh)g^{\ast}(h^{-1}y^{-1}x)dvh \ \ \ \ (x,y \in G).
\end{equation*}
Recall from Construction \ref{rseidierdcdewer} the definition of $[ \ , \ ]_{\mathscr{L}(\mathscr{B}_H)}$. By \cite[\S XI.14]{MR936629} we know that $\langle \mathscr{L}(\mathscr{B}), _{\mathscr{L}(\mathscr{D})}[\ , \ ], [ \ , \ ]_{\mathscr{L}(\mathscr{B}_H)}) \rangle$ is a $\mathscr{L}(\mathscr{D})$, $\mathscr{L}(\mathscr{B}_H)$ imprimitivity bimodule. Let $F_\mathscr{D}=\{_{\mathscr{L}(\mathscr{D})}[f,g]: f, g \in \mathscr{L}(\mathscr{B})\}$. 

Let $S$ be a non-degenerate $\ast$-representation of $\mathscr{B}_H$; and let $\mathscr{Y}$ be the Hilbert bundle over $G/H$ induced from $S$. For each Borel subset $W \subset G/H$, we define a projection $P(W)$ by
\begin{equation*}
P(W)f(x)=\chi_W (m)f(m) \ \ \ \ (f \in \mathscr{L}_2(\mathscr{Y}); \ m \in G/H)
\end{equation*}
for all $f \in \mathscr{L}(\mathscr{B})$ and $r \in \mathscr{C}_0(G/H)$.
Then $\langle {\rm{Ind}}_{\mathscr{B}_H \uparrow \mathscr{B}}(S), P \rangle$ is a system of imprimitivity; and we have
\begin{equation*}
\langle {\rm{Ind}}_{\mathscr{B}_H \uparrow \mathscr{B}}(S), P \rangle \simeq {\rm{Ind}}(S; \langle \mathscr{L}(\mathscr{B}),  [ \ , \ ]_{\mathscr{L}(\mathscr{B}_H)}, \ _{\mathscr{L}(\mathscr{D})}[ \ , \ ] \rangle).
\end{equation*}
\begin{definition}\label{adkjldsalidwxdw}
\rm We say that $\langle{\rm{Ind}}_{\mathscr{B}_H \uparrow \mathscr{B}}(S), P \rangle$ $is$ $induced$ $from$ $S$.
\end{definition}
\end{construction}

\begin{construction}
\rm Let $S$ be a non-degenerate $\ast$-representation of $\mathscr{B}_H$. Let $\mathscr{Y}$ be the Hilbert bundle over $G/H$ induced from $S$ (See Construction \ref{seriodiofdsiofdsoierw}). Let $x \in G$. For each $c \in B_{xHx^{-1}}$, we define $S'_c: Y_{xH} \to Y_{xH}$ by  
\begin{equation*}
S'_c(_x\xi)=\delta(h)^{(1/2)}\Delta^{(-1/2)}\tau_c(_x\xi)  \ \ \ \ (_x\xi \in Y_{xH}).
\end{equation*}
Then $S'_c$ is a bounded operator on $Y_{xH}$, and $c \mapsto S'_c$ is a $\ast$-representation of $\mathscr{B}_{xHx^{-1}}$ which we denote by $^{x}S$; and we say that $^{x}S$ is the $x$-$conjugate$ of $S$.

By \cite[XI.16.11]{MR936629}, for each $x \in G$ there is a $\mathscr{L}(\mathscr{B}_H)$, $\mathscr{L}(\mathscr{B}_{xHx^{-1}})$ imprimitivity bimodule $\langle \mathscr{L}(\mathscr{B}_{xH}), _{\mathscr{L}(\mathscr{B}_{xHx^{-1}})}[ \ , \ ], [ \ , \ ]_{\mathscr{L}(\mathscr{B}_H)} \rangle$ such that
\begin{equation}\label{wejisdkjsdfiojdsiwerdsxs}
^xS \simeq {\rm{Ind}}(S; \langle \mathscr{L}(\mathscr{B}_{xH}), _{\mathscr{L}(\mathscr{B}_{xHx^{-1}})}[ \ , \ ], [ \ , \ ]_{\mathscr{L}(\mathscr{B}_H)} \rangle).
\end{equation}
We omit the details of the structures of this imprimitivity bimodule because we do not need them in this work.
\end{construction}


\subsubsection{The Results}
\begin{lemma}\label{siokaweiodkzxdaw}
Let $F$ be a $\ast$-ideal of $\mathscr{L}(\mathscr{B})$; and for each $x \in G$ let $J_x$ be the closure of $\{f(x): f \in F\}$. For each $y, z \in G$, we have
\begin{equation*}
B_yJ_z \subset J_{yz}; \ J_zB_y \subset J_{zy}; J_y^{\ast} \subset J_{y^{-1}}.
\end{equation*}
\end{lemma}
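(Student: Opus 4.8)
The statement is essentially a formal consequence of the Fell bundle axioms applied to the convolution algebra $\mathscr{L}(\mathscr{B})$, together with the fact that evaluation at a point $x \in G$ is compatible with convolution in the bundle-theoretic sense. The plan is to exploit the approximate-identity structure of $\mathscr{L}(\mathscr{B})$ to realize a given $b \in B_y$ and element $f(z) \in J_z$ as a limit of products of evaluations of functions in the ideal $F$.

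First I would recall the standard fact (from \cite[\S VIII.16]{MR936629}) that for any $b \in B_y$ one can find a net $(g_w)$ in $\mathscr{L}(\mathscr{B})$ supported in shrinking neighbourhoods of $y$ with $g_w(y) \to b$ in $B_y$, and moreover such that for any $f \in \mathscr{L}(\mathscr{B})$ the convolution $g_w \ast f$ converges in the inductive limit topology to a cross-section whose value at $yz$ is $b\,f(z)$ (this uses left-invariance of the Haar measure and continuity of the bundle multiplication, and is where the $G/H$-approximation-unit style arguments used elsewhere in the paper, e.g.\ in Construction \ref{sdijodsojidiodakadqwe}, get used). Now if $f \in F$, then since $F$ is a $\ast$-ideal of $\mathscr{L}(\mathscr{B})$, we have $g_w \ast f \in F$ for every $w$; hence $(g_w \ast f)(yz) \in J_{yz}$ by definition of $J_{yz}$, and since $J_{yz}$ is closed and $(g_w \ast f)(yz) \to b\,f(z)$, we conclude $b\,f(z) \in J_{yz}$. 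Taking the closed linear span over $f \in F$ and $b \in B_y$ gives $B_y J_z \subset J_{yz}$. The inclusion $J_z B_y \subset J_{zy}$ is symmetric, using right convolution $f \ast g_w$ and the modular-function factors that appear in the right action, and $F$ being a right ideal; and $J_y^{\ast} \subset J_{y^{-1}}$ follows from the involution $f^{\ast}(x) = f(x^{-1})^{\ast}$ (up to the rho/modular factor) on $\mathscr{L}(\mathscr{B})$ together with $F$ being $\ast$-closed, since then $f \in F \Rightarrow f^{\ast} \in F$ and $f^{\ast}(y^{-1}) = f(y)^{\ast}$ up to a positive scalar that does not affect the closed span.

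The one point requiring a little care — and the main obstacle, such as it is — is the approximation step: one must be sure that the net of convolutions $g_w \ast f$ actually converges \emph{pointwise at the single point} $yz$ to $b f(z)$, not merely in some weaker averaged sense. This is handled by choosing the $g_w$ supported in a neighbourhood basis of $\{y\}$ and using that $f$ is continuous, so that $\int_G g_w(s) f(s^{-1}yz)\,ds$ is a weighted average of values $f(s^{-1}yz)$ with $s$ near $y$, hence close to $f(z)$, while $\int_G g_w(s)\,ds \to b$ in $B_y$; continuity of the bundle multiplication $B_y \times B_z \to B_{yz}$ then gives the claim. Everything else is a routine unwinding of definitions, so I would present the $B_y J_z$ case in full and remark that the other two are entirely analogous.
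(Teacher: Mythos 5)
Your proposal is correct and follows essentially the same route as the paper: approximate $b\in B_y$ by sections of $\mathscr{L}(\mathscr{B})$ localized near $y$ (the paper uses $\psi_i g$ with $g(y)=b$ and a scalar approximate unit $\psi_i$ concentrated at $y$), use that $F$ is an ideal so the convolution lies in $F$, evaluate at $yz$, and conclude by closedness of $J_{yz}$ and density of $\{f(z):f\in F\}$ in $J_z$. The remaining two inclusions are dispatched by symmetry in both treatments, so no further comment is needed.
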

\begin{proof}
Let $a \in B_y$ and $b \in \{f(z): f \in F\}$. Let $g \in \mathscr{L}(\mathscr{B})$ such that $g(y)=a$; and we take a $f \in F$ satisfying $f(z)=b$. Let $\{\psi_i\}_{i \in I}$ be an approximation unit of $G$ around $y$; we have
\begin{equation*}
ab={\rm{lim}}_{i \to \infty}\int_G(\psi_i(x)g(x))f(x^{-1}yz))dx=((\psi g)\ast f)(yz) \in J_{yz}.
\end{equation*}
Since $\{f(z): f \in F\}$ is dense in $J_z$, hence we conclude that $B_y J_z \subset J_{yz}$. By the same argument we can prove that $J_z B_y \subset J_{zy}$ and $J_y^{\ast} \subset J_{y^{-1}}$.
\end{proof}

\begin{lemma}\label{eriodkadsdsacx}
We keep the notations of Lemma \ref{siokaweiodkzxdaw}. If $J_e \neq B_e$, then there is a non-degenerate $\ast$-representation $T$ of $\mathscr{B}$ such that $T \neq 0$ and $T(f)=0$ for all $f \in F$.
\end{lemma}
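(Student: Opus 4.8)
The plan is to build the desired representation $T$ by passing to the quotient of $\mathscr{B}$ by the Fell subbundle $\mathscr{J}=\{J_y\}_{y\in G}$. First I would verify, using Lemma \ref{siokaweiodkzxdaw}, that the collection $\{J_y\}_{y\in G}$ is closed under the multiplication and involution of $\mathscr{B}$ in the sense required to make it a Banach subbundle which is an ideal: $B_yJ_z\subset J_{yz}$, $J_zB_y\subset J_{zy}$, and $J_y^{\ast}\subset J_{y^{-1}}$ are exactly the conditions needed. One should also check that each $J_y$ is a closed subspace of $B_y$ (it is a closure by definition) and that the bundle structure restricts continuously; this is the sort of point I would dispatch by citing the general theory of Fell subbundles and quotient bundles, e.g.\ the relevant sections of \cite{MR936629}.

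Next I would form the quotient Fell bundle $\mathscr{B}/\mathscr{J}=\{B_y/J_y\}_{y\in G}$, with fibrewise quotient norms, quotient multiplication, and quotient involution; the ideal conditions above guarantee these are well defined, and the $C^{\ast}$-axiom on the quotient follows from the fact that each $B_e/J_e$ is a $C^*$-algebra quotient (noting $J_e$ is a closed two-sided $\ast$-ideal of $B_e$ by Lemma \ref{siokaweiodkzxdaw} with $y=e$) together with the positivity inherited from $\mathscr{B}$. The key point is that $\mathscr{B}/\mathscr{J}$ is a genuine Fell bundle over $G$ whose unit fibre $B_e/J_e$ is nonzero precisely because of the hypothesis $J_e\neq B_e$. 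Since a nonzero $C^{\ast}$-algebra has a nonzero nondegenerate $\ast$-representation, and since any Fell bundle admits a nondegenerate $\ast$-representation whose integrated form is nonzero on the unit fibre (indeed the regular representation, or any representation obtained from a state of $C^{\ast}(\mathscr{B}/\mathscr{J})$ lifted appropriately), we obtain a nonzero nondegenerate $\ast$-representation $\bar T$ of $\mathscr{B}/\mathscr{J}$.

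Finally, I would let $q\colon\mathscr{B}\to\mathscr{B}/\mathscr{J}$ be the quotient bundle morphism and set $T=\bar T\circ q$. This is a nondegenerate $\ast$-representation of $\mathscr{B}$, and it is nonzero because $\bar T$ is. For the vanishing on $F$: for $f\in F$ and $x\in G$ we have $f(x)\in J_x$ by definition, so $q(f(x))=0$, hence $q\circ f$ is the zero cross-section of $\mathscr{B}/\mathscr{J}$; integrating, the integrated form $\widetilde T(f)=\widetilde{\bar T}(q\circ f)=0$. Thus $T(f)=0$ for all $f\in F$, as required.

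The main obstacle I anticipate is not the representation-theoretic step but the bundle-theoretic bookkeeping: one must check that $\mathscr{J}$ is an \emph{upper} (or lower, depending on convention) semicontinuous Banach subbundle of $\mathscr{B}$ so that the quotient $\mathscr{B}/\mathscr{J}$ inherits a continuous (Fell) bundle structure, rather than merely a fibrewise algebraic one; in particular one needs enough continuous cross-sections of $\mathscr{J}$ to apply the standard quotient construction. If direct verification of continuity is awkward, an alternative route is to avoid quotient bundles entirely: observe that $F$ is a proper $\ast$-ideal of $\mathscr{L}(\mathscr{B})$ (properness because $J_e\neq B_e$ forces, via an approximate-unit argument as in Lemma \ref{dsiodiojcewqs}, that some element of $\mathscr{L}(\mathscr{B})$ is not a limit of elements of $F$), hence is contained in a maximal proper closed ideal of $C^{\ast}(\mathscr{B})$, and pull back an irreducible representation of the corresponding nonzero quotient $C^*$-algebra; one then argues that this representation is the integrated form of a $\ast$-representation of $\mathscr{B}$ and is nonzero. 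I would present the quotient-bundle argument as the main line and mention the ideal-theoretic argument as a remark if the continuity check proves delicate.
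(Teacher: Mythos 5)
Your proposal is correct and follows essentially the same route as the paper: form the quotient bundle $\{B_x/J_x\}_{x\in G}$ (the paper topologizes it via \cite[II.13.18]{MR936628} and gets continuity of the quotient map from \cite[II.13.3]{MR936628}), note it is nonzero because $J_e\neq B_e$, and pull back a faithful nondegenerate representation of the quotient. The bundle-theoretic bookkeeping you flag is exactly what the paper dispatches by those citations, so no further work is needed.
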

\begin{proof}
By \cite[II.13.18]{MR936628} we can define a topology on the disjoint union of $B_x/J_x$ making $\{B_x / J_x\}_{x \in G}$ a Fell bundle over $G$ such that $x \mapsto f(x)+J_x$ is a continuous cross-section for each $f \in \mathscr{L}(\mathscr{B})$. We denote this Fell bundle by $\mathscr{B}/J$; since $J_e \neq B_e$, hence $\mathscr{B}/J$ is non-zero. By the aid of \cite[II.13.3]{MR936628} it is easy to see that the map $\digamma: \mathscr{B} \to \mathscr{B}/J$ defined by $a \mapsto a+J_{\pi(a)}$ is continuous. Let $S$ be a faithful $\ast$-representation of $\mathscr{B}/J$; then we take $T=S \circ \digamma$. $T$ is a $\ast$-representation of $\mathscr{B}$ satisfying $T(f)=0$ for all $f \in F$; and since $\mathscr{B}/J$ is non-zero, hence $S \neq 0$ and so $T \neq 0$.
\end{proof}

\begin{lemma}
If $\mathscr{B}$ is saturated, then any non-degenerate $\ast$-representation of $\mathscr{D}$ is induced from a non-degenerate $\ast$-representation of $\mathscr{D}_H$.
\end{lemma}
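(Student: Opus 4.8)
The plan is to recast the statement through the Fell--Doran dictionary of Construction \ref{weijodsiodwioqewxsaddwf}, which identifies a non-degenerate $\ast$-representation of $\mathscr{D}$ with a system of imprimitivity $\langle T,P\rangle$ for $\mathscr{B}$ over $G/H$, and then to exploit the imprimitivity bimodule $\langle \mathscr{L}(\mathscr{B}),{}_{\mathscr{L}(\mathscr{D})}[\ ,\ ],[\ ,\ ]_{\mathscr{L}(\mathscr{B}_H)}\rangle$ of Construction \ref{eoisdoisdfiocxoierer}. Recall from that construction that, for every non-degenerate $\ast$-representation $S$ of $\mathscr{B}_H$, inducing $S$ \emph{up} through this bimodule yields exactly $\langle {\rm{Ind}}_{\mathscr{B}_H\uparrow\mathscr{B}}(S),P\rangle$; and by Theorem \ref{erwiosdiojedxsqwde} such an $S$ is automatically $\mathscr{B}$-positive, so no positivity hypothesis has to be carried along. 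Thus the real content is that every non-degenerate $\ast$-representation of $\mathscr{D}$ is of the form $\langle {\rm{Ind}}_{\mathscr{B}_H\uparrow\mathscr{B}}(S),P\rangle$.

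The first step is to see that saturation forces the left range of the bimodule to be dense. Put $F_{\mathscr{D}}=\{{}_{\mathscr{L}(\mathscr{D})}[f,g]:f,g\in\mathscr{L}(\mathscr{B})\}$ and, for $x\in G$, let $J_x$ be the closure of $\{\Phi(x):\Phi\in\lceil F_{\mathscr{D}}\rfloor\}$. Evaluating the formula ${}_{\mathscr{L}(\mathscr{D})}[f,g](x,yH)=\int_H f(yh)g^{\ast}(h^{-1}y^{-1}x)dvh$ at $x=e$ and running an approximate unit of $G/H$ around $yH$, I expect to identify the value of $J_e$ at $yH$ with $\lceil B_{yH}B_{yH}^{\ast}\rfloor$. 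Since $\mathscr{B}$ is saturated it is in particular $H$-saturated, so $\lceil B_{yH}B_{yH}^{\ast}\rfloor=B_e$ for every $y$; hence $J_e=D_e$. Applying Lemma \ref{siokaweiodkzxdaw} to the Fell bundle $\mathscr{D}$ with the $\ast$-ideal $\lceil F_{\mathscr{D}}\rfloor$ then propagates this: $D_x=\lceil D_xD_e\rfloor\subset\lceil D_xJ_e\rfloor\subset J_x$, so $J_x=D_x$ for all $x$, and therefore $\lceil F_{\mathscr{D}}\rfloor$ is dense in $\mathscr{L}(\mathscr{D})$ in the inductive-limit topology, hence dense in $C^{\ast}(\mathscr{D})$. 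In particular no non-zero $\ast$-representation of $\mathscr{D}$ annihilates $\lceil F_{\mathscr{D}}\rfloor$.

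The second step is the double-induction argument, modelled on the proof of Lemma \ref{esrijodzjodfiojcde}. Given a non-degenerate $\ast$-representation $T$ of $\mathscr{D}$, I would induce it \emph{down} through the left-rigged structure $\langle\mathscr{L}(\mathscr{B}),{}_{\mathscr{L}(\mathscr{D})}[\ ,\ ]\rangle$ to a $\ast$-representation $S$ of $\mathscr{L}(\mathscr{B}_H)$; the positivity and boundedness needed here are part of the imprimitivity-bimodule axioms of Construction \ref{eoisdoisdfiocxoierer}. Because $T$ is non-degenerate and acts non-trivially on the dense ideal $\lceil F_{\mathscr{D}}\rfloor$, a routine density computation shows $S$ is non-degenerate (and non-zero). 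Since $S$ arises from a $\ast$-representation of the Fell bundle $\mathscr{B}_H$, Theorem \ref{erwiosdiojedxsqwde} gives its $\mathscr{B}$-positivity, so ${\rm{Ind}}_{\mathscr{B}_H\uparrow\mathscr{B}}(S)$ exists, and by Construction \ref{eoisdoisdfiocxoierer} inducing $S$ back up through the bimodule recovers $\langle{\rm{Ind}}_{\mathscr{B}_H\uparrow\mathscr{B}}(S),P\rangle$; tracing the canonical maps of the two inductions, using the density of $\lceil F_{\mathscr{D}}\rfloor$ together with the non-degeneracy of $T$, shows this composite is unitarily equivalent to $T$. Finally, pulling $S$ back along the $\ast$-bundle-homomorphism $\mathscr{D}_H\to\mathscr{B}_H$, $\psi\mapsto\psi(eH)$, yields a non-degenerate $\ast$-representation $S^{\natural}$ of $\mathscr{D}_H$; comparing the Hilbert-bundle model of ${\rm{Ind}}_{\mathscr{D}_H\uparrow\mathscr{D}}(S^{\natural})$ (Construction \ref{seriodiofdsiofdsoierw}) fibrewise with that of ${\rm{Ind}}_{\mathscr{B}_H\uparrow\mathscr{B}}(S)$, and invoking (\ref{sdisdicewewqdsa}), gives ${\rm{Ind}}_{\mathscr{D}_H\uparrow\mathscr{D}}(S^{\natural})\simeq\langle{\rm{Ind}}_{\mathscr{B}_H\uparrow\mathscr{B}}(S),P\rangle\simeq T$, which is the assertion.

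I expect the main obstacle to be the recovery in Step two: verifying that inducing $T$ down and then back up genuinely returns $T$ and not merely a subrepresentation, i.e. that $S$ stays non-degenerate and that the canonical unitary extends to all of $X(T)$ — this is exactly the point at which the density of $\lceil F_{\mathscr{D}}\rfloor$ from Step one is indispensable, and it is the reason saturation is needed. The verification that $J_e=D_e$ (the $\mathscr{C}_0(G/H,\cdot)$-valued approximate-unit manipulation) and the fibrewise comparison of Hilbert bundles in the last step are bookkeeping rather than conceptual, though somewhat lengthy.
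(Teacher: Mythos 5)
Your argument is essentially correct but follows a genuinely different, and much longer, route than the paper. The paper's proof is a three-line reduction: it observes that the projection-valued measure $P$ which comes for free with any non-degenerate representation $\langle T,P\rangle$ of $\mathscr{D}$ already makes $\langle\langle T,P\rangle,P\rangle$ a system of imprimitivity for $\mathscr{D}$ over $G/H$ (i.e.\ a representation of the iterated transformation bundle $\mathscr{D}'$), notes that $\mathscr{D}$ inherits saturation from $\mathscr{B}$, and then applies Fell--Doran's imprimitivity theorem \cite[XI.14.18]{MR936629} to the saturated bundle $\mathscr{D}$ to conclude directly that $\langle T,P\rangle$ is induced from a representation of $\mathscr{D}_H$. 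You instead prove, in effect, the ``if'' direction of Theorem \ref{weisdklsdfldsfpoepor} (density of $\lceil F_{\mathscr{D}}\rfloor$ plus double induction through the $\mathscr{L}(\mathscr{D})$--$\mathscr{L}(\mathscr{B}_H)$ bimodule) and then convert ``induced from $\mathscr{B}_H$'' into ``induced from $\mathscr{D}_H$''. This is viable, but the last conversion, ${\rm{Ind}}_{\mathscr{D}_H\uparrow\mathscr{D}}(S^{\natural})\simeq\langle{\rm{Ind}}_{\mathscr{B}_H\uparrow\mathscr{B}}(S),P\rangle$, is exactly the converse of Lemma \ref{weiodkjscwesasesde}, which the paper treats as a separate lemma with a real proof; it does hold (the fibre $\sum_{y\in\alpha}D_y\otimes X$ collapses onto $\sum_{y\in\alpha}B_y\otimes X$ because $(\phi^{\ast}\psi)(eH)=\phi(\alpha)^{\ast}\psi(\alpha)$ for $\phi,\psi$ over $\alpha$), but it carries the actual content of the statement and is more than bookkeeping. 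What each approach buys: yours makes the Morita-equivalence mechanism explicit and nearly self-contained; the paper's exploits the already-available saturated imprimitivity theorem and is an order of magnitude shorter.
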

\begin{proof}
Let $\langle T, P \rangle$ be a non-degenerate $\ast$-representation of $\mathscr{D}$. Let $\mathscr{D}'$ be the $G, G/H$ transformation bundle derived from $\mathscr{D}$. Then $\langle \langle T, P \rangle, P \rangle$ is a system of imprimitivity for $\mathscr{D}'$ over $G/H$. Since $\mathscr{B}$ is saturated, hence $\mathscr{D}$ is saturated; by \cite[XI.14.18]{MR936629} $\langle T, P \rangle$ is induced from a $\ast$-representation of $\mathscr{D}_H$. Our proof is complete.
\end{proof}

\begin{lemma}\label{weiodkjscwesasesde}
Let $\langle T,P \rangle$ be a non-degenerate $\ast$-representation of $\mathscr{D}$. If $\langle T, P \rangle$ is induced from a $\ast$-representation of $\mathscr{D}_H$, then it is induced from a $\ast$-representation of $\mathscr{B}_H$ (see Definition \ref{adkjldsalidwxdw}).
\end{lemma}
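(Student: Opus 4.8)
The plan is to realise $\langle T,P\rangle$ on the $L^2$-sections of a Hilbert bundle over $G/H$, to recognise it there as an \emph{integrated representation} of $\mathscr{B}$ in the sense of Definition~\ref{sdiosdoisdopiwep9dkopsx}, and then to apply Lemma~\ref{dsiodfwiosdoierwc}. First I would record that, reading off Construction~\ref{weijodsiodwioqewxsaddwf}, the restricted bundle $\mathscr{D}_H$ is canonically the $H,G/H$ transformation bundle derived from $\mathscr{B}_H$ (the fibre over $h\in H$ is $\mathscr{C}_0(G/H,B_h)$, with the same multiplication and involution), and that $H$ fixes the base coset $eH$. So a $\ast$-representation $\sigma$ of $\mathscr{D}_H$ with $\langle T,P\rangle\simeq{\rm{Ind}}_{\mathscr{D}_H\uparrow\mathscr{D}}(\sigma)$ (which exists by hypothesis) produces, via Construction~\ref{seriodiofdsiofdsoierw} applied to the pair $\mathscr{D}_H\uparrow\mathscr{D}$, a realisation of $\langle T,P\rangle$ on $\mathscr{L}_2(G/H;\rho^{\sharp};\mathscr{Z})$, where $\mathscr{Z}=\{Z_\alpha\}_{\alpha\in G/H}$ is the induced Hilbert bundle, $Z_\alpha$ being the completion of $\sum^{\oplus}_{y\in\alpha}(D_y\otimes X(\sigma))$. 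Under the identification of $\ast$-representations of $\mathscr{D}$ with systems of imprimitivity (\cite[\S VIII.18]{MR936629}) one checks that in this picture $P$ is multiplication by the characteristic functions on $G/H$, while for $b\in B_x$ the operator $T_b$ carries $Z_\alpha$ into $Z_{x\alpha}$ through a fibre map $T'_b$; continuity of $b\mapsto T'_b(\xi)$ follows from continuity of the $\mathscr{D}$-action and the topology of $\mathscr{Z}$. Hence conditions (i) and (iii) of Definition~\ref{sdiosdoisdopiwep9dkopsx} hold.

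The crux is condition (ii) of Definition~\ref{sdiosdoisdopiwep9dkopsx}: for each $x\in G$ the closed linear span of $\{T'_a(\xi):a\in B_{xH},\ \xi\in Z_H\}$ must be all of $Z_{xH}$. To obtain this one unwinds the construction of the induced bundle: $Z_{xH}$ is the closure of the image of $\sum^{\oplus}_{y\in xH}(D_y\otimes X(\sigma))$, $Z_H$ is the closure of the image of $\sum^{\oplus}_{h\in H}(D_h\otimes X(\sigma))$, and $T'_a$ applied to the class of $d\otimes\xi\in D_h\otimes X(\sigma)$ is the class of $(a\cdot d)\otimes\xi$, where $a\cdot d\in D_{\pi(a)h}$ is the product of $a$, viewed as a multiplier of $\mathscr{D}$, with $d$. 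The assertion is then that the null-space identifications built into the completions $Z_{xH}$ collapse $\sum^{\oplus}_{y\in xH}(D_y\otimes X(\sigma))$ precisely onto the span of these vectors --- equivalently, that the induced Hilbert bundle $\mathscr{Z}$ is already generated, fibrewise, from its fibre over $eH$ by the multiplier action of $\mathscr{B}$. This is the step I expect to be the main obstacle, and it is exactly where the hypothesis that $\langle T,P\rangle$ is \emph{induced from} $\mathscr{D}_H$ (and not merely an arbitrary system of imprimitivity for $\mathscr{B}$ over $G/H$) is used: it plays here the role that the saturation hypothesis plays in the classical imprimitivity theorem \cite[XI.14.18]{MR936629}. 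Specialising (ii) to $x=e$ also shows that $S:={}^{Z_{H}}(T'|B_H)$ is non-degenerate.

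Granting conditions (i)--(iii), $T$ is an integrated representation of $\mathscr{B}$ on $\mathscr{Z}$ with derivative $T'$, so Lemma~\ref{dsiodfwiosdoierwc} applies: $S={}^{Z_{H}}(T'|B_H)$ is a $\mathscr{B}$-positive (and, by the previous paragraph, non-degenerate) $\ast$-representation of $\mathscr{B}_H$, and the unitary $\Psi\colon\mathscr{Y}\to\mathscr{Z}$ of that lemma --- where $\mathscr{Y}$ is the Hilbert bundle over $G/H$ induced from $S$ --- implements $T\simeq{\rm{Ind}}_{\mathscr{B}_H\uparrow\mathscr{B}}(S)$. Since $\Psi$ is a Hilbert-bundle isomorphism over the identity map of $G/H$, it intertwines multiplication by characteristic functions on the two spaces of $L^2$-sections; and, by Construction~\ref{eoisdoisdfiocxoierer}, that multiplication projection-valued measure on $\mathscr{L}_2(G/H;\rho^{\sharp};\mathscr{Y})$ is precisely the projection-valued measure $P$ attached there to ${\rm{Ind}}_{\mathscr{B}_H\uparrow\mathscr{B}}(S)$. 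Therefore $\langle T,P\rangle\simeq\langle{\rm{Ind}}_{\mathscr{B}_H\uparrow\mathscr{B}}(S),P\rangle$, i.e.\ $\langle T,P\rangle$ is induced from the $\ast$-representation $S$ of $\mathscr{B}_H$ in the sense of Definition~\ref{adkjldsalidwxdw}, which is the claim.
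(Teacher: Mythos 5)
Your overall route --- realise $\langle T,P\rangle$ on $\mathscr{L}_2(\mathscr{Z})$ via the induction from $\mathscr{D}_H$, verify that $T$ is an integrated representation of $\mathscr{B}$ in the sense of Definition~\ref{sdiosdoisdopiwep9dkopsx}, and invoke Lemma~\ref{dsiodfwiosdoierwc} --- is quite different from the paper's, which never exhibits a unitary at all: the paper applies the Fell--Doran criterion \cite[XI.14.17]{MR936629} (a system of imprimitivity is induced from the restricted bundle iff its restriction to the ideal $F_{\mathscr{D}}$ of inner products is non-degenerate) to $\langle\langle T,P\rangle,Q\rangle$, factors the $\mathscr{L}(\mathscr{D}')$-valued inner product as $Q(rs^{\ast})\,\langle T,P\rangle(_{\mathscr{L}(\mathscr{D})}[f,g])$, and deduces non-degeneracy of $\langle T,P\rangle|F_{\mathscr{D}}$. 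Unfortunately your version has a genuine gap at its last step. On $\mathscr{L}_2(\mathscr{Z})$ there are \emph{two} commuting projection-valued measures: the measure $Q$ coming from the induction $\mathscr{D}_H\uparrow\mathscr{D}$, which really is multiplication by characteristic functions of the base $G/H$; and the measure $P$ belonging to the representation of $\mathscr{D}$, which acts \emph{inside} each fibre $Z_\alpha$ by multiplying the $\mathscr{C}_0(G/H,B_y)$-component of $d\otimes\xi$ pointwise, $d\mapsto\chi_W\cdot d$. Your assertion that ``in this picture $P$ is multiplication by the characteristic functions'' identifies $P$ with $Q$, and this is false in general. For a concrete failure take $G=\mathbb{Z}/2=\{e,g\}$, $H=\{e\}$, $B_x=\mathbb{C}$, and induce the character ``evaluation at $g$'' of $D_e=\mathscr{C}(G)$: then $Z_x\cong\mathbb{C}$ via $d\mapsto d(xg)$, so $P(W)|Z_x=\chi_W(xg)$ while $Q(W)|Z_x=\chi_W(x)$. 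Since the unitary $\Psi$ of Lemma~\ref{dsiodfwiosdoierwc} lies over the identity of $G/H$, your argument proves that $\langle T,Q\rangle$ is induced from a representation of $\mathscr{B}_H$, not the given pair $\langle T,P\rangle$; repairing this requires an extra intertwiner relating $P$ and $Q$ (or an argument that avoids them altogether, as the paper's does).

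A secondary issue: you explicitly flag condition (ii) of Definition~\ref{sdiosdoisdopiwep9dkopsx} as ``the main obstacle'' but do not prove it, and your diagnosis of where the hypothesis enters is off. The fibrewise density you need, namely that $\lceil\{a\cdot d: a\in B_{xH},\, d\in D_H\}\rfloor\supset D_y$ for each $y\in xH$, follows from $\lceil B_yB_e\rfloor=B_y$ together with the Stone--Weierstrass-type result \cite[II.14.6]{MR936628} applied to $\mathscr{C}_0(G/H,B_y)$; no saturation-like input and no use of the ``induced from $\mathscr{D}_H$'' hypothesis is needed at that point (that hypothesis is what lets you set up the bundle picture in the first place). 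This step is therefore fillable, but as written it is a placeholder rather than a proof.
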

\begin{proof}
Since $\langle T, P \rangle$ is induced from a $\ast$-representation of $\mathscr{D}_H$, hence there is a projection valued measure $Q$ on $G/H$ such that $\langle \langle T, P \rangle, Q \rangle$ is a system of imprimitivity for $\mathscr{D}$ over $G/H$. Let $\mathscr{D}'$ be the $G, G/H$ transformation bundle derived from $\mathscr{D}$. By \cite[XI.14.17]{MR936629} $\langle \langle T, S \rangle, Q \rangle|F_{\mathscr{D}'}$ (see Construction \ref{eoisdoisdfiocxoierer}) is non-degenerate; hence  the restriction of $\langle \langle T, P \rangle, Q \rangle$ on the linear span of $\{_{\mathscr{L}(\mathscr{D'})}[fr, gs] : f, g \in \mathscr{L}(\mathscr{B}), \ r, s \in \mathscr{C}_0(G/H)\}$ is non-degenerate because this set is dense in $F_{\mathscr{D}'}$ in the inductive limit topology. But we have
\begin{equation}\begin{split}
_{\mathscr{L}(\mathscr{D}')}[fr, gs] (x,yH)&=\int_G fr(yh)(gs)^{\ast}(h^{-1}y^{-1}x)dvh 
\\&=\int_G f(yh)g^{\ast}(h^{-1}y^{-1}x)dvh \ r(yH)s^{\ast}(yH)
\end{split}\end{equation}
$(x,y \in G)$; then by (\ref{sdisdicewewqdsa})
\begin{equation*}
\langle \langle T, S \rangle, Q \rangle(_{\mathscr{L}(\mathscr{D}')}[fr, gs])=Q(rs^{\ast}) \langle T, S \rangle(_{\mathscr{L}(\mathscr{D})}[f,g]),
\end{equation*}
we conclude that $\langle T, S \rangle|F_{\mathscr{D}}$ is non-degenerate. By \cite[XI.14.17]{MR936629}, $\langle T, S \rangle$ is induced from a $\ast$-representation of $\mathscr{B}_H$.
\end{proof}

\begin{construction}
\rm  For each $x \in G$ we let $E_x=\mathscr{C}_0(G/H, D_x)$ (the space of continuous functions from $G/H$ into $D_x$ vanishing at the infinity point). Let $E$ be the disjoint union of $\{E_x\}_{x \in G}$. By \cite[II.13.18]{MR936628} there is a unique topology making $E$ a Banach bundle over $G$ such that for each $\mathfrak{f} \in \mathscr{L}(\mathcal{L})$ and $r \in \mathscr{L}_0(G/H)$ the cross-section $\mathfrak{f}r$ defined by
\begin{equation*}
\mathfrak{f}r(x)(m)=\mathfrak{f}(x)r(m) \ \ \ \ (x \in G; m \in G/H)
\end{equation*}
is continuous. We denote this Banach bundle by $\mathscr{E}$.

For each pair of $\mathfrak{u} \in E_x, \mathfrak{v} \in E_y$, we define $_{\mathscr{D}^{(2)}}[ \ ,\ ]: \mathscr{E} \times \mathscr{E} \to \mathscr{D}^{(2)}$ and $[ \ , \ ]_{\mathscr{D}^{(1)}}: \mathscr{E} \times \mathscr{E} \to \mathscr{D}^{(1)}$ by
\begin{equation*}
_{\mathscr{D}^{(2)}}[\mathfrak{u}, \mathfrak{v}](m)=_{\mathscr{B}^{(2)}}[\mathfrak{u}(m), \mathfrak{v}(yx^{-1}m)]; \ [\mathfrak{u}, \mathfrak{v}]_{\mathscr{D}^{(1)}}(m)=[\mathfrak{u}(xm), \mathfrak{v}(xm)]_{\mathscr{B}^{(1)}}
\end{equation*}
$(m \in G/H)$.

We define right action of $\mathscr{D}^{(2)}$ on $\mathscr{E}$ by
\begin{equation*}
(\psi\mathfrak{u})(m)=\psi(m)\mathfrak{u}(z^{-1}m) \ \ \ \ (m \in G/H)
\end{equation*}
$(\psi \in D_z^{(2)})$ and left action $\mathscr{D}^{(1)}$ on $\mathscr{E}$ by
\begin{equation*}
(\mathfrak{v}\phi)(m)=\mathfrak{v}(m)\phi( y^{-1}m) \ \ \ \ (m \in G/H)
\end{equation*}
$(\phi \in D_z)$. 

It is routing to verify that we have made $\mathscr{E}$ a $\mathscr{D}^{(2)}$-$\mathscr{D}^{(1)}$ equivalence bundle; and if $\mathscr{B}^{(1)}$ and $\mathscr{B}^{(2)}$ are strongly equivalent, then $\mathscr{D}^{(1)}$ and $\mathscr{D}^{(2)}$ are strongly equivalent implemented by $\mathscr{E}$.
\end{construction}

\begin{theorem}\label{weisdklsdfldsfpoepor}
$C^{\ast}(\mathscr{B}_H)$ and $C^{\ast}(\mathscr{D})$ are Morita equivalent implemented by the imprimitivity bimodule $\langle \mathscr{L}(\mathscr{B}), _{\mathscr{L}(\mathscr{D})}[\ , \ ], [ \ , \ ]_{\mathscr{L}(\mathscr{B}_H)}) \rangle$  if and only if $\mathscr{B}$ is $H$-saturated.
\end{theorem}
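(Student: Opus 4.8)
The plan is to prove both implications by exploiting the imprimitivity-bimodule $\langle \mathscr{L}(\mathscr{B}),\ _{\mathscr{L}(\mathscr{D})}[\ ,\ ],\ [\ ,\ ]_{\mathscr{L}(\mathscr{B}_H)}\rangle$ together with the criterion of Lemma \ref{esrijodzjodfiojcde}. Recall that for this pair to implement a Morita equivalence of the $C^{\ast}$-completions it suffices (Lemma \ref{esrijodzjodfiojcde}(i)) that every non-degenerate $\ast$-representation of $C^{\ast}(\mathscr{B}_H)$ is inducible to a non-zero representation of $C^{\ast}(\mathscr{D})$ and that every non-degenerate $\ast$-representation of $C^{\ast}(\mathscr{D})$ is induced from a representation of $C^{\ast}(\mathscr{B}_H)$. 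The inducibility of an arbitrary non-degenerate $\ast$-representation $S$ of $\mathscr{B}_H$ along this bimodule is exactly the content of Construction \ref{eoisdoisdfiocxoierer}: $\langle {\rm{Ind}}_{\mathscr{B}_H\uparrow\mathscr{B}}(S), P\rangle$ is the induced system of imprimitivity, and by Theorem \ref{erwiosdiojedxsqwde} every non-degenerate $S$ is $\mathscr{B}$-positive so ${\rm{Ind}}_{\mathscr{B}_H\uparrow\mathscr{B}}(S)$ is always defined and non-degenerate, hence non-zero. So the first half of Lemma \ref{esrijodzjodfiojcde}(i) holds unconditionally; the whole weight of the theorem rests on the second half, namely on \emph{when} every non-degenerate $\ast$-representation of $\mathscr{D}$ arises (up to unitary equivalence) as an induced system of imprimitivity $\langle {\rm{Ind}}_{\mathscr{B}_H\uparrow\mathscr{B}}(S),P\rangle$ in the sense of Definition \ref{adkjldsalidwxdw}.

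For the ``if'' direction, assume $\mathscr{B}$ is $H$-saturated. First I would upgrade this to the statement that $\mathscr{D}$, the $G,G/H$ transformation bundle, is saturated: since $D_x=\mathscr{C}_0(G/H,B_x)$ and the multiplication on $\mathscr{D}$ is fibrewise that of $\mathscr{B}$, the $H$-saturation hypothesis $B_e\subset\lceil B_{xH}B^{\ast}_{xH}\rfloor$ for all $x$ translates (using that $\mathscr{C}_0(G/H)$ is unital-ish via approximate units and that $B_e^{(2)}$-approximate-identity arguments as in Lemma \ref{dsiodiojcewqs} carry over) into $D_e\subset\lceil D_{xH}D_{xH}^{\ast}\rfloor$ — I would in fact check that $\mathscr{D}$ is honestly saturated in the sense $D_e=\lceil D_xD_x^{\ast}\rfloor$ for every $x$, which is what the two cited results require. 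Then the chain is already assembled in the excerpt: the Lemma stating that if $\mathscr{B}$ is saturated then every non-degenerate $\ast$-representation of $\mathscr{D}$ is induced from one of $\mathscr{D}_H$ (proved via the transformation bundle $\mathscr{D}'$ of $\mathscr{D}$ and \cite[XI.14.18]{MR936629}), followed by Lemma \ref{weiodkjscwesasesde} which promotes ``induced from $\mathscr{D}_H$'' to ``induced from $\mathscr{B}_H$''. Hence every non-degenerate $\ast$-representation of $\mathscr{D}$ is of the form $\langle{\rm{Ind}}_{\mathscr{B}_H\uparrow\mathscr{B}}(S),P\rangle$, which via Construction \ref{eoisdoisdfiocxoierer} is ${\rm{Ind}}(S;\langle\mathscr{L}(\mathscr{B}),\dots\rangle)$; together with the unconditional inducibility noted above, Lemma \ref{esrijodzjodfiojcde}(i) gives the Morita equivalence.

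For the ``only if'' direction I would argue contrapositively: suppose $\mathscr{B}$ is not $H$-saturated, so there is $x_0\in G$ with $B_e\neq\lceil B_{x_0H}B^{\ast}_{x_0H}\rfloor$. I want to produce a non-degenerate $\ast$-representation of $\mathscr{D}$ (equivalently, a system of imprimitivity $\langle T,P\rangle$ for $\mathscr{B}$ over $G/H$) that is \emph{not} weakly contained in / not equivalent to any induced representation, which by the imprimitivity-theorem machinery forces the bimodule $\langle\mathscr{L}(\mathscr{B}),\dots\rangle$ to fail condition (vii)/(vii'), i.e.\ the ideal it generates in $C^{\ast}(\mathscr{D})$ is proper, so the completions cannot be Morita equivalent. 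Concretely, I would let $F=\lceil F_{\mathscr{D}}\rfloor$ be the closed $\ast$-ideal of $\mathscr{L}(\mathscr{D})$ generated by $\{_{\mathscr{L}(\mathscr{D})}[f,g]:f,g\in\mathscr{L}(\mathscr{B})\}$; using the formula for $_{\mathscr{L}(\mathscr{D})}[f,g]$ in Construction \ref{eoisdoisdfiocxoierer} one computes $_{\mathscr{L}(\mathscr{D})}[f,g](e,yH)=\int_H f(yh)g^{\ast}(h^{-1}y^{-1})dvh$, whose fibres over $y=e$ land in $\lceil B_HB_H^{\ast}\rfloor$ and more relevantly, after translating, the $e$-fibre ideal $J_e$ of $F$ (in the sense of Lemma \ref{siokaweiodkzxdaw}) turns out to be exactly $\lceil\bigcup_x B_{xH}B_{xH}^{\ast}\rfloor$ up to the obvious manipulations. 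If $\mathscr{B}$ fails $H$-saturation then this $J_e$ is a proper ideal of $B_e$, so by Lemma \ref{eriodkadsdsacx} there is a non-zero non-degenerate $\ast$-representation $T$ of $\mathscr{D}$ vanishing on all of $F$; such a $T$ cannot be induced from $\mathscr{B}_H$ (an induced system is non-degenerate on $F_{\mathscr{D}}$ by \cite[XI.14.17]{MR936629}), so not every representation of $\mathscr{D}$ is induced, and the equivalence fails. The main obstacle I anticipate is precisely this last computation: identifying the $e$-fibre of the ideal generated by the $\mathscr{L}(\mathscr{D})$-valued inner products with the span of the $B_{xH}B_{xH}^{\ast}$ requires carefully unwinding the convolution-integral formulas over $H$ and over $G/H$, and dealing with the approximate-unit limits in the inductive-limit topology — everything else is an assembly of results already in the excerpt.
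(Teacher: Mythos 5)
Your ``only if'' direction is essentially the paper's argument (the paper compresses it to one sentence citing Lemma \ref{eriodkadsdsacx} and Construction \ref{eoisdoisdfiocxoierer}; your fibrewise computation of the $e$-fibre ideal is the right way to fill it in, modulo the slip that $J_e$ should be the fibrewise object $\{\phi\in D_e:\phi(yH)\in\lceil B_{yH}B_{yH}^{\ast}\rfloor \text{ for all } y\}$ rather than a single span over all $x$ --- this does not affect the conclusion). The genuine gap is in the ``if'' direction. You propose to show that $H$-saturation of $\mathscr{B}$ forces $\mathscr{D}$ to be honestly saturated and then to quote the saturated imprimitivity theorem directly. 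That implication is false: for $\psi,\phi\in D_x$ one has $(\psi\phi^{\ast})(m)=\psi(m)\phi(m)^{\ast}$, so $\lceil D_xD_x^{\ast}\rfloor=\mathscr{C}_0(G/H,\lceil B_xB_x^{\ast}\rfloor)$ and $\mathscr{D}$ is saturated if and only if $\mathscr{B}$ itself is saturated. $H$-saturation is strictly weaker (for instance every Fell bundle is $G$-saturated), so you cannot reach the hypotheses of the lemma you want to cite, and the ``if'' direction does not go through as written.

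The paper's route for this direction is genuinely different and is where the Bridge Lemmas earn their keep: one picks a \emph{saturated} Fell bundle $\mathscr{B}^{(1)}$ equivalent to $\mathscr{B}=\mathscr{B}^{(2)}$, passes to the transformation bundles $\mathscr{D}^{(1)}$ and $\mathscr{D}^{(2)}=\mathscr{D}$ together with the equivalence bundle $\mathscr{E}$ between them, uses $H$-saturation of $\mathscr{D}$ only to conclude via Lemma \ref{ewioiowekjdsoiwerdcs} that $\mathscr{E}$ is $\mathscr{D}$-full over $G/H$, and then applies Theorem \ref{weioweiordeddsaf} to transport the statement ``every non-degenerate representation is induced from the restriction to $H$'' from the saturated bundle $\mathscr{D}^{(1)}$ (where it is Fell--Doran's theorem) over to $\mathscr{D}$; Lemma \ref{weiodkjscwesasesde} then converts ``induced from $\mathscr{D}_H$'' into ``induced from $\mathscr{B}_H$''. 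To repair your proof you would need to replace the false saturation claim with this detour or an equivalent one; the rest of your assembly, including the unconditional inducibility of representations of $\mathscr{B}_H$ via Theorem \ref{erwiosdiojedxsqwde} and the reduction to Lemma \ref{esrijodzjodfiojcde}, matches the paper.
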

\begin{proof}
Suppose $\mathscr{B}$ is $H$-saturated. We denote $\mathscr{B}$ by $\mathscr{B}^{(2)}$; and let $\mathscr{B}^{(1)}$ be a saturated Fell bundle over $G$ weakly equivalent to $\mathscr{B}^{(1)}$ implemented by a bundle $\mathcal{L}$.  Let $\mathscr{E}$ be the bundle implementing the weak equivalence of $\mathscr{D}^{(1)}$ and  $\mathscr{D}^{(2)}=\mathscr{D}$. By \cite[XI.14.17]{MR936629} and Theorem \ref{weioweiordeddsaf} each $\ast$-represntation of $\mathscr{B}_H$ is inducible to a non-degenerate $\ast$-representation of $\mathscr{D}$; hence, in order to prove that $\mathscr{B}_H$ and $\mathscr{D}$ are Morita equivalent, by Lemma \ref{esrijodzjodfiojcde} it is sufficient to prove that any non-degenerate $\ast$-representation of $\mathscr{D}$ is induced from a non-degenerate $\ast$-representation of $\mathscr{B}_H$.

Let $R$ be a non-degenerate $\ast$-representation of $\mathscr{D}^{(2)}$. Since $\mathscr{D}^{(2)}$ is $H$-saturated, hence by Lemma \ref{ewioiowekjdsoiwerdcs} $\mathcal{D}$ is $\mathscr{D}^{(2)}$-full for $G/H$. By Theorem \ref{weioweiordeddsaf} $R$ is induced from a non-degenerate $\ast$-representation of $\mathscr{D}^{(2)}_H$; and by Lemma \ref{weiodkjscwesasesde} $R$ is induced from a $\ast$-representation of $\mathscr{B}^{(2)}_H$. The `if' part is proved.

Conversely, let us suppose that $\mathscr{B}$ is not $H$-saturated. Then by Lemma \ref{eriodkadsdsacx} and Construction \ref{eoisdoisdfiocxoierer}, the linear span of $\{_{\mathscr{L}(\mathscr{D})}[f,g]: f, \ g \in \mathscr{L}(\mathscr{B})\}$ is not dense in $C^{\ast}(\mathscr{D})$, so $\mathscr{B}_H$ and $\mathscr{D}$ cannot be Morita equivalent implemented by $\mathscr{L}(\mathscr{B})$.
\end{proof}

\begin{theorem}\label{ewridskjodsklcxwqweqwe}
The following conditions are equivalent:

$(i)$ $\mathscr{B}$ is $H$-saturated;

$(ii)$ $\mathscr{B}_H$ and $\mathscr{D}$ are Morita equivalent implemented by $\mathscr{L}(\mathscr{B})$;

$(iii)$ $\mathscr{B}_H$ and $\mathscr{B}_{xHx^{-1}}$ are Morita equivalent implemented by $\mathscr{L}(\mathscr{B}_{xH})$ for all $x \in G$.
\end{theorem}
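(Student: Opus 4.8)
The plan is to run the cycle (i) $\Rightarrow$ (ii) $\Rightarrow$ (iii) $\Rightarrow$ (i). The implication (i) $\Rightarrow$ (ii) -- in fact the full equivalence (i) $\Leftrightarrow$ (ii) -- is nothing but Theorem \ref{weisdklsdfldsfpoepor}, so the entire content lies in tying (iii) to the other two; I would do this as (iii) $\Rightarrow$ (i) and (i) $\Rightarrow$ (iii). For (iii) $\Rightarrow$ (i) I would argue by contraposition using the ideal/fibre dictionary of Lemmas \ref{siokaweiodkzxdaw} and \ref{eriodkadsdsacx}. Assume $\mathscr{B}$ is not $H$-saturated, so there is an $x$ with $B_e \not\subset \lceil B_{xH}B_{xH}^{\ast}\rfloor$, i.e. the closed linear span $I$ of $\{bc^{\ast} : b,c \in B_y,\ y \in xH\}$ is a proper ideal of $B_e$. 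A short computation with ${}_{\mathscr{L}(\mathscr{B}_{xHx^{-1}})}[\ ,\ ]$ shows that the closed linear span $F$ of $\{{}_{\mathscr{L}(\mathscr{B}_{xHx^{-1}})}[\mathfrak{f},\mathfrak{g}] : \mathfrak{f},\mathfrak{g} \in \mathscr{L}(\mathscr{B}_{xH})\}$ is a $\ast$-ideal of $C^{\ast}(\mathscr{B}_{xHx^{-1}})$ (in the sense of Lemma \ref{siokaweiodkzxdaw}) whose $e$-fibre is exactly $I \subsetneq B_e$; by Lemma \ref{eriodkadsdsacx} there is a nonzero $\ast$-representation of $\mathscr{B}_{xHx^{-1}}$ annihilating $F$, so the density condition (vii) of Definition \ref{sdkjlzdklczoe} fails for the bimodule $\mathscr{L}(\mathscr{B}_{xH})$, and $\mathscr{B}_H$ and $\mathscr{B}_{xHx^{-1}}$ cannot be Morita equivalent implemented by it.

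For (i) $\Rightarrow$ (iii) I would use the conjugation construction together with Lemma \ref{esrijodzjodfiojcde}. Recall that conjugation by $x$ is, up to unitary equivalence, the operation $S \mapsto {}^{x}S = {\rm{Ind}}(S; \langle \mathscr{L}(\mathscr{B}_{xH}), {}_{\mathscr{L}(\mathscr{B}_{xHx^{-1}})}[\ ,\ ], [\ ,\ ]_{\mathscr{L}(\mathscr{B}_H)}\rangle)$, and that $S \mapsto {}^{x}S$ is invertible with inverse $T \mapsto {}^{x^{-1}}T$. Hence in Lemma \ref{esrijodzjodfiojcde}, applied with $A = \mathscr{L}(\mathscr{B}_{xHx^{-1}})$, $B = \mathscr{L}(\mathscr{B}_H)$ and $L = \mathscr{L}(\mathscr{B}_{xH})$, the ``every representation is induced'' parts are automatic, and the only thing to check is that neither direction of induction via $L$ sends a nonzero non-degenerate representation to the zero representation. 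Inspecting the operator inner product of Construction \ref{serijcdkjdiucjifdipier}, the space $\mathscr{L}(\mathscr{B}_{xH})\otimes_S X(S)$ vanishes precisely when $S$ annihilates the ideal of $C^{\ast}(\mathscr{B}_H)$ whose $e$-fibre is $\lceil\{b^{\ast}b : b \in B_y,\ y \in xH\}\rfloor$, and symmetrically the other direction vanishes precisely when a representation of $\mathscr{B}_{xHx^{-1}}$ annihilates the ideal with $e$-fibre $\lceil\{bb^{\ast} : b \in B_y,\ y \in xH\}\rfloor$. A non-degenerate representation cannot annihilate an ideal whose $e$-fibre is all of $B_e$, so it suffices to see that $H$-saturation forces both of these $e$-fibres to be $B_e$ for every $x$.

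The real obstacle is exactly this last reduction, and it is a coset-symmetry point rather than any representation-theoretic bookkeeping. The condition ``$\lceil\{bb^{\ast} : b \in B_y,\ y \in xH\}\rfloor = B_e$ for all $x$'' is literally the hypothesis $H$-saturation. But $\lceil\{b^{\ast}b : b \in B_y,\ y \in xH\}\rfloor$ equals $\lceil\{bb^{\ast} : b \in B_y,\ y \in Hx^{-1}\}\rfloor$, i.e. the same span taken over the \emph{right} coset $Hx^{-1}$, so what is needed in addition is its ``source-side'' companion, over all right cosets. One must therefore prove that in any Fell bundle the left-coset and right-coset versions of ``the range ideals $\overline{B_yB_y^{\ast}}$ over the coset span $B_e$'' are equivalent. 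This is not purely formal -- for a single group element the range ideal $\overline{B_yB_y^{\ast}}$ and the source ideal $\overline{B_y^{\ast}B_y}$ genuinely differ -- but the compatibility of the bundle multiplication with the Hilbert-$B_e$-bimodule structure of the fibres (concretely, $\overline{B_gB_h}\subseteq B_{gh}$ forces inclusions between the associated ideals of $B_e$) is what makes the two global span conditions coincide. Alternatively one can sidestep the computation by the strong-equivalence device of Theorem \ref{erwiosdiojedxsqwde}: prove (iii) first for a saturated model, where it is available from the classical imprimitivity machinery, and carry the equivalences along $\mathcal{L}$ restricted to $H$ and to $xHx^{-1}$ -- at the price of checking that the restricted bundles remain full and that the composite bimodule is indeed $\mathscr{L}(\mathscr{B}_{xH})$. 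Either route, the fullness/saturation bookkeeping of the restricted data is where I expect the actual difficulty to sit.
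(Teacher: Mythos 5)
Your architecture coincides with the paper's: you quote Theorem \ref{weisdklsdfldsfpoepor} for $(i)\Leftrightarrow(ii)$; your $(iii)\Rightarrow(i)$ is the contrapositive via Lemmas \ref{siokaweiodkzxdaw} and \ref{eriodkadsdsacx}, which is exactly what the paper indicates (and your version is more explicit about the $e$-fibre computation); and your $(i)\Rightarrow(iii)$ runs through Lemma \ref{esrijodzjodfiojcde} and the conjugation bimodule, as does the paper's. The problem is that your $(i)\Rightarrow(iii)$ is not actually a proof. You correctly reduce it to the claim that $H$-saturation, i.e.\ $\lceil \bigcup_{y\in xH}B_yB_y^{\ast}\rfloor=B_e$ for every \emph{left} coset, forces the companion condition $\lceil \bigcup_{y\in xH}B_y^{\ast}B_y\rfloor=B_e$, which is the same range-ideal condition taken over every \emph{right} coset; but you then assert that the compatibility of multiplication with the bimodule structure of the fibres ``is what makes the two global span conditions coincide'' without any argument. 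That assertion carries the entire weight of the implication, and I do not believe it is true.

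Here is a candidate counterexample. Let $G=F_2=\langle a,b\rangle$, $H=\langle a\rangle$, and let $\mathscr{B}$ be the semidirect-product bundle of the partial action of $G$ on the discrete set $\Omega=\{\text{reduced words not beginning with }a^{\pm1}\}$ obtained by restricting left translation of $G$ on itself, so $\Omega_g=\Omega\cap g\Omega$, $\overline{B_gB_g^{\ast}}=C_0(\Omega_g)$ and $\overline{B_g^{\ast}B_g}=C_0(\Omega_{g^{-1}})$. Since $H\Omega=G$, we get $\bigcup_{h\in H}\Omega_{xh}=\Omega\cap xH\Omega=\Omega$ for every $x$, so $\mathscr{B}$ is $H$-saturated. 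But for $x=ab$ the point $e\in\Omega$ lies in no $\Omega_{(xa^n)^{-1}}=\Omega\cap a^{-n}b^{-1}a^{-1}\Omega$, because $aba^{n}$ is reduced and begins with $a$ for every $n$; hence $\lceil\bigcup_{y\in xH}B_y^{\ast}B_y\rfloor$ is a proper, $\mathscr{B}_H$-invariant ideal of $B_e$, there is a nonzero non-degenerate $S$ of $\mathscr{B}_H$ with $Y_{xH}=0$, and the density condition $(vii)$ of Definition \ref{sdkjlzdklczoe} fails for $\mathscr{L}(\mathscr{B}_{xH})$. So the left- and right-coset conditions genuinely differ, and $(i)\Rightarrow(iii)$ cannot go through without symmetrizing the definition of $H$-saturation (or restricting to normal $H$, or to saturated-type hypotheses). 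To be fair, the paper's own proof buries exactly this point in the phrase ``it is routine to verify that $Y_{xH}$ is a non-zero Hilbert space,'' so you have put your finger on the genuine soft spot of the argument --- but flagging the obstacle is not the same as overcoming it, and as written your proposal leaves $(i)\Rightarrow(iii)$ unproved.
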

\begin{proof}
$(i) \Leftrightarrow (ii)$ is proved in Proposition \ref{weisdklsdfldsfpoepor}. We just need to prove $(i) \Leftrightarrow (iii)$.  

$(i) \Rightarrow (iii)$: By Lemma \ref{esrijodzjodfiojcde}, it is sufficient to prove that if $S$ is a non-degenerate $\ast$-representation of $\mathscr{B}_H$, then ${\rm{Ind}}(S; \mathscr{L}(\mathscr{B}_{xH}))$ exists and is non-zero. But Theorem \ref{erwiosdiojedxsqwde} and (\ref{wejisdkjsdfiojdsiwerdsxs}) ${\rm{Ind}}(S; \mathscr{L}(\mathscr{B}_{xH}))$ exists as a representation of $\mathscr{B}_{xHx^{-1}}$ on the Hilbert space $Y_{xH}$; since $\mathscr{B}$ is $H$-saturated, it is routine to verify that $Y_{xH}$ is a non-zero Hilbert space, hence ${\rm{Ind}}(S; \mathscr{L}(\mathscr{B}_{xH}))$  is nonzero.

$(iii) \Rightarrow (i):$ This may be proved by the same argument of the proof of $(ii) \Rightarrow (i)$ by the aid of Lemma \ref{eriodkadsdsacx}.

\end{proof}

\end{document}